\renewcommand{\thefootnote}{\fnsymbol{footnote}}
\long\def\sfootnote[#1]#2{\begingroup
\def\thefootnote{\fnsymbol{footnote}}\footnote[#1]{#2}\endgroup}
\newtheorem{theorem}{Theorem}
\newtheorem{definition}[theorem]{Definition}
\newtheorem{notation}[theorem]{Notation}
\newtheorem{convention}[theorem]{Convention}
\newtheorem{lemma}[theorem]{Lemma}
\newtheorem{corollary}[theorem]{Corollary}
\newtheorem{example}[theorem]{Example}
\newtheorem{conjecture}[theorem]{Conjecture}
\newtheorem{question}[theorem]{Question}
\newtheorem{remark}[theorem]{Remark}
\newenvironment{proof}{\noindent\mbox{\bf Proof.}}
{\hfill\mbox{$\Subset\!\!\!\!\Supset$}\bigskip}
\begin{document}
\pagestyle{fancy}
\lhead[page \thepage \ (of \pageref{LastPage})]{}
\chead[{\bf Herbrand Consistency  of  Some Arithmetical Theories}]{{\bf Herbrand Consistency  of  Some Arithmetical Theories}}
\rhead[]{page \thepage \ (of \pageref{LastPage})}
\lfoot[\copyright\ {\sf Saeed Salehi 2010}]{$\varoint^{\Sigma\alpha\epsilon\epsilon\partial}_{\Sigma\alpha\ell\epsilon\hslash\imath}\centerdot${\footnotesize {\rm ir}}}
\cfoot[{\footnotesize {\tt http:\!/\!/saeedsalehi.ir/}}]{{\footnotesize {\tt http:\!/\!/saeedsalehi.ir/}}}
\rfoot[$\varoint^{\Sigma\alpha\epsilon\epsilon\partial}_{\Sigma\alpha\ell\epsilon\hslash\imath}\centerdot${\footnotesize {\rm ir}}]{\copyright\ {\sf Saeed Salehi 2010}}
\renewcommand{\headrulewidth}{1pt}
\renewcommand{\footrulewidth}{1pt}
\thispagestyle{empty}

\begin{center}
\begin{table}
\begin{tabular}{| c | l  || l | c |}
\hline
 \multirow{7}{*}{\includegraphics[scale=0.75]{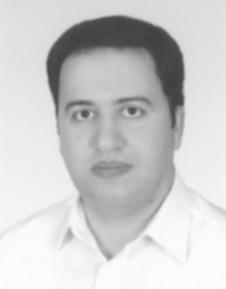}}&    &  &
 \multirow{7}{*}{ \ \ ${\huge \varoint^{\Sigma\alpha\epsilon\epsilon\partial}_{\Sigma\alpha\ell\epsilon\hslash\imath}\centerdot}${{\rm ir}}} \ \ \ \ \\
 &     \ \ {\large{\sc Saeed Salehi}}  \ \  \ & \ \    Tel: \, +98 (0)411 339 2905  \ \  &  \\
 &   \ \ Department of Mathematics \ \  \ & \ \ Fax: \ +98 (0)411 334 2102 \ \  & \\
 &   \ \ University of Tabriz \ \ \  & \ \ E-mail: \!\!{\tt /root}{\sf @}{\tt SaeedSalehi.ir/} \ \  &  \\
 &  \ \ P.O.Box 51666--17766 \ \ \ &   \ \ \ \ {\tt /SalehiPour}{\sf @}{\tt TabrizU.ac.ir/} \ \  &  \\
 &   \ \ Tabriz, Iran \ \ \ & \ \ Web: \  \ {\tt http:\!/\!/SaeedSalehi.ir/} \ \ &  \\
 &    &  &  \\
 \hline
\end{tabular}
\end{table}
\end{center}


\vspace{2em}

\begin{center}
{\bf {\Large  Herbrand Consistency  of  Some Arithmetical Theories
}}
\end{center}

\vspace{2em}
\begin{abstract}
G\"odel's second incompleteness theorem is proved for Herbrand consistency of some arithmetical theories
with bounded induction, by using a technique of logarithmic shrinking the witnesses of bounded formulas, due to Z. Adamowicz [Herbrand consistency and bounded arithmetic,  \textit{Fundamenta Mathematicae} 171 (2002) 279--292]. In that paper, it was shown that one cannot always shrink the witness of a bounded formula logarithmically, but in the presence of Herbrand consistency, for theories ${\rm I\Delta_0+\Omega_m}$ with $m\geqslant 2$, any witness for any bounded formula can be shortened logarithmically. This immediately implies the unprovability of Herbrand consistency of a theory $T\supseteq {\rm I\Delta_0+\Omega_2}$ in $T$ itself.

In this paper, the above results are generalized for ${\rm I\Delta_0+\Omega_1}$. Also after tailoring the definition of Herbrand consistency for ${\rm I\Delta_0}$ we prove the corresponding theorems for ${\rm I\Delta_0}$. Thus the Herbrand version of G\"odel's second incompleteness theorem follows  for the theories ${\rm I\Delta_0+\Omega_1}$ and ${\rm I\Delta_0}$.

\bigskip

\centerline{${\backsim\!\backsim\!\backsim\!\backsim\!\backsim\!\backsim\!\backsim\!
\backsim\!\backsim\!\backsim\!\backsim\!\backsim\!\backsim\!\backsim\!
\backsim\!\backsim\!\backsim\!\backsim\!\backsim\!\backsim\!\backsim\!
\backsim\!\backsim\!\backsim\!\backsim\!\backsim\!\backsim\!\backsim\!
\backsim\!\backsim\!\backsim\!\backsim\!\backsim\!\backsim\!\backsim\!
\backsim\!\backsim\!\backsim\!\backsim\!\backsim\!\backsim\!\backsim\!
\backsim\!\backsim\!\backsim\!\backsim\!\backsim\!\backsim\!\backsim\!
\backsim\!\backsim\!\backsim\!\backsim\!\backsim\!\backsim\!\backsim\!
\backsim\!\backsim\!\backsim\!\backsim\!\backsim\!\backsim\!\backsim\!
\backsim\!\backsim\!\backsim\!\backsim\!\backsim\!\backsim\!\backsim\!
\backsim\!\backsim\!\backsim}$}

\bigskip

\noindent {\bf 2010 Mathematics Subject Classification}: Primary 03F40, 03F30; Secondary 03F05,  03H15.

\noindent {\bf Keywords}: Cut-Free Provability; Herbrand Provability; Bounded Arithmetics; Weak Arithmetics; G\"odel's Second Incompleteness Theorem.

\end{abstract}


\bigskip

\vfill

\begin{center}
\begin{tabular}{| c |}
\hline
  \\
 {\sc Saeed Salehi}, {\bf Herbrand Consistency  of  Some Arithmetical Theories},  \textmd{Manuscript 2010}.  \\
\\
 {\large {\tt http:\!/\!/saeedsalehi.ir/ 
  }}
     \qquad  \qquad
   {\sf Status}: 
 {\sc MANUSCRIPT ({Submitted})} 
  \\
  \\
   \hline
\end{tabular}
\end{center}
\vspace{-1em}
\hspace{.75em} \textsl{\footnotesize Date: 08 June 2010}

\vfill

\bigskip

\centerline{page 1 (of \pageref{LastPage})}


\newpage
\setcounter{page}{2}
\SetWatermarkAngle{55}
\SetWatermarkLightness{0.9}
\SetWatermarkFontSize{50cm}
\SetWatermarkScale{2.25}
\SetWatermarkText{\!\!{\sc MANUSCRIPT (Submitted)}}

\section{Introduction}
By G\"odel's first incompleteness theorem ${\sf Truth}$ is not the same as ${\sf Provability}$ in sufficiently strong theories. In other words,  ${\sf Provable}$ is a proper subset of ${\sf True}$, and thus ${\sf True}$ is not conservative over ${\sf Provable}$. It is not even $\Pi_1-$conservative; i.e., there exists a $\Pi_1-$formula, in theories which can interpret enough arithmetic, which is true but unprovable in those theories. Thus one way of comparing the strength of a theory $T$ over one of its sub-theories $S$ is considering the $\Pi_1-$conservativeness of $T$ over $S$. And G\"odel's second incompleteness theorem provides such a $\Pi_1-$candidate: ${\rm Con}(S)$, the statement of the consistency of $S$. By that theorem $S\not\vdash {\rm Con}(S)$, but if $T\vdash {\rm Con}(S)$ then $T$ is not $\Pi_1-$conservative over $S$.

Examples abound in mathematics and logic: Zermelo-Frankel Set Theory ${\rm ZFC}$ is not $\Pi_1-$conservative over Peano's Arithmetic ${\rm PA}$, because ${\rm ZFC}\vdash {\rm Con}({\rm PA})$ but
${\rm PA}\not\vdash {\rm Con}({\rm PA})$. Inside ${\rm PA}$ the $\Sigma_n-$hierarchy is not a $\Pi_1-$conservative hierarchy, since ${\rm I\Sigma_{n+1}}\vdash{\rm Con}({\rm I\Sigma_n})$ though ${\rm I\Sigma_{n}}\not\vdash{\rm Con}({\rm I\Sigma_n})$; see e.g. \cite{HP98}. Then below the  theory ${\rm I\Sigma_1}$ things get more complicated: for $\Pi_1-$separating ${\rm I\Delta_0}+{\rm Exp}$ over ${\rm I\Delta_0}$ the candidate ${\rm Con}({\rm I\Delta_0})$ does not work, because ${\rm I\Delta_0}+{\rm Exp}\not\vdash{\rm Con}({\rm I\Delta_0})$. For this $\Pi_1-$separation, Paris and Wilkie \cite{PaWi81} suggested the notion of cut-free consistency instead of usual - Hilbert style - consistency predicate. Here one can show that ${\rm I\Delta_0}+{\rm Exp}\vdash{\rm CFCon}({\rm I\Delta_0})$, and then it was presumed that ${\rm I\Delta_0}\not\vdash {\rm CFCon}({\rm I\Delta_0})$, where ${\rm CFCon}$ stands for cut-free consistency. But this presumption took a rather long time to be established. Meanwhile, Pudl\'ak in \cite{Pud85} established the $\Pi_1-$separation of ${\rm I\Delta_0}+{\rm Exp}$ over ${\rm I\Delta_0}$  by other methods, and mentioned the unprovability of ${\rm CFCon}({\rm I\Delta_0})$ in ${\rm I\Delta_0}$ as an open problem.  This problem is interesting in its own right.  Indeed G\"odel's second incompleteness theorem has been generalized to all consistent theories containing Robinson's Arithmetic ${\rm Q}$, in the case of Hilbert consistency; see \cite{HP98}. But for cut-free consistency it is still an open problem whether the theorem holds for ${\rm Q}$, and its not too strong extensions. This is a double strengthening  of G\"odel's second incompleteness theorem: weakening the theory and weakening the consistency predicate. Let us note that since cut-free provability is stronger than usual Hilbert provability (with a super-exponential cost), then cut free consistency is  a weaker notion of consistency. Indeed, proving G\"odel's second incompleteness theorem for weak notions of consistencies in weak arithmetics turns out to be a difficult problem. We do not intend here to give a thorough history of this ongoing research area, let us just mention a few  results:

\noindent $\bullet \ $ Z. Adamowicz was the first one to demonstrate  the unprovability of cut free consistency in bounded arithmetics, by  proving in an unpublished manuscript in 1999 (later appeared as a technical report \cite{Ada96}) that the tableau consistency of ${\rm I\Delta_0+\Omega_1}$ is not provable in itself. Later  with P. Zbierski (2001) she proved G\"odel's second incompleteness theorem for Herbrand consistency of ${\rm I\Delta_0}+{\rm \Omega_2}$ (see \cite{Ada01}), and a bit later she gave a model theoretic proof of it in 2002; see \cite{Ada02}.

\noindent $\bullet \ $ D. E. Willard  introduced an ${\rm I\Delta_0}-$provable $\Pi_1-$formula $V$ and showed that any theory whose axioms contains $Q+V$ cannot prove its own tableaux consistency. He also showed that tableaux consistency of ${\rm I\Delta_0}$ is not provable in itself, see \cite{Wil02,Wil07}; this proved the conjecture of Paris and  Wilkie mentioned above.

\noindent $\bullet \ $ S. Salehi (see \cite{Sal02} Chapter 3 and also \cite{Sal01}) showed  the unprovability of Herbrand consistency of a re-axiomatization of ${\rm I\Delta_0}$ in itself, the proof of which was heavily based on \cite{Ada01}. The re-axiomatization used ${\rm PA^-}$, the theory of the positive fragment of a discretely ordered ring, as the base theory, instead of ${\rm Q}$, and assumed two ${\rm I\Delta_0}-$derivable sentences as axioms. Also the model-theoretic proof of Z. Adamowicz in \cite{Ada02} was generalized to the ${\rm I\Delta_0+\Omega_1}$ case in Chapter 5 of \cite{Sal02}. A polished and updated proof of it appears in the present paper.

\noindent $\bullet  \ $  L. A. Ko{\l}odziejczyk showed in \cite{Kol06} that the notion of Herbrand consistency cannot $\Pi_1-$separate the hierarchy of bounded arithmetics (this $\Pi_1-$separation is still an open problem). Main results are the existence of an $n$ for any given $m\geqslant 3$ such that $S_m\not\vdash{\rm HCon}(S_m^n)$, and the existence of a natrual $n$ such that $\bigcup_mS_m\not\vdash{\rm HCon}(S_3^n)$, where ${\rm HCon}$ stands for Herbrand consistency.

\noindent $\bullet  \ $  Z. Adamowicz and K. Zdanowski have obtained some results on the unprovability of the relativized notion of Herbrand consistency in theories containing ${\rm I\Delta_0+\Omega_1}$; see \cite{AZ07}. Their paper contains some insightful ideas about the notion of Herbrand consistency.

For ${\rm I\Delta_0+\Omega_1}$ the arguments are rather smoother, in comparison  to the case of ${\rm I\Delta_0}$. Our proof for the main theorem on ${\rm I\Delta_0+\Omega_1}$ borrows many ideas from \cite{Ada02}, the major difference being the coding techniques and making use of a more liberal definition of Herbrand consistency. The definition of ${\rm HCon}$ given in \cite{Ada01} and  \cite{Ada02} depends on a special coding given there.
For reading the present paper no familiarity with \cite{Ada01} is needed, but a theorem of \cite{Ada02} will be of critical use here (Theorem \ref{th2}). We will even use a modified version of it (Theorem \ref{ij1}).  For ${\rm I\Delta_0}$ we will see that our definition of ${\rm HCon}$ is not best suited for this theory; and we will actually tailor it for ${\rm I\Delta_0}$. A hint for the obstacles in tackling Herbrand consistency in ${\rm I\Delta_0}$ can be found in Chapters 3 and 4 of \cite{Sal02}.

In Section 2 we introduce the ingredients of Herbrand's theorem from the scratch, and then explain how they can be arithmetized by G\"odel coding. This sets the stage for Section 3 in which we formalize the notion of Herbrand model and use it to prove our main theorem for ${\rm I\Delta_0+\Omega_1}$. Finally in Section 4 we modify our definitions and theorems to fit the ${\rm I\Delta_0}$ case. After pinpointing the places where we have made an essential use of ${\rm \Omega_1}$, we do some tailoring for ${\rm I\Delta_0}$, and prove our main result for ${\rm I\Delta_0}$. We  finish the paper with some conclusions and  some open questions.


\section{Basic Definitions and Arithmetizations}
This section introduces the notions of Herbrand provability and Herbran consistency, and a way of formalizing and arithmetizing these concepts. The first subsection can be read by any logician. The second subsection gets more technical with G\"odel coding, for which some familiarity with \cite{HP98} is presumed.

\subsection{Herbrand Consistency}
Skolemizing a formula is usually performed on prenex normal forms (see e.g. \cite{Buss95}), and since prenex normalizing a formula is not necessarily done in a unique way, then one may get different Skolemized forms of a formula. For example, the tautology $F\!=\!\forall x \phi(x)\rightarrow \forall x \phi(x)$ can be prenex normalized into either $\forall x\exists y (\phi(y)\rightarrow\phi(x))$ or $\exists y\forall x (\phi(y)\rightarrow\phi(x))$. These two formulas can be Skolemized respectively as $\phi({\mathfrak f}(x))\rightarrow\phi(x)$ and $\phi({\mathfrak c})\rightarrow\phi(x)$, where ${\mathfrak f}$ is a new unary function symbol, and ${\mathfrak c}$ is a new constant symbol.
Here we briefly describe a way of Skolemizing a (not-necessarily prenex normal) formula which results in a somehow unique  (up to a variable renaming) formula.

A formula is in {\em negation normal} form when the implication symbol does not appear in it, and the negation symbol appears in front of atomic formulas only. A formula can be (uniquely) negation normalized  by the following rewriting rules:
\begin{align*}
(A\rightarrow B) & \Longmapsto (\neg A\vee B)  & \neg \neg A & \Longmapsto A \\
\neg (A\vee B) & \Longmapsto (\neg A\wedge\neg B) & \neg (A\wedge B) & \Longmapsto  (\neg A\vee\neg B)\\
\neg \forall x A(x)  & \Longmapsto \exists x \neg A(x) & \neg \exists x A(x) & \Longmapsto \forall x \neg A(x)
\end{align*}
A formula is called {\em rectified} if no variables appears both bound and free in it,
and  different quantifiers refer to different variables. A formula is called {\em rectified negation normal} if it is both negation normalized and rectified. Again, any formula can be rectified. Indeed, any given formula is equivalent to its rectified negation normal form (RNNF) which can be obtained from the formula
in a unique (up to a variable renaming) way (see e.g. \cite{BBR07}).

 Now we introduce Skolem functions for existential formulas: for any
(not necessarily RNNF) 
 formula of the form $\exists x A(x)$, let   ${\mathfrak f}_{\exists x A(x)}$ be a
new $m-$ary function symbol where $m$ is the number of the free variables of
$\exists x A(x)$.  When $m=0$ then  ${\mathfrak f}_{\exists x A(x)}$ will obviously be a new constant symbol (cf. \cite{Buss95}).

 \begin{definition}{\rm
Let $\varphi$ be an RNNF formula. Define $\varphi^{\mathsf S}$   by induction:
\begin{itemize}
\item  $\varphi^{\mathsf S}=\varphi$ for atomic or negated-atomic $\varphi$;

\item $(\varphi\circ\psi)^{\mathsf S}=\varphi^{\mathsf S}\circ\psi^{\mathsf S}$
for $\circ\!\in\!\{\wedge,\vee\}$ and RNNF formulas $\varphi,\psi$;

\item $(\forall x\varphi)^{\mathsf S}=\forall x\varphi^{\mathsf S}$;

\item $(\exists x\varphi)^{\mathsf S}=\varphi^{\mathsf S}[{\mathfrak f}_{\exists x \varphi(x)}(\overline{y})/x]$ where $\overline{y}$ are the free
variables of $\exists x \varphi(x)$ and the formula
$\varphi^{\mathsf S}[{\mathfrak f}_{\exists x \varphi(x)}(\overline{y})/x]$
results from the formula $\varphi^{\mathsf S}$ by replacing all the occurrences of
the variable $x$ with the term ${\mathfrak f}_{\exists x \varphi(x)}(\overline{y})$.
\end{itemize}

\noindent The Skolemized form of any (not necessarily RNNF) formula $\psi$ is
obtained in the following way: using the above rewriting rules we negation normalize $\psi$ and then rename the
repetitive variables (if any) to get a rectified negation normal form of $\psi$, say $\varphi$.
Then we get  $\varphi^{\mathsf S}$ by the above definition, and remove all the
 (universal) quantifiers in it (together with the variables next to them).
We denote thus resulted Skolemized form of $\psi$ by $\psi^{\rm Sk}$.\hfill $\subset\!\!\!\!\supset$}
\end{definition}

Note that $\psi^{\rm Sk}$ can be obtained from $\psi$ in a unique
(up to a variable renaming) way, and it is an open (quantifier-less) formula. For the above example $F$, assuming that $\phi$ is atomic, we get
\newline\centerline{$F^{\mathsf S}=(\exists x\neg\phi(x)\vee\forall x\phi(x))^{\mathsf S}=\neg\phi({\mathfrak c})\vee\forall x\phi(x),$} and thus $F^{\rm Sk}=\neg\phi({\mathfrak c})\vee\phi(x)\equiv \phi({\mathfrak c})\rightarrow\phi(x).$

\begin{definition}{\rm
An Skolem instance of the formula $\psi$ is any formula resulted
from substituting the free variables of $\psi^{\rm Sk}$ with some
terms. So, if $x_1,\ldots,x_n$ are the free variables of
$\psi^{\rm Sk}$ (thus written as $\psi^{\rm
Sk}(x_1,\ldots,x_n)$) then an Skolem instance of $\psi$ is
$\psi^{\rm Sk}[t_1/x_1,\cdots,t_n/x_n]$ where $t_1,\ldots,t_n$
are terms (which could be constructed from the Skolem functions
symbols).

\noindent  Skolemized form of a theory
$T$ is by defintion $T^{\rm Sk}=\{\varphi^{\rm Sk}\mid\varphi\!\in\! T\}$. \hfill
$\subset\!\!\!\!\supset$}\end{definition}

We are now ready to state an important theorem
discovered by  Herbrand  (probably by also Skolem and G\"odel). This
theorem has got some few names, and by now is a classical theorem in
Mathematical Logic. Here we state a version of the theorem which we will need in the paper.
The proof is omitted,
though it is not too difficult to prove it directly (see e.g. \cite{BBR07}).

\begin{theorem}[Herbrand]{
Any theory $T$ is
equiconsistent with its Skolemized theory $T^{\rm Sk}$. In other
words, $T$ is consistent if and only if every finite set of Skolem
instances of $T$ is (propositionally) satisfiable.\hfill
$\subset\!\!\!\!\supset$
}\end{theorem}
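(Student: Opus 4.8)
\medskip\noindent\textbf{Proof proposal.} The plan is to break the statement into three successive equivalences: $T$ is consistent if and only if the open theory $T^{\rm Sk}$ is consistent; $T^{\rm Sk}$ is consistent if and only if the set $I$ of all \emph{ground} Skolem instances of $T$ (formed over the Herbrand universe, after adjoining a fresh constant to the language if it has none) is consistent; and $I$ is consistent if and only if every finite set of Skolem instances of $T$ is propositionally satisfiable. The first equivalence is model-theoretic and is the only place the axiom of choice is used; the second is the Herbrand term-model construction; the third is propositional compactness together with a routine substitution argument.

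For the first equivalence I would work throughout with the RNNF $\varphi$ of each axiom, which is harmless since a formula is equivalent to its RNNF. One direction is easy: if $N \models T^{\rm Sk}$ then, reading the open members of $T^{\rm Sk}$ as their universal closures, $N$ satisfies each $\varphi^{\mathsf S}$ under every assignment, and a straightforward induction on the RNNF formula $\varphi$ shows that $N \models \varphi^{\mathsf S}(\overline a)$ forces $N \models \varphi(\overline a)$ in the reduct to the original language (at an existential step the value of the Skolem term is a genuine witness), so that reduct models $T$. For the converse, given $M \models T$, I would expand $M$ to $M^{*}$ by interpreting, for each existential subformula $\exists x\,A(x)$ of an axiom in RNNF, the symbol ${\mathfrak f}_{\exists x A(x)}$ as a function that, on any tuple $\overline b$ naming the free variables of $\exists x\,A(x)$, returns some $c$ with $M \models A(\overline b,c)$ when such a $c$ exists, and an arbitrary value otherwise. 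Here the axiom of choice enters, and rectification of the formulas is what makes the indexing of the Skolem symbols unambiguous and the substitution of Skolem terms capture-free. An induction on $\varphi$ then gives $M \models \varphi(\overline a)$ if and only if $M^{*} \models \varphi^{\mathsf S}(\overline a)$ for every RNNF $\varphi$ and every assignment $\overline a$ — the existential case being precisely where the built-in witnessing property is invoked — so $M^{*}$ satisfies $\varphi^{\mathsf S}$ for every axiom, i.e.\ $M^{*} \models T^{\rm Sk}$.

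For the second and third equivalences: a model of $T^{\rm Sk}$ satisfies each $\varphi^{\rm Sk}$ universally, hence every substitution instance, in particular every ground one, so it is a model of $I$; conversely, if $I$ is consistent I would build the Herbrand model, whose domain is the set of ground terms with the syntactic interpretation of function symbols and whose ground atomic formulas get their truth values from any model of $I$ (quotienting by the induced congruence when equality is present). Since each $\varphi^{\rm Sk}$ is quantifier-free and every element of this structure is named by a ground term, the structure satisfies $\forall\overline z\,\varphi^{\rm Sk}$, so it models $T^{\rm Sk}$. Finally, viewing the ground atomic formulas as propositional variables turns $I$ into a set of propositional formulas, whence $I$ has a model if and only if $I$ is propositionally satisfiable, which by propositional compactness holds if and only if every finite subset of $I$ is; and ``every finite set of ground Skolem instances is propositionally satisfiable'' is equivalent to ``every finite set of arbitrary Skolem instances is propositionally satisfiable'' because ground instances are instances, and, in the other direction, substituting a fixed ground term for the finitely many remaining variables of a given finite set of instances carries a propositional satisfying assignment back across the substitution, the Boolean connectives commuting with substitution on atoms. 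Chaining the three equivalences yields the theorem.

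I expect the first equivalence to be the delicate point — specifically, formulating the inductive claim $M \models \varphi \iff M^{*} \models \varphi^{\mathsf S}$ strongly enough to pass through the disjunction and nested-existential cases, and verifying that the definition of the Skolem interpretations is legitimate. This is exactly where rectification of the formulas and the appeal to the axiom of choice do their work; the remaining two equivalences are the familiar Herbrand-model and propositional-compactness arguments and present no real difficulty.
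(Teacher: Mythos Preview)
Your proposal is correct and follows the standard decomposition of Herbrand's theorem into Skolemization (model-theoretic, using choice), the Herbrand term-model construction, and propositional compactness; the caveats you flag about rectification and the inductive claim $M\models\varphi \iff M^{*}\models\varphi^{\mathsf S}$ are the right places to be careful. There is nothing to compare against here: the paper omits the proof entirely, stating only that ``it is not too difficult to prove it directly'' and referring the reader to \cite{BBR07}, so your write-up in fact supplies considerably more than the paper does.
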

%
%
%
We will use the above theorem, which reduces the consistency of a  first-order theory
to the satisfiability of a propositional theory, for the definition of Herbrand
Consistency: a theory $T$ is Herbrand consistent when every finite set of
Skolem instances of $T$ is propositionally satisfiable. One other
concept is needed for formalizing Herbrand consistency of
arithmetical theories: {\em evaluation}.
\begin{convention}{\rm
Throughout the paper we deal with closed (or ground)
terms (i.e., terms with no variable) and for simplicity we call them
``term". For this to make sense, we may assume that the language of
the theory under consideration has at least one constant symbol.
\hfill $\subset\!\!\!\!\supset$
}\end{convention}
\begin{definition}\label{defeval}{\rm An {\em evaluation} is a function whose domain is the set of all
atomic formulas constructed from a given set of terms $\Lambda$ and
its  range is the set $\{0,1\}$ such that

 (i) \ $p \, [t\!=\!t]=1$ for all $t\!\in\!\Lambda$; and for any terms
$t,s\!\in\!\Lambda$,

 (ii) \  if $p\, [t\!=\!s]=1$ then $p\, [\varphi(t)]=p\,
[\varphi(s)]$ for any atomic formula $\varphi(x)$.

\noindent The relation $\backsim_p$ on $\Lambda$ is defined by
$t\backsim_p s \iff p[t=s]=1$ for $t,s\!\in\!\Lambda$. \hfill $\subset\!\!\!\!\supset$}
\end{definition}
\begin{lemma}
The  relation
$\backsim_p$  defined above is an equivalence relation.
\end{lemma}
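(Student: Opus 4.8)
The plan is to verify the three defining properties of an equivalence relation for $\backsim_p$ directly from clauses (i) and (ii) of Definition \ref{defeval}; the only thing requiring a moment's thought is choosing, in each case, the right atomic formula $\varphi(x)$ to plug into (ii). Reflexivity is immediate: for every $t\in\Lambda$ we have $p[t\!=\!t]=1$ by (i), so $t\backsim_p t$.

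For symmetry, I would assume $t\backsim_p s$, i.e.\ $p[t\!=\!s]=1$, and apply (ii) with the atomic formula $\varphi(x)$ taken to be $(x\!=\!t)$. This is a legitimate atomic formula over $\Lambda$ since $t\in\Lambda$, so (ii) yields $p[\varphi(t)]=p[\varphi(s)]$, that is $p[t\!=\!t]=p[s\!=\!t]$; combining with $p[t\!=\!t]=1$ from (i) gives $p[s\!=\!t]=1$, i.e.\ $s\backsim_p t$.

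For transitivity, assume $t\backsim_p s$ and $s\backsim_p r$, i.e.\ $p[t\!=\!s]=1$ and $p[s\!=\!r]=1$. Now apply (ii) to the second equality with $\varphi(x)$ taken to be $(t\!=\!x)$, again an atomic formula over $\Lambda$ because $t\in\Lambda$. From $p[s\!=\!r]=1$ we obtain $p[\varphi(s)]=p[\varphi(r)]$, that is $p[t\!=\!s]=p[t\!=\!r]$, and since $p[t\!=\!s]=1$ we conclude $p[t\!=\!r]=1$, i.e.\ $t\backsim_p r$.

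There is no genuine obstacle here: the argument is entirely routine. The single point that deserves (minimal) care is that the auxiliary atomic formulas $(x\!=\!t)$ and $(t\!=\!x)$ used above, once instantiated, land in the domain of the evaluation $p$ — which is exactly the set of atomic formulas built from $\Lambda$ — and this is guaranteed by the standing assumption that $t,s,r\in\Lambda$.
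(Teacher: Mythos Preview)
Your proof is correct and follows essentially the same approach as the paper's: reflexivity from (i), and symmetry and transitivity by instantiating the atomic formula $\varphi(x)$ in (ii) appropriately. The only cosmetic difference is that for symmetry the paper chooses $\varphi(x)\equiv(s\!=\!x)$ rather than your $(x\!=\!t)$, but either choice works.
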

\begin{proof}
For $\varphi(x) \equiv (s\!=\!x)$ from $p\,[t\!=\!s]=1$ one can
infer $p\,[s\!=\!t]=p\,[\varphi(t)]=p\,[\varphi(s)]=p\,[s\!=\!s]=1$.
So, $t\backsim_ps$ implies $s\backsim_pt$. Also for $\phi(x)\equiv
(t\!=\!x)$ the condition $p\,[s\!=\!r]=1$ implies
$p\,[t\!=\!s]=p\,[\phi(s)]=p\,[\phi(r)]=p\,[t\!=\!r]$. So, $\backsim_p$ is a symmetric and transitive (also, by definition, a reflexive) relation.
\end{proof}
\begin{notation}{\rm
The $\backsim_p\!\!-$class of a term
$t$ is denoted by $t/p$; and the set of all such $p-$classes for
each $t\!\in\!\Lambda$ is denoted by $\Lambda/p$.

For simplicity, we write $p\models\varphi$ instead of
$p\,[\varphi]=1$; thus $p\not\models\varphi$ stands for
$p\,[\varphi]=0$. This definition of {\em satisfying} can be
generalized to other open formulas in the usual way:
\begin{itemize}
\item $p\models\varphi\wedge\psi$ if and only if $p\models\varphi$ and
$p\models\psi$;
\item $p\models\varphi\vee\psi$ if and only if $p\models\varphi$ or
$p\models\psi$;
\item $p\models\neg\varphi$ if and only if $p\not\models\varphi$.
\hfill$\subset\!\!\!\!\supset$
\end{itemize}
}\end{notation}
Let us note that $\backsim_p$ is a congruence relation as well. That is, for any set of terms $t_i$ and $s_i$ ($i=1,\ldots,n$) and function symobl $f$, if $p\models t_1=s_1\wedge \cdots\wedge t_n=s_n$ then $p\models f(t_1,\ldots,t_n)=f(s_1,\ldots,s_n)$.
\begin{definition}{\rm  If all terms appearing in an Skolem instance of $\phi$
belong to the set $\Lambda$, that formula is called an  Skolem
instance of $\phi$ {\em available} in $\Lambda$.

\noindent An evaluation defined on $\Lambda$ is called a {\em
$\phi-$evaluation} if it satisfies all the Skolem instances of
$\phi$ which are available in $\Lambda$.

\noindent Similarly, for a theory $T$,  a {\em $T-$evaluation} on
$\Lambda$ is an evaluation on $\Lambda$ which satisfies every Skolem
instance of every formula of $T$ which is available in
$\Lambda$.\hfill $\subset\!\!\!\!\supset$}
\end{definition}

For illustrating the above concepts we now present an example.

\begin{example}\label{example1}{\rm
Take the language ${\mathcal L}=\{g,P,R,S\}$ in which $g$ is a binary
function symbol, and $P$ is a binary predicate symbol, and $R,S$ are
unary predicate symbols. Let the theory $T$ be axiomatized by:

$T_1: \ \forall x\exists y P(x,y)$;

$T_2: \ \forall x \big (R(x)\vee S(gx)\big)$;

$T_3: \ \forall x,y \big(\neg P(x,y)\vee\neg S(x)\big)$.

\noindent Let us, for the sake of simplicity, denote
 ${\mathfrak f}_{\exists y P(x,y)}$ by ${\mathfrak f}$; then the Skolemized form of
the above theory is:

$T_1^{\rm Sk}: \  P(x,{\mathfrak f} x)$; \ \ \ $T_2^{\rm Sk}: \  R(x)\vee S(gx)$; \ \ \ $T_3^{\rm Sk}: \  \neg  P(x,y)\vee\neg S(x)$.

\noindent For a constant symbol $c$ let $\Lambda=\{c, gc, {\mathfrak f} c\}$. Then $P(c,{\mathfrak f} c)$ and $R(c)\vee S(gc)$ are Skolem
instances of $T$ (of $T_1$ and $T_2$) available in $\Lambda$, but
the Skolem instance $R(gc)\vee S(ggc)$ of $T_2$ is not available in
$\Lambda$. Let us note also that the Skolem instance $\neg P(gc,{\mathfrak f}
gc)\vee\neg S(gc)$ of $T_3$ is not available in $\Lambda$.

\noindent Let $q$ be an evaluation on $\Lambda$ whose set of true
atomic formulas is $\{P(c,{\mathfrak f} c), R(c)\}$. Then $q$ is a
$T-$evaluation. On the other hand the evaluation $r$  on $\Lambda$
whose set of true atomic formulas is $\{P(c,{\mathfrak f} c), R(c),
S(c)\}$, is not a $T-$evaluation, though it satisfies all the
Skolem instances of $T_1$ and $T_2$ which are available in
$\Lambda$. Note that $r$ does not satisfy the Skolem instance $\neg
P(c,{\mathfrak f} c)\vee\neg S(c)$ of $T_3$. \hfill $\subset\!\!\!\!\supset$
}\end{example}

By the above theorem of  Herbrand, a theory $T$
is consistent if and only if every finite set of its Skolem
instances is satisfiable, if and only if for every finite set of
terms $\Lambda$ there is a $T-$evaluation on $\Lambda$.  And for a
formula $\varphi$, $T\vdash\varphi$ if and only if there exists a
finite set of terms $\Lambda$ such that there is no
$(T+\neg\varphi)-$evaluation on $\Lambda$. We call this notion of
provability, {\em Herbrand Provability}; note that then {\em
Herbrand Consistency} of a theory $T$ means the existence of a
$T-$evaluation on any (finite) set of terms.

\begin{example}\label{example2}{\rm In the previous example, let $\varphi=\forall x
R(x)$. We show  $T\vdash\varphi$ by Herbrand provability. Write
$\neg\varphi=\exists x \neg R(x)$, and let ${{\mathfrak c}}$ denote the Skolem constant
symbol ${\mathfrak f}_{\exists x \neg R(x)}$; so  we have $(\neg\varphi)^{\rm Sk}=\neg R({{\mathfrak c}})$.
Put $\Lambda=\{{{\mathfrak c}}, g{{\mathfrak c}}, {{\mathfrak f}}g{{\mathfrak c}}\}$, and assume (for the sake
of contradiction) that there is a $(T+\neg\varphi)-$evaluation $p$ on
$\Lambda$. Then $p$ must satisfy the following Skolem instances of
$T$ in $\Lambda$: $P(g{{\mathfrak c}},{{\mathfrak f}}g{{\mathfrak c}})$, $R({{\mathfrak c}})\vee S(g{{\mathfrak c}})$,
and $\neg P(g{{\mathfrak c}},{{\mathfrak f}}g{{\mathfrak c}})\vee\neg S(g{{\mathfrak c}})$. Whence $p$ must
also satisfy $\neg S(g{{\mathfrak c}})$ and $R({{\mathfrak c}})$. So $p$ cannot satisfy the
Skolem instance $\neg R({{\mathfrak c}})$ of $\neg\varphi$ in $\Lambda$.
Thus there cannot be any $(T+\neg\varphi)-$evaluation on $\Lambda$;
whence $T\vdash\varphi$.

Note that finding an appropriate $\Lambda$ is as complicated as finding a formal proof.
For example we could not have taken $\Lambda$ as $\{{{\mathfrak c}}, g{{\mathfrak c}}, {{\mathfrak f}} {{\mathfrak c}}\}$,
since the evaluation $q$ in the previous example
would be a $(T+\neg\varphi)-$evaluation on that set.
  \hfill $\subset\!\!\!\!\supset$}\end{example}

The following couple of examples give a thorough illustrations for the above ideas, and they will be actually used later in the paper.

\begin{example}\label{exampleA}{\rm
Let ${\rm Q}$ denote Robinson's Arithmetic over
the language of arithmetic $\langle0,{\mathfrak s},+,\cdot,\leqslant\rangle$, where $0$ is a
constant symbol, ${\mathfrak s}$ is a unary function symbol, $+,\cdot$ are binary
function symbols, and $\leqslant$ is a binary predicate symbol, whose axioms are:
\begin{align*}
A_1: & \ \  \forall x ({\mathfrak s} x\not=0) & A_2: & \ \  \forall x\forall y ({\mathfrak s} x ={\mathfrak s} y\rightarrow x=y)\\
A_3: & \ \   \forall x (x\not=0\rightarrow \exists y[x={\mathfrak s} y]) & \ \   A_4: & \ \   \forall x\forall y (x\leqslant y \leftrightarrow \exists z
[x+z=y])\\
A_5: & \ \   \forall x (x+0=x) & A_6: & \ \   \forall x\forall y (x+{\mathfrak s} y={\mathfrak s} (x+y))\\
A_7:  & \ \   \forall x (x\cdot 0=0) & A_8: & \ \   \forall x\forall y (x\cdot{\mathfrak s} y=x\cdot y + x)
\end{align*}
Let $\psi=\forall x(x\leqslant 0\rightarrow x=0)$ and $\varphi=\forall x\forall y(x\leqslant{\mathfrak s} y\rightarrow x={\mathfrak s} y\vee x\leqslant y)$. We can show
${\rm Q}\vdash\psi$ and ${\rm Q}\vdash\varphi$; these will be proved below by Herbrand provability.
Suppose ${\rm Q}$ has been Skolemized as below:
\begin{align*}
A_1^{\rm Sk}: & \ \   {\mathfrak s} x\not=0 & A_2^{\rm Sk}: & \ \   {\mathfrak s} x\not={\mathfrak s} y\vee x=y\\
A_3^{\rm Sk}: & \ \   x=0 \vee x={\mathfrak s}{\mathfrak p} x & A_4^{\rm Sk}: & \ \   [x\not\leqslant y\vee x+{\mathfrak h} (x,y)=y]\wedge[x+z\not=y\vee x\leqslant y]\\
A_5^{\rm Sk}: & \ \   x+0=x & A_6^{\rm Sk}: & \ \   x+{\mathfrak s} y={\mathfrak s} (x+y)\\
A_7^{\rm Sk}:  & \ \   x\cdot 0=0 & A_8^{\rm Sk}: & \ \   x\cdot{\mathfrak s} y=x\cdot y + x
\end{align*}
Here ${\mathfrak p}$ abbreviates ${\mathfrak f}_{\exists
y(x={{\mathfrak s} }y)}$ and ${\mathfrak h}$ stands for ${\mathfrak f}_{\exists z
(x+z=y)}$.

\noindent For a fixed term $t$, put  $\Sigma_t$ be the following set of terms:
\newline\centerline{$\Sigma_t=\{0, t, t+0, {\mathfrak h}(t,0), {\mathfrak p}{\mathfrak h}(t,0), {\mathfrak s}{\mathfrak p}{\mathfrak h}(t,0), t+{\mathfrak s}{\mathfrak p}{\mathfrak h}(t,0), {\mathfrak s}(t+{\mathfrak s}{\mathfrak p}{\mathfrak h}(t,0))\}$,} and suppose that $p$ is an ${\rm Q}-$evaluation on $\Sigma_t$. We show that $p\models t\not\leqslant 0\vee t=0$. Note that Skolemizing $\psi$ results in $\psi^{\rm Sk}=(x\not\leqslant 0\vee x=0)$.  If $p$ is such an evaluation and if $p\models t\leqslant 0$, then by $A_4$ we have $p\models t+{\mathfrak h}(t,0)=0$. Now, either $p\models{\mathfrak h}(t,0)=0$ or $p\not\models{\mathfrak h}(t,0)=0$. In the former case, we have $p\models t+0=t$ which by $A_5$ implies $p\models t=0$. In the latter case, by $A_3$ we get $p\models {\mathfrak h}(t,0)={\mathfrak s}{\mathfrak p}{\mathfrak h}(t,0)$, and then  $p\models 0=t+{\mathfrak h}(t,0)=t+{\mathfrak s}{\mathfrak p}{\mathfrak h}(t,0)={\mathfrak s}(t+{\mathfrak p}{\mathfrak h}(t,0))$ by $A_6$,  which is a contradiction with  $A_1$. Thus we showed that if $p\models t\leqslant 0$ then necessarily $p\models t=0$.

\noindent Now, for two fixed  terms $u,v$ define $\Gamma_{u,v}$ as

$\Gamma_{u,v}=\{0, u, v, {\mathfrak s} v,  {\mathfrak h}(u,{\mathfrak s} v),  {\mathfrak p}{\mathfrak h}(u,{\mathfrak s} v), {\mathfrak s}{\mathfrak p}{\mathfrak h}(u,{\mathfrak s} v),  u+{\mathfrak p}{\mathfrak h}(u,{\mathfrak s} v),  $

\hspace{3.25em} $u+{\mathfrak s}{\mathfrak p}{\mathfrak h}(u,{\mathfrak s} v),   {\mathfrak s} (u+{\mathfrak p}{\mathfrak h}(u,{\mathfrak s} v))\}$.

\noindent We show that any
${\rm Q}-$evaluation on $\Gamma_{u,v}$  must satisfy $u\not\leqslant{\mathfrak s} v\vee u={\mathfrak s} v\vee u\leq v$. Note that the Skolemized form of $\varphi$ is $\varphi^{\rm Sk}=(x\not\leqslant{\mathfrak s} y\vee x={\mathfrak s} y\vee x\leqslant y)$. Suppose $p$ is an ${\rm Q}-$evaluation on $\Gamma_{u,v}$.
Then
either $p\models {\mathfrak h}(u,{\mathfrak s} v)=0$ or $p\models
{\mathfrak h}(u,{\mathfrak s} v)\not=0$. In the former case, by $A_4$,
we have $p\models u\not\leqslant{\mathfrak s} v\vee u+0={\mathfrak s} v$, and
then by $A_5$, $p\models u\not\leqslant{\mathfrak s} v\vee u={\mathfrak s} v$.
And in the latter case $p\models {\mathfrak h}(u,{\mathfrak s} v)={\mathfrak s} {\mathfrak p}{\mathfrak h}(u,{\mathfrak s} v)$ by $A_3$, also by $A_4$ we have $p\models u\not\leqslant{\mathfrak s} v\vee u+{\mathfrak s}{\mathfrak p}{\mathfrak h}(u,{\mathfrak s} v)={\mathfrak s} v$. On
the other hand from $A_5$ we get $p\models u+{\mathfrak s}{\mathfrak p}{\mathfrak h}(u,{\mathfrak s} v)={\mathfrak s} (u+{\mathfrak p}{\mathfrak h}(u,{\mathfrak s} v))$. Whence we get $p\models u\not\leqslant{\mathfrak s} v\vee{\mathfrak s}(u+{\mathfrak p}{\mathfrak h}(u,{\mathfrak s} v))={\mathfrak s} v$,
then by $A_2$,  $p\models u\not\leqslant{\mathfrak s} v\vee u+{\mathfrak p}{\mathfrak h}(u,{\mathfrak s} v)=v$, which by $A_4$ implies $p\models
u\not\leqslant{\mathfrak s} v\vee u\leqslant v$. Hence, in both cases we showed
$p\models u\not\leqslant{\mathfrak s} v\vee u={\mathfrak s} v\vee u\leqslant v$.
Finally, let us note that one could present a Herbrand proof of
${\rm Q}\vdash\psi$ and ${\rm Q}\vdash\varphi$ very similarly. \hfill$\subset\!\!\!\!\supset$ }\end{example}
\begin{example}\label{exampleB}{\rm
In the language of Example \ref{exampleA}, $\langle0,{\mathfrak s},+,\cdot,\leqslant\rangle$, let $\textrm{ind}_\psi$ be the following induction scheme for the formula $\psi(x)$:
\newline\centerline{
 $\psi(0)\wedge\forall x\big(\psi(x)\rightarrow\psi({\mathfrak s} x)\big)\rightarrow \forall x\psi(x)$.}

\noindent Assume for the moment that $\psi$ is an atomic formula. Then the Skolemization  of  $\textrm{ind}_\psi$ results in
$\textrm{ind}_\psi^{\rm Sk}: \ \neg\psi(0)\vee \Big(\psi({\mathfrak c})\wedge\neg\psi({\mathfrak s}{\mathfrak c})\Big)\vee \psi(x)$, where ${\mathfrak c}$ is the Skolem constant symbol ${\mathfrak f}_{\exists x\big(\psi(x)\wedge\neg\psi({\mathfrak s} x)\big)}$.
 Then any $\textrm{ind}_\psi-$evaluation $p$ on the set of terms $\{0,{\mathfrak c},{\mathfrak s}{\mathfrak c},t\}$ must satisfy one of the following:

 either (1) $p\not\models\psi(0)$ or (2) $p\models\psi({\mathfrak c})\wedge\neg\psi({\mathfrak s}{\mathfrak c})$ or (3) $p\models\psi(t)$.

\noindent Now take $\psi(x)$ to be the existential formula $\exists y\varphi(x,y)$ in which $\varphi$ is an atomic formula. Then the Skolemized form of $\textrm{ind}_\psi$ will be as
\newline\centerline{$\textrm{ind}_\psi^{\rm Sk}: \ \neg\varphi(0,u)\vee \Big(\varphi({\mathfrak c},{\mathfrak q}{\mathfrak c})\wedge\neg\varphi({\mathfrak s}{\mathfrak c},v)\Big)\vee \varphi\big(x,{\mathfrak q}(x)\big)$,}
 where ${\mathfrak q}$ is the Skolem function symbol for the formula ${\exists y\varphi(x,y)}$, and ${\mathfrak c}$ is the Skolem constant symbol for the sentence  ${\exists x\big(\exists w\varphi(x,w)\wedge\forall v\neg\varphi({\mathfrak s} x,v)\big)}$. The variables $u$, $v$ and $x$ are free.

\noindent We will need the case of $\varphi(x,y)= \big(y\leqslant x\cdot x\wedge y=x\cdot x\big)$ in the proof of Theorem~\ref{ij2} below. In this case the Skolemized form of $\textrm{ind}_\psi$ is
\begin{align*}
(u\not\leqslant 0^2\vee u\not=0^2) & \bigvee  \\
\Big(\big({\mathfrak q}{\mathfrak c}\leqslant{\mathfrak c}^2
\wedge{\mathfrak q}{\mathfrak c}={\mathfrak c}^2\big)\wedge\big(v\not\leqslant({\mathfrak s}{\mathfrak c})^2
\vee v\not=({\mathfrak s}{\mathfrak c})^2\big)\Big) & \bigvee  \\
\big({\mathfrak q}(x)\leqslant x^2\wedge {\mathfrak q}(x)=x^2\big). & \\
\end{align*}
The notation $\varrho^2$ is a shorthand for $\varrho\cdot\varrho$.
 Define the set of terms $\Upsilon$ by \newline\centerline{$\Upsilon=\{0,  0+0,  0^2, {\mathfrak c}, {\mathfrak c}^2, {\mathfrak c}^2+0, {\mathfrak s}{\mathfrak c}, {\mathfrak q}{\mathfrak c},  ({\mathfrak s}{\mathfrak c})^2,  ({\mathfrak s}{\mathfrak c})^2+0\}$} and suppose $p$ is an $(Q+\textrm{ind}_\psi)-$evaluation on the set of terms $\Upsilon\cup\{t,t^2,{\mathfrak q}(t)\}$. Then $p$ must satisfy the following Skolem instance $(\eth)$ of $\textrm{ind}_\psi$ which is available in the set $\Upsilon\cup\{t,t^2,{\mathfrak q}(t)\}$:
 \begin{align*}
(0\not\leqslant 0^2\vee 0\not=0^2) & \bigvee  \\
\Big(\big({\mathfrak q}{\mathfrak c}\leqslant{\mathfrak c}^2\wedge{\mathfrak q}{\mathfrak c}={\mathfrak c}^2\big)\wedge\big(({\mathfrak s}{\mathfrak c})^2\not\leqslant({\mathfrak s}{\mathfrak c})^2
\vee ({\mathfrak s}{\mathfrak c})^2\not=({\mathfrak s}{\mathfrak c})^2\big)\Big) & \bigvee  \\
\big({\mathfrak q}(t)\leqslant t^2\wedge {\mathfrak q}(t)=t^2\big). & \\
\end{align*}
Now since $p\models 0\cdot 0=0+0=0$ then, by ${\rm Q}$'s axioms, $p\models 0\leqslant 0^2\wedge 0=0^2$, and so $p$ cannot satisfy the first disjunct of $(\eth)$. Similarly, since $p\models ({\mathfrak s}{\mathfrak c})^2+0=({\mathfrak s}{\mathfrak c})^2$ then $p\models ({\mathfrak s}{\mathfrak c})^2\leqslant ({\mathfrak s}{\mathfrak c})^2$, thus $p$ cannot satisfy the second disjunct of $(\eth)$ either, because $p\models ({\mathfrak s}{\mathfrak c})^2=({\mathfrak s}{\mathfrak c})^2$. Whence,  $p$ must satisfy the third disjucnt of $(\eth)$, then necessarily $p\models {\mathfrak q}(t)=t^2$ must hold.
\hfill $\subset\!\!\!\!\supset$
}\end{example}
In Example \ref{exampleA}  we used the axioms of Robinson's Arithmetic ${\rm Q}$ to derive two sentences that will be needed later (see Lemma \ref{lemm1}). In Example \ref{exampleB} we used an axiom of ${\rm I\Delta_0}$ to derive the existence of an squaring Skolem function symbol (see the proof of Theorem \ref{ij2})
\begin{remark}\label{remark1}{\rm
The arguments of the above two examples can be generalized as follows: if $T\vdash\forall \overline{x}\theta(\overline{x})$ where $\theta$ is an open (quantifier-less) RNNF formula, then $(\neg\forall \overline{x}\theta(\overline{x}))^{\rm Sk}=\neg\theta(\overline{c})$ in which $\overline{c}$ is a sequence of Skolem constant symbols. There exists a set of terms $\Gamma$ (constructed from the Skolem function and constant symbols of $T$ with $\overline{c}$) such that there exists no $(T+\neg\forall\overline{x}\theta(\overline{x}))-$evluation on $\Gamma$. So, for any sequnece of terms $\overline{t}$, if $\Gamma(\overline{t})$ is the set of terms which result from the terms of $\Gamma$ by substituting $\overline{c}$ with $\overline{t}$, then any $T-$evaluation on $\Gamma(\overline{t})$ must satisfy the formula $\theta(\overline{t})$.
\hfill $\subset\!\!\!\!\supset$}
\end{remark}


\subsection{Arithmetization}
Fix ${\mathcal L}_A$ to be our language of arithmetic; one can set
${\mathcal L}_A=\langle 0,1,+,\cdot,<\rangle$ as e.g. in \cite{Kra95} or
${\mathcal L}_A=\langle 0,{\mathfrak s},+,\cdot,\leqslant\rangle$ as e.g. in
\cite{HP98}.  Later it will be clear that choosing this fixed language is not of much importance.

Peano's arithmetic ${\rm PA}$ is the first-order theory that extends ${\rm Q}$  (see Example \ref{exampleA}) by the following induction schema   for any arithmetical formula $\varphi(x)$:   \  $\varphi(0)\,\wedge\,\forall x\big(\varphi(x)\rightarrow\varphi(x+1)\big)\rightarrow\forall
x\varphi(x).$ Fragments of ${\rm PA}$ are extensions of ${\rm Q}$ with the induction schema restricted to a class of formulas. A formula is called bounded if its every quantifier is bounded, i.e., is either of the
form $\forall x\!\leqslant\! t(\ldots)$ or $\exists\, x\!\leqslant\! t(\ldots)$ where $t$ is a term; they are read as $\forall x (x\!\leqslant\! t\rightarrow\ldots)$ and $\exists x (x\!\leqslant\! t \wedge\ldots)$ respectively. It is easy to see that bounded formulas are decidable. The theory ${\rm I\Delta_0}$, also called bounded arithmetic, is axiomatized by ${\rm Q}$ plus the induction schema for bounded formulas.
The exponentiation function $\exp$ is defined by $\exp(x)=2^x$; the formula ${\rm Exp}$ expresses its totality: ($\forall x\exists y[y\!=\!\exp(x)]$). The converse of $\exp$ is denoted by $\log$ which is formally defined as $\log x={\rm min}\{y\mid x\leqslant\exp(y)\}$; and the cut $\ell\!{\it og}$ consists of the logarithms of all elements: $\ell\!{\it og}=\{x\mid \exists y [\exp(x)=y]\}$. The superscripts above the function symbols indicate the iteration of the functions: $\exp^2(x)=\exp(\exp(x))$, $\log^2 x=\log\log x$; similarly the cut $\ell\!{\it og}^n$ is $\{x\mid \exists y [\exp^n(x)=y]\}$. Let us recall that ${\rm Exp}$ is not provable in ${\rm I\Delta_0}$; and
sub-theories of ${\rm  I\Delta_0+Exp}$ are called weak arithmetics. Between ${\rm  I\Delta_0}$ and ${\rm I\Delta_0+Exp}$ a hierarchy of theories is considered in the literature, which has close connections with computational complexity. Define the function $\omega_m$ to be $\omega_m(x)=\exp^m\big((\log^m x)\cdot(\log^m x)\big)$. It is customary to define this function by induction:
 ${\rm \omega_0}(x)=x^2$ and ${\rm \omega_{n+1}}(x)=\exp({\rm \omega_n}(\log x))$. Let
 ${\rm \Omega_m}$ express the totality of ${\rm \omega_m}$ (i.e., ${\rm \Omega_m}\equiv \forall x\exists y [y={\rm \omega_m}(x)]$).

 By G\"odel's coding method, we are now rest assured that the concepts
introduced in the pervious section all can be formalized (and
arithmetized) in the language of arithmetic. But we need just a bit
more; and that is an ``effective" coding, suitable for bounded
arithmetic. The one we adopt here is taken from Chapter~V of \cite{HP98}.  For
convenience, and shortening the computations, we introduce the ${\mathcal P}$ notation.

\begin{definition}{\rm
We say $x$ is of ${\mathcal P}(y)$, when  the code of $x$ is bounded  above by a polynomial of $y$; and we write this as $\ulcorner x\urcorner\leqslant{\mathcal P}(y)$, meaning that for some $n$ the inequality $\ulcorner x\urcorner\leqslant y^n+n$ holds.}\hfill $\subset\!\!\!\!\supset$
\end{definition}

Let us note that $X\leqslant{\mathcal P}(Y)$ is equivalent to the old (more familiar) $O-$notation $``\log  X\!
\in\!{\mathcal O}(\log Y)"$. Here we collect some very basic facts about this fixed efficient coding  that will be needed later.

\begin{remark}{\rm Let $A$ be a set or a sequence of terms, and let $|A|$ denote the cardinality of $A$, when $A$ is a set, and the same $|A|$ denote the length of $A$, when $A$ is a sequence. Then
\begin{itemize}
\item $\ulcorner\langle\alpha\rangle\urcorner \leqslant 9(\ulcorner\alpha\urcorner+1)^2$ (Lemma 3.7.2 page 297 of \cite{HP98});
\item $\ulcorner A\!\frown\! B\urcorner \ \left(\ulcorner A\cup B\urcorner\right)
\leqslant 64\cdot(\ulcorner A\urcorner\cdot \ulcorner B\urcorner)$ (Proposition 3.29 page 311  of \cite{HP98});
\item $\left(|A|\right)\leqslant (\log \ \ulcorner A\urcorner)$ (Definition 3.27 
and Section (e) pages 304--310  of \cite{HP98});
\end{itemize}
where $\ulcorner A\!\frown\! B\urcorner$ is the concatenation of (the sequences) $A$ and $B$. \hfill $\subset\!\!\!\!\supset$
}\end{remark}

If we let ${\mathcal L}_A^{\rm Sk}$ to be the closure of ${\mathcal L}_A$ under Skolem function and constant symbols, i.e., let ${\mathcal L}_A^{\rm Sk}$  be the smallest set that contains ${\mathcal L}_A$ and for any ${\mathcal L}_A^{\rm Sk}-$formula $\exists x \phi(x)$ we have ${\mathfrak f}_{\exists x \phi(x)}\!\in\!{\mathcal L}_A^{\rm Sk}$, then
  this new countable language can also be re-coded, and this recoding can be generalized to ${\mathcal L}_A^{\rm Sk}-$terms and ${\mathcal L}_A^{\rm Sk}-$formulas.
We wish to compute an upper bound for the codes of evaluations on a
set of terms $\Lambda$. For a given $\Lambda$, all the atomic
formulas, in the language ${\mathcal L}_A$, constructed from
terms of $\Lambda$ are either of the form $t=s$ or of the form
$t\leqslant s$ for some $t,s\!\in\!\Lambda$. And every member of an
evaluation $p$ on $\Lambda$ is an ordered pair like $\langle
t=s,i\rangle$ or $\langle t\leqslant s,i\rangle$ for some $t,s\!\in\!\Lambda$
and $i\!\in\!\{0,1\}$. Thus the code of any member of $p$ is a constant multiple of
$(\ulcorner t\urcorner\cdot \ulcorner s\urcorner)^2$, and so the code of $p$ is bounded above by
${\mathcal P}(\prod_{t,s\in\Lambda}\ulcorner t\urcorner\cdot \ulcorner s\urcorner)$.

\begin{lemma}\label{codeomega1}
For a set of terms $\Lambda$ and evaluation $p$ on it,
{$\ulcorner p\urcorner \leqslant{\mathcal P}\left(\omega_1(\ulcorner\Lambda\urcorner)\right)$.}
\end{lemma}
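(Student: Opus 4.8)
The plan is to bound $\ulcorner p\urcorner$ by chaining together the three facts collected in the preceding Remark with the observation just made in the text, namely that $\ulcorner p\urcorner\leqslant{\mathcal P}\left(\prod_{t,s\in\Lambda}\ulcorner t\urcorner\cdot\ulcorner s\urcorner\right)$. First I would note that the double product $\prod_{t,s\in\Lambda}\ulcorner t\urcorner\cdot\ulcorner s\urcorner$ equals $\left(\prod_{t\in\Lambda}\ulcorner t\urcorner\right)^{2|\Lambda|}$, so it suffices to bound $\prod_{t\in\Lambda}\ulcorner t\urcorner$ and $|\Lambda|$ separately in terms of $\ulcorner\Lambda\urcorner$. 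For the product, the concatenation bound $\ulcorner A\cup B\urcorner\leqslant 64\cdot\ulcorner A\urcorner\cdot\ulcorner B\urcorner$ run backwards (or rather: iterated through the elements of $\Lambda$) shows that $\prod_{t\in\Lambda}\ulcorner t\urcorner$ is at most $\ulcorner\Lambda\urcorner$ times a factor $64^{|\Lambda|}$ coming from the accumulated constants; and for the cardinality we have $|\Lambda|\leqslant\log\ulcorner\Lambda\urcorner$ directly from the third bullet. Hence $\prod_{t\in\Lambda}\ulcorner t\urcorner\leqslant 64^{\log\ulcorner\Lambda\urcorner}\cdot\ulcorner\Lambda\urcorner$, which is polynomial in $\ulcorner\Lambda\urcorner$ (since $64^{\log\ulcorner\Lambda\urcorner}=\ulcorner\Lambda\urcorner^{\log 64}$ up to the choice of base in $\log$, i.e. a fixed power).

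Next I would assemble the exponent. We have shown $\ulcorner p\urcorner\leqslant{\mathcal P}\left(\left(\prod_{t\in\Lambda}\ulcorner t\urcorner\right)^{2|\Lambda|}\right)$, and substituting the two estimates gives an exponent $2|\Lambda|\leqslant 2\log\ulcorner\Lambda\urcorner$ applied to a base that is at most ${\mathcal P}(\ulcorner\Lambda\urcorner)$. So $\ulcorner p\urcorner\leqslant{\mathcal P}\left(\ulcorner\Lambda\urcorner^{\,c\log\ulcorner\Lambda\urcorner}\right)$ for a fixed constant $c$ (absorbing the polynomial exponents and the $64^{|\Lambda|}$ factor, which is itself $\ulcorner\Lambda\urcorner^{O(1)}$ and thus contributes only another $O(\log\ulcorner\Lambda\urcorner)$ to the exponent after exponentiating by $|\Lambda|$). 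Writing $N=\ulcorner\Lambda\urcorner$, the bound reads $\ulcorner p\urcorner\leqslant N^{c\log N}+\text{const}$, i.e. $\log\ulcorner p\urcorner\leqslant c(\log N)^2+O(1)$.

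Finally I would recognize the right-hand side as (a polynomial in) $\omega_1(N)$. Since $\omega_1(x)=\exp\left((\log x)^2\right)$, we have $\log\omega_1(N)=(\log N)^2$, so $\log\ulcorner p\urcorner\leqslant c\log\omega_1(N)+O(1)$, which is exactly the statement $\ulcorner p\urcorner\leqslant{\mathcal P}\left(\omega_1(\ulcorner\Lambda\urcorner)\right)$ in the $O$-notation reformulation ``$\log X\in{\mathcal O}(\log Y)$'' given in the text. The one point requiring a little care — and the step I expect to be the main nuisance rather than a real obstacle — is making sure the accumulated constant factors (the $64^{|\Lambda|}$ from concatenation, the $9(\cdot)^2$ from pairing, the constant multiple in the code of each ordered pair $\langle t=s,i\rangle$) all get swallowed correctly: each of them, when raised to the power $|\Lambda|\leqslant\log N$ that appears from the double product, turns into a factor of the form $N^{O(1)}$, so they never escape the polynomial-in-$\omega_1(N)$ envelope, but one must check that the iteration of the concatenation bound genuinely yields $64^{|\Lambda|}\cdot\ulcorner\Lambda\urcorner$ and not something worse. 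All of this is routine given the three cited bounds from \cite{HP98}, and no use of $\Omega_1$ or of any properties of evaluations beyond their syntactic shape is needed.
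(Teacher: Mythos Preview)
Your proposal is correct and follows essentially the same route as the paper's proof: rewrite the double product as $\left(\prod_{t\in\Lambda}\ulcorner t\urcorner\right)^{2|\Lambda|}$, bound the single product by ${\mathcal P}(\ulcorner\Lambda\urcorner)$ and the exponent by $2\log\ulcorner\Lambda\urcorner$, and then identify $\ulcorner\Lambda\urcorner^{\log\ulcorner\Lambda\urcorner}$ with $\omega_1(\ulcorner\Lambda\urcorner)$. The paper is simply terser and absorbs all the constant-tracking you spell out into the ${\mathcal P}$ notation in a single line.
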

\begin{proof}
It suffices, by the above remark and what was said afterward, to note that
 $\prod_{t,s\in\Lambda}\ulcorner t\urcorner\cdot
\ulcorner s\urcorner=\prod_{t\in\Lambda}(\ulcorner t\urcorner)^{2|\Lambda|}=
(\prod_{t\in\Lambda}\ulcorner t\urcorner)^{2|\Lambda|}\leqslant{\mathcal P}(\ulcorner\Lambda\urcorner)
^{2\log\ulcorner\Lambda\urcorner}\leqslant{\mathcal P}(\ulcorner\Lambda\urcorner^{\log\ulcorner\Lambda\urcorner})$
and that
 $\ulcorner\Lambda\urcorner^{\log\ulcorner\Lambda\urcorner}\leqslant
\omega_1(\ulcorner\Lambda\urcorner)$.
\end{proof}

Let us have another look at the above lemma, which is of great importance. For a set of terms $\Lambda$, there are $|\Lambda|$ terms in it (the cardinality of $\Lambda$). So, there are $2|\Lambda|^2$ atomic formulas constructed from the terms of $\Lambda$ (atomic formulas of the form $t=s$ or $t\leqslant s$ for $t,s\!\in\!\Lambda$). And thus, there are $\exp(2|\Lambda|^2)$ different evaluations on the set $\Lambda$. And finally note that by $|\Lambda|\leqslant (\log\ulcorner\Lambda\urcorner)$ we get  $\exp(2|\Lambda|^2)\leqslant {\mathcal P}\left(\exp((\log\ulcorner\Lambda\urcorner)^2)\right)
\leqslant{\mathcal P}\left(\omega_1(\ulcorner\Lambda\urcorner)\right)$. So, in the presence of $\omega_1(\ulcorner\Lambda\urcorner)$ we have all the evaluations on $\Lambda$ in our disposal.


All these concepts can be expressed in the language of arithmetic
${\mathcal L}_A$ by appropriate formulas. And ``Herbrand Consistency of the
 theory $T$" can be arithmetized as ``for
every set of terms there exists an $T-$evaluation on it".
Let ${\rm HCon}(T)$ denote the ${\mathcal L}_A-$formula ``$T$ is
Herbrand consistent".


\section{Herbrand Models}
For a theory $T$, when $\Lambda$ is the set of all terms
(constructed from the function symbols of the language of $T$ and
also the Skolem function symbols of the formulas of $T$) any
$T-$evaluation on $\Lambda$ induces a model of $T$, which is called
a {\em Herbrand model}.
Here we use this notion  for building up a definable inner model, which will constitute the hear of the proof of our main result for ${\rm I\Delta_0+\Omega_1}$.

\subsection{Arithmetically Definable Herbrand Models}
In the sequel, we arithmetize Herbrand models.

\begin{definition}{\rm
Let ${\mathcal L}$ be a language  and $\Lambda$ be a
 set of (ground) terms (constructed by the Skolem constant and function symbols of ${\mathcal L}$).

\noindent Put $\Lambda^{\langle 0\rangle}=\Lambda$, and define inductively

$\Lambda^{\langle k+1\rangle}=\Lambda^{\langle k\rangle}\cup
\{f(t_1,\ldots,t_m)\mid f\!\in\!{\mathcal L}
\;\&\;t_1,\ldots,t_m\!\in\!\Lambda^{\langle k\rangle}\}$

\hspace{5.9em}  $\cup  \,\{{\mathfrak f}_{\exists
x\psi(x)}(t_1,\ldots,t_m)\mid \ulcorner\psi\urcorner\leqslant k \;\&\;
t_1,\ldots,t_m\!\in\!\Lambda^{\langle k\rangle}\}$.

\noindent Let $\Lambda^{\langle \infty\rangle}$ denote the union
$\bigcup_{k\in\mathbb{N}}\Lambda^{\langle k\rangle}$.

\noindent Suppose $p$ is an evaluation on $\Lambda^{\langle \infty\rangle}$.
Define ${\textswab M}(\Lambda,p)=\{t/p\mid t\!\in\!\Lambda^{\langle
\infty\rangle}\}$ and put the ${\mathcal L}-$structure on it by
\begin{itemize}
\item $f^{{\textswab M}(\Lambda,p)}(t_1/p,\ldots,t_m/p)=f(t_1,\ldots,t_m)/p$, and
\item $R^{{\textswab M}(\Lambda,p)}=\{(t_1/p,\ldots,t_m/p)\mid p\models
R(t_1,\ldots,t_m)\}$;
\end{itemize}
for $f,R\!\in\!{\mathcal L}$ and
$t_1,\ldots,t_m\!\in\!\Lambda^{\langle\infty\rangle}$. \hfill $\subset\!\!\!\!\supset$
}\end{definition}

\begin{lemma}\label{lm:mlambdap}{\rm
The definition of ${\mathcal L}-$structure on ${\textswab M}(\Lambda,p)$ is
well-defined, and when $p$ is an $T-$evaluation on
$\Lambda^{\langle\infty\rangle}$, for an ${\mathcal L}-$theory $T$, then
${\textswab M}(\Lambda,p)\models T$. }\end{lemma}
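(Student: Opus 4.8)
The plan is to verify the three assertions in turn, leaning throughout on two structural facts about $\Lambda^{\langle\infty\rangle}$ that are immediate from its inductive definition: it is closed under every function symbol of $\mathcal{L}$ and under every Skolem function symbol $\mathfrak{f}_{\exists x\psi(x)}$. Indeed, if $t_1,\dots,t_m\in\Lambda^{\langle\infty\rangle}$ then all the $t_i$ lie in a common $\Lambda^{\langle k\rangle}$ (the sets increase and there are finitely many of them), and taking $k\geqslant\ulcorner\psi\urcorner$ puts $\mathfrak{f}_{\exists x\psi(x)}(t_1,\dots,t_m)$ into $\Lambda^{\langle k+1\rangle}$; in particular every closed $\mathcal{L}^{\rm Sk}$-term lies in $\Lambda^{\langle\infty\rangle}$, since all constants already appear at level $1$, and $\Lambda^{\langle\infty\rangle}$ is nonempty by the Convention that $\mathcal{L}$ has a constant symbol. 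Well-definedness of the structure then splits in two. Closure shows $f(t_1,\dots,t_m)/p$ is an element of ${\textswab M}(\Lambda,p)$ whenever $t_1,\dots,t_m\in\Lambda^{\langle\infty\rangle}$, and if $t_i\backsim_p s_i$ for all $i$ then the congruence property of $\backsim_p$ recorded just before the definition gives $f(t_1,\dots,t_m)\backsim_p f(s_1,\dots,s_m)$, i.e.\ the two classes coincide; so $f^{{\textswab M}(\Lambda,p)}$ is independent of the chosen representatives. For a relation symbol $R$, if $t_i\backsim_p s_i$ for all $i$ and $p\models R(t_1,\dots,t_m)$, then successive applications of clause (ii) of Definition~\ref{defeval}, with the atomic formula $\varphi(x)\equiv R(s_1,\dots,s_{i-1},x,t_{i+1},\dots,t_m)$ at the $i$-th step, yield $p\models R(s_1,\dots,s_m)$; hence membership of a tuple of classes in $R^{{\textswab M}(\Lambda,p)}$ is representative-independent too.

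Next I would show ${\textswab M}(\Lambda,p)\models T$. It is convenient to regard ${\textswab M}(\Lambda,p)$ as carrying also the interpretations of all Skolem function symbols, given by the same clause $\mathfrak{f}_{\exists x\psi(x)}^{{\textswab M}(\Lambda,p)}(t_1/p,\dots,t_m/p)=\mathfrak{f}_{\exists x\psi(x)}(t_1,\dots,t_m)/p$ (well defined for the same reasons, and irrelevant to the satisfaction of $\mathcal{L}$-sentences). A routine induction on terms gives $\tau^{{\textswab M}(\Lambda,p)}=\tau/p$ for every closed $\mathcal{L}^{\rm Sk}$-term $\tau$, the base case being the clause for constants; and then an induction on quantifier-free $\mathcal{L}^{\rm Sk}$-formulas $\theta$ all of whose terms are closed terms from $\Lambda^{\langle\infty\rangle}$ gives the transfer principle $p\models\theta\iff{\textswab M}(\Lambda,p)\models\theta$ (the atomic case uses the term computation together with $\tau_1/p=\tau_2/p\iff\tau_1\backsim_p\tau_2$ and well-definedness of $R^{{\textswab M}(\Lambda,p)}$, while the Boolean clauses for $p$ coincide with the Tarski clauses for ${\textswab M}(\Lambda,p)$). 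Now fix $\varphi\in T$. Since $p$ is a $T$-evaluation on $\Lambda^{\langle\infty\rangle}$ it satisfies every Skolem instance of $\varphi$ available there, i.e.\ $p\models\varphi^{\rm Sk}[t_1/x_1,\dots,t_n/x_n]$ for all terms $t_1,\dots,t_n\in\Lambda^{\langle\infty\rangle}$; here closure of $\Lambda^{\langle\infty\rangle}$ under the relevant Skolem symbols is exactly what guarantees that all these instances are genuinely available. By the transfer principle ${\textswab M}(\Lambda,p)\models\varphi^{\rm Sk}[t_1/x_1,\dots,t_n/x_n]$ for all such $t_i$, and since every element of ${\textswab M}(\Lambda,p)$ has the form $t/p$ with $t\in\Lambda^{\langle\infty\rangle}$, this says precisely that ${\textswab M}(\Lambda,p)\models\forall x_1\cdots\forall x_n\,\varphi^{\rm Sk}$.

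To pass from the Skolemized form back to $\varphi$ itself, let $\varphi'$ be the RNNF of $\varphi$, so that $\varphi'$ is logically equivalent to $\varphi$ and $\varphi^{\rm Sk}$ is $(\varphi')^{\mathsf S}$ with all universal quantifiers deleted. Two standard facts close the gap. First, $\forall x_1\cdots\forall x_n\,\varphi^{\rm Sk}$ and $(\varphi')^{\mathsf S}$ have the same quantifier-free matrix and only universal quantifiers, hence are logically equivalent (relocating universal quantifiers to the front of a rectified formula is a valid equivalence); so ${\textswab M}(\Lambda,p)\models(\varphi')^{\mathsf S}$. Second, $(\varphi')^{\mathsf S}$ logically entails $\varphi'$: in an RNNF formula every existential subformula occurs positively, and replacing $\exists x\,\chi_1(x,\overline{y})$ by $\chi_1(\mathfrak{f}_{\exists x\chi_1(x)}(\overline{y}),\overline{y})$ only strengthens the formula, since the latter entails $\exists x\,\chi_1(x,\overline{y})$. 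Therefore ${\textswab M}(\Lambda,p)\models\varphi'$, and so ${\textswab M}(\Lambda,p)\models\varphi$. As $\varphi\in T$ was arbitrary, ${\textswab M}(\Lambda,p)\models T$.

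The argument carries no genuine difficulty; its substance is bookkeeping. The one point that must not be glossed over is the closure of $\Lambda^{\langle\infty\rangle}$ under all Skolem function symbols — which is precisely what the third clause in the definition of $\Lambda^{\langle k+1\rangle}$, admitting Skolem terms of ever larger code, is designed to secure — for without it some of the Skolem instances needed to witness the quantifiers of the axioms of $T$ would fail to be available in $\Lambda^{\langle\infty\rangle}$, and the step from ``$p$ satisfies all substitution instances of $\varphi^{\rm Sk}$'' to ``${\textswab M}(\Lambda,p)\models\varphi$'' would break down.
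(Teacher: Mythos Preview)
Your proof is correct and follows essentially the same approach as the paper: well-definedness via the congruence properties of $\backsim_p$, closure of $\Lambda^{\langle\infty\rangle}$ under all Skolem function symbols, and a transfer principle for atomic formulas. The only cosmetic difference is in the final step: the paper suggests a direct induction on the complexity of RNNF formulas $\psi$ (handling the existential case by invoking the Skolem witness $\mathfrak{f}_{\exists x\psi}(\overline{t})\in\Lambda^{\langle\infty\rangle}$), whereas you first establish ${\textswab M}(\Lambda,p)\models\forall\overline{x}\,\varphi^{\rm Sk}$ and then unwind the Skolemization via the logical implications $(\varphi')^{\mathsf S}\Rightarrow\varphi'\equiv\varphi$. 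Both routes are standard and amount to the same content; your version is simply more explicit, and your closing remark correctly identifies the one genuinely load-bearing point.
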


\begin{proof} That the definitions of  $f^{{\textswab M}(\Lambda,p)}$ and $R^{{\textswab M}(\Lambda,p)}$ are well-defined follows directly from  the definition of an evaluation (Definition \ref{defeval}). By the definition of $\Lambda^{\langle\infty\rangle}$ the structure ${\textswab M}(\Lambda,p)$ is closed under all the Skolem functions of ${\mathcal L}$, and moreover it satisfies an atomic (or negated atomic) formula
$A(t_1/p,\ldots,t_m/p)$ if and only if $p\models A(t_1,\ldots,t_m)$.
Then it can be shown, by induction on the complexity of formulas,
that for every RNNF formula $\psi$, we have ${\textswab M}(\Lambda,p)\models\psi$ whenever $p$ satisfies all the available
Skolem instances of $\psi$ in $\Lambda^{\langle\infty\rangle}$.
\end{proof}

We need an  upper bound on the size (cardinal) and the code of $\Lambda^{\langle j\rangle}$.

\begin{lemma}\label{lambdacode}
The following inequalities hold when $\ulcorner\Lambda\urcorner$ and
$|\Lambda|$ are sufficiently larger than $n$:

{\rm (1)}  \ $|\Lambda^{\langle n\rangle}|\leqslant{\mathcal P}\left( |\Lambda|^{n!}\right)$, and

{\rm (2)} \  $\ulcorner\Lambda^{\langle n\rangle}\urcorner\leqslant
{\mathcal P}\Big(\big(\ulcorner\Lambda\urcorner\big)^{|\Lambda|^{(n+1)!}}\Big)$.
\end{lemma}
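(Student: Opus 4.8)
The plan is to prove both bounds by induction on $n$, converting the recursive clause that defines $\Lambda^{\langle k+1\rangle}$ out of $\Lambda^{\langle k\rangle}$ into recursions for the cardinality and for the code. Throughout I use two elementary facts about the function symbols that can head a term newly appearing at level $k+1$: the symbols of $\mathcal{L}$ are finitely many and of bounded arity, say $\leqslant a$; and the Skolem symbols $\mathfrak{f}_{\exists x\psi(x)}$ with $\ulcorner\psi\urcorner\leqslant k$ number at most $k+1$ (distinct G\"odel codes), each has arity at most $\ulcorner\psi\urcorner\leqslant k$ (a formula has no more free variables than its code), and under the re-coding of $\mathcal{L}_A^{\rm Sk}$ each has code $\leqslant\mathcal{P}(k)$. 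Consequently, once $|\Lambda|$ and $\ulcorner\Lambda\urcorner$ are sufficiently larger than $n$, every such generator has code $\leqslant\mathcal{P}(\ulcorner\Lambda\urcorner)$ and arity $\leqslant n$.

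For {\rm (1)}, write $N_k=|\Lambda^{\langle k\rangle}|$. The definition gives $N_{k+1}\leqslant N_k+(|\mathcal{L}|+k)\cdot N_k^{\,\max(a,k)}$, which for $k\geqslant a$ is $\leqslant(|\mathcal{L}|+k+1)\,N_k^{\,k}$. The first $a$ levels raise $|\Lambda|$ only to a bounded power, so $N_a\leqslant\mathcal{P}(|\Lambda|)\leqslant\mathcal{P}(|\Lambda|^{\,a!})$; feeding this into the recursion and using that $|\Lambda|$ is sufficiently larger than $n$ to absorb the factors $|\mathcal{L}|+k+1$ into the exponent, an induction yields $N_n\leqslant\mathcal{P}(|\Lambda|^{\,n!})$. (The key inequality at the inductive step is $jk\cdot k!\leqslant j(k+1)\,k!=j\,(k+1)!$, so a single polynomial witness $j$ works at every level, once it is chosen large enough at level $a$.)

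For {\rm (2)}, a term of $\Lambda^{\langle k+1\rangle}$ not already in $\Lambda^{\langle k\rangle}$ has the form $g(t_1,\dots,t_m)$ with $m\leqslant k$, with $g$ a generator as above (so $\ulcorner g\urcorner\leqslant\mathcal{P}(\ulcorner\Lambda\urcorner)$), and with $t_1,\dots,t_m\in\Lambda^{\langle k\rangle}$ (so $\ulcorner t_i\urcorner\leqslant\mathcal{P}(\ulcorner\Lambda^{\langle k\rangle}\urcorner)$). Coding this term as a sequence and applying about $m+1$ times the inequalities $\ulcorner\langle\alpha\rangle\urcorner\leqslant 9(\ulcorner\alpha\urcorner+1)^2$ and $\ulcorner A\!\frown\!B\urcorner\leqslant 64\,\ulcorner A\urcorner\cdot\ulcorner B\urcorner$ recalled above, its code is $\leqslant\mathcal{P}\big((\ulcorner\Lambda^{\langle k\rangle}\urcorner)^{\,k}\big)$. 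Since $\Lambda^{\langle k+1\rangle}$ is the union of $\Lambda^{\langle k\rangle}$ with at most $(|\mathcal{L}|+k)\,N_k^{\,k}$ such terms, the union bound applied once per new element gives
\[
\ulcorner\Lambda^{\langle k+1\rangle}\urcorner\ \leqslant\ \mathcal{P}\Big(\big(\ulcorner\Lambda^{\langle k\rangle}\urcorner\big)^{N_k^{\,O(k)}}\Big),\qquad\text{hence}\qquad \log\ulcorner\Lambda^{\langle k+1\rangle}\urcorner\ \leqslant\ N_k^{\,O(k)}\cdot\log\ulcorner\Lambda^{\langle k\rangle}\urcorner .
\]
Substituting the bound $N_k\leqslant\mathcal{P}(|\Lambda|^{\,k!})$ from {\rm (1)} turns this into $\log\ulcorner\Lambda^{\langle k+1\rangle}\urcorner\leqslant|\Lambda|^{\,O(k\cdot k!)}\cdot\log\ulcorner\Lambda^{\langle k\rangle}\urcorner$; unrolling down to a fixed level and using $\sum_{j\leqslant n}j!\leqslant 2\cdot n!$ gives $\log\ulcorner\Lambda^{\langle n\rangle}\urcorner\leqslant|\Lambda|^{\,O(n!)}\cdot\log\ulcorner\Lambda\urcorner$, i.e. $\ulcorner\Lambda^{\langle n\rangle}\urcorner\leqslant\ulcorner\Lambda\urcorner^{\,|\Lambda|^{O(n!)}}$. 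Since $(n+1)!=(n+1)\cdot n!$ eventually exceeds $O(n!)$, this is $\leqslant\mathcal{P}\big(\ulcorner\Lambda\urcorner^{\,|\Lambda|^{(n+1)!}}\big)$, which is {\rm (2)}.

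The step I expect to be most delicate is the bookkeeping of multiplicative constants: coding a term of length $m$ or an $m$-fold union introduces factors of order $9^{m}$ and $64^{m}$, the supply of available generators grows like $|\mathcal{L}|+k$, and the admissible arity grows like $k$. None of these is harmful in isolation, but they accumulate over the $n$ levels, and the hypothesis that $|\Lambda|$ and $\ulcorner\Lambda\urcorner$ be sufficiently larger than $n$ is exactly what is needed to swallow them so that the exponent of $|\Lambda|$ remains below $n!$ in {\rm (1)} and below $(n+1)!$ in {\rm (2)}.
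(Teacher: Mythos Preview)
Your argument is correct and follows essentially the same inductive strategy as the paper: derive the recursion $\sigma_{k+1}\leqslant\mathcal{P}(\sigma_k^{k+1})$ for cardinalities and $\lambda_{k+1}\leqslant\mathcal{P}(\lambda_k^{\sigma_k^{k}})$ for codes, then unroll. The only cosmetic difference is that for {\rm(2)} the paper obtains the code recursion via an auxiliary bound $\ulcorner A^m\urcorner\leqslant\mathcal{P}(\ulcorner A\urcorner^{|A|^m})$ on Cartesian powers, whereas you bound each new term's code and iterate the union estimate---both routes land on the same recursion.
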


\begin{proof}
Denote $\ulcorner\Lambda^{\langle k\rangle}\urcorner$ by $\lambda_k$ (thus $\ulcorner\Lambda\urcorner=\lambda_0=\lambda$) and $|\Lambda^{\langle k\rangle}|$ by $\sigma_k$ (and thus $|\Lambda|=\sigma_0=\sigma$). We first note that $\sigma_{k+1}\leqslant \sigma_k+M\sigma_k^M+k\sigma_k^k$ for a fixed $M$. Thus $\sigma_{k+1}\leqslant{\mathcal P}(\sigma_k^{k+1})$, and then, by an inductive argument, we have $\sigma_n\leqslant{\mathcal P}(\sigma^{n!})$. For the second statement, we first compute an upper bound for the code of the Cartesian power $A^m$ for a set $A$. By an argument similar to that of the proof of Lemma~\ref{codeomega1}, we have $\ulcorner A^{k+1}\urcorner\leqslant {\mathcal P}\big(\prod_{t\in A^k  \&  s\in A}\ulcorner t\urcorner \cdot \ulcorner s\urcorner\big)\leqslant {\mathcal P}\big( \ulcorner A^k\urcorner^{|A|}\cdot\ulcorner A\urcorner^{|A|^k}\big)$, and thus $\ulcorner A^m\urcorner \leqslant {\mathcal P}\big(\ulcorner A\urcorner ^{|A|^m}\big)$ can be shown by induction on $m$. Now we have $\lambda_{k+1}\leqslant{\mathcal P}\big(\ulcorner \Lambda^{\langle k\rangle}\urcorner\cdot\ulcorner (\Lambda^{\langle k\rangle})^M\urcorner\cdot\ulcorner (\Lambda^{\langle k\rangle})^k\urcorner\big)$ for a fixed $M$. So, $\lambda_{k+1}\leqslant {\mathcal P}\big(\lambda_k^{{\sigma_k}^{k}}\big)$ and finally our desired conclusion $\lambda_m\leqslant{\mathcal P}\big(\lambda^{\sigma^{(m+1)!}}\big)$ follows by induction.
\end{proof}

Stating the above fact as a lemma, despite of the fact that it  is indeed a crucial tool  for our arguments,
let us state the following corollary of it as a theorem, and later on we will use the theorem and will leave the lemma right here.

\begin{theorem}\label{log4}
If for a set of terms $\Lambda$ with non-standard
$\ulcorner\Lambda\urcorner$ the value
$\omega_2(\ulcorner\Lambda\urcorner)$ exists, then for a non-standard
$j$ the value $\ulcorner\Lambda^{\langle j\rangle}\urcorner$ will
exist.
\end{theorem}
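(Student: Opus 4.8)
The plan is to run $\Lambda^{\langle j\rangle}$ through the size bound of Lemma~\ref{lambdacode} and observe that, for a suitably chosen nonstandard $j$, the resulting quantity stays below $\omega_2(\ulcorner\Lambda\urcorner)$; everything then reduces to a small arithmetical point about how large a factorial can be inside $\log\log\ulcorner\Lambda\urcorner$. Work in the ambient model $M$, which satisfies ${\rm I\Delta_0}$; write $\lambda=\ulcorner\Lambda\urcorner$ and $n=\log\log\lambda$, so that $\omega_2(\lambda)=\exp^2(n\cdot n)=\exp^2(n^2)$ and hence $\log\omega_2(\lambda)=\exp(n^2)$. By Lemma~\ref{lambdacode}(2), together with $|\Lambda|\leqslant\log\lambda$, for every $j$ for which $(j+1)!$ exists we get $\ulcorner\Lambda^{\langle j\rangle}\urcorner\leqslant{\mathcal P}\big(\lambda^{(\log\lambda)^{(j+1)!}}\big)$. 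Applying $\log$ twice and using $\log(a^b)\leqslant b\log a$, we obtain $\log\log\big(\lambda^{(\log\lambda)^{(j+1)!}}\big)\leqslant\big((j+1)!+1\big)\cdot n$. So as soon as $(j+1)!+1\leqslant n$, this is $\leqslant n^2$, which gives $\lambda^{(\log\lambda)^{(j+1)!}}\leqslant\exp^2(n^2)=\omega_2(\lambda)$, and therefore $\ulcorner\Lambda^{\langle j\rangle}\urcorner\leqslant{\mathcal P}(\omega_2(\lambda))$, an element of $M$ since $\omega_2(\lambda)$ is and ${\mathcal P}$ only raises to a standard power. (The same estimate at each intermediate stage $k\leqslant j$ shows the inductive construction of $\Lambda^{\langle j\rangle}$ never overflows.) Thus the whole theorem reduces to exhibiting a \emph{nonstandard} $j$ with $(j+1)!+1\leqslant n=\log\log\lambda$.

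For this, note first that $n=\log\log\lambda$ is itself nonstandard: the nonstandard $\lambda$ exceeds the standard number $\exp^2(k)$ for every standard $k$, so $n\geqslant k$ for all standard $k$. Now consider $X=\{k:(k+1)!+1\leqslant n\}$. Since the graph of the factorial function is $\Delta_0$-definable, membership $k\in X$ is equivalent to the bounded formula $\exists y\leqslant n\,\big(y=(k+1)!\,\wedge\,y+1\leqslant n\big)$; hence $X$ is $\Delta_0$-definable, and it is plainly downward closed and nonempty ($0\in X$, as $1!+1=2\leqslant n$). For each standard $k$ the quantity $(k+1)!+1$ is standard, hence below the nonstandard $n$, so $X$ contains all standard natural numbers. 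By $\Delta_0$-overspill---available because ${\rm I\Delta_0}$ proves the least-number principle for $\Delta_0$ formulas---the set $X$ contains a nonstandard element $j$. For that $j$, $(j+1)!\leqslant n$ exists, so the estimate of the first paragraph applies and yields that $\ulcorner\Lambda^{\langle j\rangle}\urcorner$ exists, as required. (The side hypothesis of Lemma~\ref{lambdacode}, that $\ulcorner\Lambda\urcorner$ and $|\Lambda|$ be large relative to $j$, is met with room to spare, since $j<n=\log\log\ulcorner\Lambda\urcorner$ forces $\ulcorner\Lambda\urcorner>\exp^2(j)$, and $|\Lambda|$ may be replaced throughout by its upper bound $\log\ulcorner\Lambda\urcorner$.)

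I expect the only step with genuine content to be the middle one---extracting an honestly nonstandard $j$ while keeping $(j+1)!$ still (polynomially) inside $\log\log\ulcorner\Lambda\urcorner$. This is precisely the ``logarithmic slack'' that the hypothesis ``$\omega_2(\ulcorner\Lambda\urcorner)$ exists'' is there to buy: the two nested $\exp$/$\log$ pairs concealed in $\omega_2$ match exactly the doubly-exponential growth of $\ulcorner\Lambda^{\langle j\rangle}\urcorner$ in $j$, whose exponent-of-an-exponent is controlled, via Lemma~\ref{lambdacode}, by $(j+1)!$. All the remaining ingredients---totality of $\log$, downward closure of cuts, $\Delta_0$-definability of the factorial graph, and the routine bookkeeping with the ${\mathcal P}$ notation and with the rounding implicit in $\log$---go through in ${\rm I\Delta_0}$ without incident.
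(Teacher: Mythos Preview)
Your proof is correct and follows essentially the same route as the paper: both apply Lemma~\ref{lambdacode}(2) together with $|\Lambda|\leqslant\log\ulcorner\Lambda\urcorner$ to bound $\ulcorner\Lambda^{\langle j\rangle}\urcorner$ by ${\mathcal P}(\omega_2(\ulcorner\Lambda\urcorner))$ once $(j+1)!$ is kept below (roughly) $\log^2\ulcorner\Lambda\urcorner$. The only cosmetic difference is in how the nonstandard $j$ is selected: the paper simply takes any nonstandard $j\leqslant\log^4\ulcorner\Lambda\urcorner$ and uses $2(j+1)!\leqslant 2^{2^j}\leqslant\log^2\ulcorner\Lambda\urcorner$, whereas you invoke $\Delta_0$-overspill on the condition $(j+1)!+1\leqslant\log^2\ulcorner\Lambda\urcorner$; both devices produce the same $j$ and feed into the identical estimate.
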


\begin{proof}
There must exist a non-standard $j$ such that $j\leqslant\log^4(
\ulcorner\Lambda\urcorner)$. Thus $2(j+1)!\leqslant
2^{2^j}\leqslant\log^2\ulcorner\Lambda\urcorner$. Now, by Lemma~\ref{lambdacode} we can write
$\ulcorner\Lambda^{\langle
j\rangle}\urcorner\leqslant{\mathcal P}\left((\ulcorner\Lambda\urcorner)^{|\Lambda|^{(j+1)!}}\right)\leqslant{\mathcal P}
\left((2^{2\log\ulcorner\Lambda\urcorner})^{(\log\ulcorner\Lambda\urcorner)^{
(j+1)!}}\right)\leqslant$

\noindent ${\mathcal P}\left(\exp((\log\ulcorner\Lambda\urcorner)^{2(j+1)!})\right)
\leqslant{\mathcal P}\left(\exp(\omega_1(\log\ulcorner\Lambda\urcorner))\right)$,
 or in other words
$\ulcorner\Lambda^{\langle
j\rangle}\urcorner\leqslant{\mathcal P}\left(\omega_2(\ulcorner\Lambda\urcorner)\right)$.
\end{proof}

The reason that Theorem \ref{log4} is stated for non-standard
$\Lambda$ is that the set $\Lambda^{\langle\infty\rangle}$, needed
for constructing the model ${\textswab M}(\Lambda,p)$,  is not
definable in ${\mathcal L}_A$. But the existence of the definable
$\Lambda^{\langle j\rangle}$ for a non-standard $j$ can guarantee
the existence of $\Lambda^{\langle\infty\rangle}$ and thus of
${\textswab M}(\Lambda,p)$. This non-standard $j$ exists for
non-standard $\ulcorner\Lambda\urcorner$.

\subsection{The Main Theorem for ${\rm I\Delta_0+\Omega_1}$}
Two interesting theorems were proved by Z. Adamowicz in \cite{Ada02} about Herbrand Consistency of the theories  ${\rm I\Delta_0+\Omega_m}$ for $m\geqslant 2$:

\begin{theorem}[Z. Adamowicz  \cite{Ada02}]\label{th1}
For a bounded formula $\theta(\overline{x})$ and $m\geqslant 2$, if the theory
{$({\rm I\Delta_0+\Omega_m}) \,+\, \exists \overline{x}\!\in\!\ell\!{\it og}^{m+1}\theta(\overline{x}) \,+\, {\rm HCon}_{\ell\!{\it og}^{m-2}}({\rm I\Delta_0+\Omega_m})$} is consistent, then so is the theory {$({\rm I\Delta_0+\Omega_m})  \,+\, \exists \overline{x}\!\in\!\ell\!{\it og}^{m+2}\theta(\overline{x}),$} where ${\rm HCon}_{\ell\!{\it og}^{m-2}}$ is the relativization of ${\rm HCon}$ to the cut $\ell\!{\it og}^{m-2}$.\hfill $\subset\!\!\!\!\supset$
\end{theorem}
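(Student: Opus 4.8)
This is a ``consistency of $A$ implies consistency of $B$'' statement, so I would argue model-theoretically, producing a model of $B$ inside a model of $A$ through the Herbrand-model apparatus just developed. Fix a model $M$ of $({\rm I\Delta_0+\Omega_m})+\exists\overline x\in\ell\!{\it og}^{m+1}\theta(\overline x)+{\rm HCon}_{\ell\!{\it og}^{m-2}}({\rm I\Delta_0+\Omega_m})$; write $T={\rm I\Delta_0+\Omega_m}$; fix $\overline a\in(\ell\!{\it og}^{m+1})^M$ with $M\models\theta(\overline a)$ (we may assume $\overline a$ non-standard, else the conclusion is trivial); put $a=\max\overline a$; and let $\dot a$ be the corresponding tuple of numerals, with $\ulcorner\dot a\urcorner\leqslant{\mathcal P}(a)$. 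The target is a Herbrand model $N={\textswab M}(\Lambda,p)$ with $N\models T$, $N\models\theta(\dot a/p)$, and $\dot a/p\in(\ell\!{\it og}^{m+2})^N$.

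Everything hinges on the choice of the term set $\Lambda$ (definable in $M$), which must meet three competing demands. \emph{Soundness for $\theta$:} $\Lambda$ should contain $\dot a$, every term occurring in the bounded unwinding of the decidable formula $\theta(\dot a)$, and every term needed for the ${\rm Q}$-Herbrand derivations of the numeral identities among those terms (cf.\ Example~\ref{exampleA} and Remark~\ref{remark1}); this forces every $T$-evaluation on a superset of $\Lambda$ to decide the relevant atomic facts just as $M$ does. \emph{An exponential tower:} $\Lambda$ should additionally carry numerals for a few iterated exponentials of $a$ (these exist in $M$ because $\overline a\in\ell\!{\it og}^{m+1}$) together with Skolem terms for the $\Delta_0$-graph of exponentiation and for $\Omega_m$, arranged so that the resulting Herbrand model displays a chain $\dot a/p,\exp(\dot a/p),\dots,\exp^{m+2}(\dot a/p)$ of $m+2$ iterated exponentiations. \emph{Size budget:} $\ulcorner\Lambda\urcorner$ should be non-standard but small enough that the non-standard $j$ furnished by Theorem~\ref{log4} (available since $m\geqslant 2$ makes $\omega_2$ total) satisfies $\ulcorner\Lambda^{\langle j\rangle}\urcorner\leqslant{\mathcal P}(\omega_2(\ulcorner\Lambda\urcorner))\in\ell\!{\it og}^{m-2}$; taking $\ulcorner\Lambda\urcorner$ of size about $\exp^2(a)$ works, for then $\exp^{m-2}(\omega_2(\ulcorner\Lambda\urcorner))$ is dominated by $\exp^{m+1}(a)$, which exists (when $m=2$ the cut $\ell\!{\it og}^{0}$ is all of $M$ and only the totality of $\omega_2$ is at issue).

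Granted such a $\Lambda$, I would apply ${\rm HCon}_{\ell\!{\it og}^{m-2}}(T)$ to $\Lambda^{\langle j\rangle}$, whose code lies in $\ell\!{\it og}^{m-2}$, obtaining a $T$-evaluation $p$ on $\Lambda^{\langle j\rangle}$; as $j$ is non-standard, $\Lambda^{\langle\infty\rangle}\subseteq\Lambda^{\langle j\rangle}$, so the restriction of $p$ is a $T$-evaluation on $\Lambda^{\langle\infty\rangle}$, and Lemma~\ref{lm:mlambdap} gives $N={\textswab M}(\Lambda,p)\models T$. The soundness demand, with $N\models{\rm I\Delta_0}$ and the absoluteness of bounded formulas, yields $N\models\theta(\dot a/p)$; the tower demand yields $\dot a/p\in(\ell\!{\it og}^{m+2})^N$. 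Thus $N\models T+\exists\overline x\in\ell\!{\it og}^{m+2}\theta(\overline x)$, so that theory is consistent.

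The hard part — where I expect essentially all the difficulty to concentrate — is the tower demand together with the last transfer: arranging that a Herbrand model built from a term set whose code is confined to $\ell\!{\it og}^{m-2}$ nevertheless carries the witness one $\ell\!{\it og}$-level \emph{deeper} than it sat in $M$. Numerals for $\exp^{m+2}(a)$ are not available in $M$, so the top of the chain has to be manufactured from Skolem terms that the axioms of $T$ — chiefly $\Omega_m$ and the bounded (``$\Delta_0$'') definability of exponentiation below a given bound — are compelled to interpret correctly, and carrying this out inside the size budget of the third demand is exactly Adamowicz's logarithmic-shrinking manoeuvre; iterating or localizing the construction may well be needed here. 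The remaining items — the coding and cut-closure estimates underlying the size budget (Lemmas~\ref{codeomega1} and \ref{lambdacode}), the non-standard $j$ (Theorem~\ref{log4}), and the routine absoluteness bookkeeping for $\theta$ — should go through without trouble.
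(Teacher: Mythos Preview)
The paper does not prove Theorem~\ref{th1}: it is quoted from Adamowicz~\cite{Ada02} with no proof (note the closing $\subset\!\!\!\!\supset$). The paper's own contribution is the analogue Theorem~\ref{O1H} for $m=1$, whose proof provides the template against which your proposal should be compared.

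Your overall strategy matches that template: start from a model ${\mathcal M}$, build a carefully chosen term set $\Lambda$, apply Theorem~\ref{log4} and the Herbrand-consistency hypothesis to obtain an evaluation $p$ on $\Lambda^{\langle j\rangle}$ for non-standard $j$, and pass to ${\mathcal N}={\textswab M}(\Lambda,p)$. The transfer of $\theta$ also matches in spirit, though the paper is more systematic: rather than unwinding $\theta(\dot a)$ ad hoc, it puts \emph{all} numerals $\underline{0},\ldots,\underline{\omega_1(\alpha)}$ into $\Lambda$ and shows (Lemmas~\ref{lemm1}--\ref{lem6}, Theorem~\ref{bounded}) that $i\mapsto\underline{i}/p$ is an isomorphism of initial segments, transferring every bounded formula with small parameters at once.

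The genuine gap is your ``exponential tower.'' Your plan --- numerals for a few iterated exponentials of $a$ plus a standard-length chain $\dot a/p,\exp(\dot a/p),\ldots,\exp^{m+2}(\dot a/p)$ topped off by a Skolem term --- does not work and is inconsistent with your own size budget: the numeral $\underline{\exp^{m+1}(a)}$ alone has code $\approx\exp^{m+1}(a)\gg\exp^2(a)$. More fundamentally, a single application of the $\Omega_m$ Skolem symbol does not yield one more exponential; $\omega_m(\exp^{m+1}(a))\approx\exp^{m+1}(2a)$, not $\exp^{m+2}(a)$. The actual device (implemented in the paper for $m=1$ and by Adamowicz for $m\geqslant 2$) is different in kind: one places in $\Lambda$ a \emph{non-standard-length} sequence of terms ${\sf w}_0,\ldots,{\sf w}_\alpha$ with ${\sf w}_0$ a fixed small numeral and ${\sf w}_{j+1}={\mathfrak w}({\sf w}_j)$, where ${\mathfrak w}={\mathfrak f}_{\exists y[y=\omega_m(x)]}$. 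The identity $\omega_m(\exp^{m+2}(j))=\exp^{m+2}(j+1)$ makes ${\sf w}_\alpha$ represent $\exp^{m+2}(\alpha)$, and since $\ulcorner{\sf w}_j\urcorner$ is linear in $j$, the whole sequence fits the budget; the elements ${\sf w}_j/p$ in ${\mathcal N}$ then witness $\underline{\alpha}/p\in\ell\!{\it og}^{m+2}$. It is this $\alpha$-fold iteration of a Skolem symbol starting from a \emph{standard} seed --- not a standard-length chain of exponentials starting from $a$ --- that manufactures the extra $\ell\!{\it og}$. Your closing paragraph gestures toward ``iterating,'' but iterates the wrong thing (the construction rather than the Skolem function) and does not identify this mechanism.
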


\begin{theorem}[Z. Adamowicz  \cite{Ada02}]\label{th2}
For any natural $m,n\geqslant 0$ there exists a bounded formula $\eta(\overline{x})$ such that {$({\rm I\Delta_0+\Omega_m}) \,+\, \exists \overline{x}\!\in\!\ell\!{\it og}^{n}\eta(\overline{x})$} is consistent, but  {$({\rm I\Delta_0+\Omega_m}) \,+\, \exists \overline{x}\!\in\!\ell\!{\it og}^{n+1}\eta(\overline{x})$} is not consistent.\hfill $\subset\!\!\!\!\supset$
\end{theorem}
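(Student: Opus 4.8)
The plan begins with an elementary reduction. The requirement that $({\rm I\Delta_0+\Omega_m})+\exists\overline{x}\!\in\!\ell\!{\it og}^{n+1}\eta(\overline{x})$ be inconsistent already forces $\eta$ to be refuted in the standard model: in $\mathbb{N}$ every cut $\ell\!{\it og}^{j}$ is the whole model, so if $\exists\overline{x}\,\eta(\overline{x})$ held in $\mathbb{N}$ it would hold there with a witness lying in $\ell\!{\it og}^{n+1}$, contradicting that inconsistency. Hence $\pi:=\forall\overline{x}\,\neg\eta(\overline{x})$ must be a \emph{true} $\Pi_1$ sentence, and, writing $\pi^{\ell\!{\it og}^{j}}$ for its relativization to the cut $\ell\!{\it og}^{j}$, the theorem is equivalent to exhibiting such a $\pi$ with $({\rm I\Delta_0+\Omega_m})\vdash\pi^{\ell\!{\it og}^{n+1}}$ but $({\rm I\Delta_0+\Omega_m})\not\vdash\pi^{\ell\!{\it og}^{n}}$. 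So the task is to find a true $\Pi_1$ statement that ${\rm I\Delta_0+\Omega_m}$ can certify on the thin cut $\ell\!{\it og}^{n+1}$ yet provably cannot certify one cut up, on $\ell\!{\it og}^{n}$.

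For the natural candidate I would take $\pi$ to be a Herbrand (or tableau) consistency statement of ${\rm I\Delta_0+\Omega_m}$ in the formalism of Section~2, so that $\eta(\overline{x})$ is the bounded formula ``$\overline{x}$ codes a finite set $\Lambda$ of terms admitting no $({\rm I\Delta_0+\Omega_m})$-evaluation'' and $\pi^{\ell\!{\it og}^{j}}$ asserts that every $\Lambda$ with $\ulcorner\Lambda\urcorner\!\in\!\ell\!{\it og}^{j}$ does admit one. The provability half $({\rm I\Delta_0+\Omega_m})\vdash\pi^{\ell\!{\it og}^{n+1}}$ I would obtain by formalizing a truncation argument: for $\Lambda$ with $\ulcorner\Lambda\urcorner$ in the thin cut, Lemmas~\ref{codeomega1} and~\ref{lambdacode} together with Theorem~\ref{log4} show that some $\Lambda^{\langle j\rangle}$, and with it the whole finite space of candidate evaluations on it, has a code that the theory has at its disposal --- this is exactly where $\Omega_m$ is indispensable, and where the index $m$ fixes which cut counts as ``thin enough'' --- so that ${\rm I\Delta_0+\Omega_m}$ can run a bounded search and, using the ${\rm Q}$- and ${\rm I\Delta_0}$-derivations isolated in Examples~\ref{exampleA}--\ref{exampleB}, verify that at least one candidate is genuinely a $({\rm I\Delta_0+\Omega_m})$-evaluation. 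The unprovability half I would get from a Rosser-style use of the fixed point lemma, tuning $\eta$ so that its only genuine witnesses are non-standard sets $\Lambda$ sitting at cut-level exactly $n$; then no model of ${\rm I\Delta_0+\Omega_m}$ refutes the existence of such a $\Lambda$ inside $\ell\!{\it og}^{n}$, while by the truncation argument the theory does refute it inside $\ell\!{\it og}^{n+1}$.

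The delicate point, and the step I expect to be the real obstacle, is the calibration of these two cut-levels: one must show at once that the resources of ${\rm I\Delta_0+\Omega_m}$ --- namely $\Omega_m$, a bounded search, and the weak $\Sigma_1$-completeness used in the examples of Section~2 --- are \emph{just} enough to produce evaluations for every $\Lambda$ coded in $\ell\!{\it og}^{n+1}$, and that the diagonalization \emph{genuinely} blocks this precisely one $\ell\!{\it og}$-level higher, at $\ell\!{\it og}^{n}$, without incidentally proving the unrelativized unprovability of ${\rm HCon}({\rm I\Delta_0+\Omega_m})$ (which would make the present theorem circular with respect to the use it is put to in Section~3). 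A convenient way to organize the consistency half is model-theoretic: build $M\models{\rm I\Delta_0+\Omega_m}$ in which $\ell\!{\it og}^{n+1}$ is a \emph{proper} subcut of $\ell\!{\it og}^{n}$, place a witness of $\eta$ in the resulting gap, and use a reflection clause built into $\eta$ to rule out the same configuration with the witness pushed down into $\ell\!{\it og}^{n+1}$. Controlling the separation between two consecutive $\ell\!{\it og}$-cuts inside a model of ${\rm I\Delta_0+\Omega_m}$ is then the technical core.
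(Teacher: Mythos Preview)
First, note that the paper does not prove this statement: Theorem~\ref{th2} is quoted from Adamowicz~\cite{Ada02} and used as a black box, so there is no in-paper argument to compare against. I comment on your plan relative to what the paper tells us about the original.

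Your opening reduction is correct: the task is exactly to exhibit a true $\Pi_1$ sentence $\pi$ with $({\rm I\Delta_0+\Omega_m})\vdash\pi^{\ell\!{\it og}^{n+1}}$ but $({\rm I\Delta_0+\Omega_m})\not\vdash\pi^{\ell\!{\it og}^{n}}$. The genuine gap is in your ``provability half'' for the choice $\pi={\rm HCon}({\rm I\Delta_0+\Omega_m})$. You argue that since Lemma~\ref{codeomega1} bounds the code of any evaluation, the theory can enumerate all candidates on a small $\Lambda$ and ``verify that at least one candidate is genuinely a $({\rm I\Delta_0+\Omega_m})$-evaluation''. But bounded enumeration only lets the theory \emph{test} each candidate; it does not certify that \emph{some} candidate passes. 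That last step is a reflection principle for ${\rm I\Delta_0+\Omega_m}$ over itself, and Examples~\ref{exampleA}--\ref{exampleB} do not supply one: they show how particular derivations \emph{constrain} an evaluation already assumed to exist, not that a satisfying evaluation exists for an arbitrary term set. To build the ``true'' evaluation directly one would have to compute values of all Skolem terms in $\Lambda$ --- including iterates of the Skolem symbol for $\Omega_m$ --- and those values need not lie in any cut the theory controls. So $({\rm I\Delta_0+\Omega_m})\vdash{\rm HCon}^{\ell\!{\it og}^{n+1}}({\rm I\Delta_0+\Omega_m})$ is unjustified by your sketch and not something to expect for general~$n$.

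The circularity you flag is also unavoidable on this route. Section~3 invokes Theorem~\ref{th2} precisely to obtain the unprovability of (cut-restricted) ${\rm HCon}$; taking $\eta$ to be a Rosser variant of Herbrand inconsistency makes your ``unprovability half'' presuppose what the theorem is meant to feed. Adamowicz's construction in~\cite{Ada02} sidesteps both issues by building $\eta$ through a direct self-referential fixed point that does not mention Herbrand or tableau consistency at all; as the paper itself remarks in Section~4.2.2, the only property of the cuts the proof uses is $2^x\!\in\!\ell\!{\it og}^n \Leftrightarrow x\!\in\!\ell\!{\it og}^{n+1}$. That is the construction to aim for, not a detour through~${\rm HCon}$.
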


\noindent These two theorems (by putting $n=m+1$ for $m\geqslant 2$) imply together that for any $m\geqslant 2:$
 \newline\centerline{
 ${\rm I\Delta_0+\Omega_m}\not\vdash{\rm HCon}_{\ell\!{\it og}^{m-2}}({\rm I\Delta_0+\Omega_m}).$}

Here we extend Theorem \ref{th1} for ${\rm I\Delta_0+\Omega_1}$, namely we show that

\begin{theorem}\label{O1H}
For any bounded formula $\theta(x)$, the consistency of the theory
{$({\rm I\Delta_0+\Omega_1}) \,+\, \exists
x\!\!\in\!\!\ell\!{\it og}^2\theta(x) \,+\, {\rm HCon}({\rm I\Delta_0+\Omega_1})$} implies the consistency of
 the theory
 {$({\rm I\Delta_0+\Omega_1}) \,+\, \exists
x\!\!\in\!\!\ell\!{\it og}^3\theta(x)$.}
\end{theorem}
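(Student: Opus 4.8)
The plan is to argue model-theoretically, paralleling Adamowicz's proof of Theorem \ref{th1} but specialised to $m=1$. I would begin with a model $M$ of $({\rm I\Delta_0+\Omega_1}) + \exists x\!\in\!\ell\!{\it og}^2\theta(x) + {\rm HCon}({\rm I\Delta_0+\Omega_1})$ and fix some $a\in M$ with $\exp^2(a)$ existing in $M$ and $M\models\theta(a)$; one may assume $a$ nonstandard, since otherwise $M$ already satisfies $\exists x\!\in\!\ell\!{\it og}^3\theta(x)$ and there is nothing to prove. The goal is to build, definably inside $M$, a Herbrand model ${\textswab M}(\Lambda,p)$ of ${\rm I\Delta_0+\Omega_1}$ in which some element satisfies $\theta$ and lies in $\ell\!{\it og}^3$; by Herbrand's theorem this produces a model, hence the consistency, of $({\rm I\Delta_0+\Omega_1}) + \exists x\!\in\!\ell\!{\it og}^3\theta(x)$.

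The heart of the matter is the design of the set of terms $\Lambda$, which must meet three competing requirements at once. First, $\Lambda$ (equivalently $\Lambda^{\langle\infty\rangle}$) should carry efficiently coded representatives of $a$, of $\exp(a)$ and of $\exp^2(a)$ — the crucial point, and the place where the efficient G\"odel coding of Chapter V of \cite{HP98} does its work, is that these are affordable precisely because $\exp^2(a)$ exists in $M$ — together with the short $\Delta_0$-checkable certificates linking these values and the numerals needed to decide the bounded quantifiers of $\theta(\underline a)$, so that the Herbrand model correctly evaluates $\theta$ at the class of $\underline a$ (here one has to be careful about the spurious small elements that the Skolem closure can introduce). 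Secondly, those same ingredients, together with the totality inside the Herbrand model of the Skolem function for ${\rm \Omega_1}$, must push the witness one logarithmic level down, so that the class of $\underline a$ falls inside $\ell\!{\it og}^3$ rather than merely $\ell\!{\it og}^2$ of ${\textswab M}(\Lambda,p)$ — this is exactly the ``logarithmic shrinking of the witness'' of \cite{Ada02}. Thirdly, despite all this, $\ulcorner\Lambda\urcorner$ must stay below $\exp^2(\sqrt{a})$, so that — by the product and concatenation bounds recalled just before Lemma \ref{codeomega1} — $\omega_2(\ulcorner\Lambda\urcorner)$ is bounded by $\exp^2(a)$ and therefore exists in $M$.

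Granting such a $\Lambda$, the remainder runs on the rails laid in Sections 2 and 3: by Theorem \ref{log4} there is a nonstandard $j$ for which $\ulcorner\Lambda^{\langle j\rangle}\urcorner$ exists, so $\Lambda^{\langle\infty\rangle}$ is available; by ${\rm HCon}({\rm I\Delta_0+\Omega_1})$ there is an $({\rm I\Delta_0+\Omega_1})$-evaluation $p$ on $\Lambda^{\langle\infty\rangle}$; by Lemma \ref{lm:mlambdap}, ${\textswab M}(\Lambda,p)\models{\rm I\Delta_0+\Omega_1}$; and by the design of $\Lambda$ the class of $\underline a$ witnesses $\exists x\!\in\!\ell\!{\it og}^3\theta(x)$ there. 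The fact that $p$ computes arithmetic faithfully on the numerals of $\Lambda$, so that atomic and bounded facts true in $M$ below the relevant bound transfer to the Herbrand model, is the phenomenon already illustrated in Example \ref{exampleA} and Remark \ref{remark1}; the size estimates for $\Lambda^{\langle j\rangle}$ and for $p$ are Lemmas \ref{lambdacode} and \ref{codeomega1}.

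The main obstacle is the balancing act of the second paragraph. In \cite{Ada02} one works with $\Omega_m$ for $m\geqslant 2$, so $\omega_2$ is total and the third requirement is free; here the only resource available is the single tower $\exp^2(a)$ handed over by the hypothesis $a\in\ell\!{\it og}^2$, and the layout of $\Lambda$ must be squeezed to fit inside it. This is the ``major difference being the coding techniques'' referred to in the introduction, and it is precisely here that the more liberal, coding-independent definition of ${\rm HCon}$ adopted in this paper — in place of the coding-specific one of \cite{Ada01,Ada02} — earns its keep, by allowing enough freedom in how the terms of $\Lambda$ are organised; verifying that a $\Lambda$ reconciling all three requirements really exists is the step I expect to be delicate, the rest being bookkeeping of the kind already developed above.
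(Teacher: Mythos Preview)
Your outline is correct and matches the paper's approach. One point to sharpen: the talk of ``efficiently coded representatives of $\exp(a)$ and $\exp^2(a)$'' and ``$\Delta_0$-checkable certificates linking these values'' is off-target --- no term of $\Lambda$ denotes $\exp(a)$ or $\exp^2(a)$ in $M$, and none is needed, nor does the existence of $\exp^2(a)$ in $M$ make such terms affordable. The paper simply takes $\Lambda=\{\underline{j}\mid j\leqslant\omega_1(\alpha)\}\cup\{{\sf w}_j\mid j\leqslant\alpha\}$, where ${\sf w}_0=\underline{4}$, ${\sf w}_{j+1}={\mathfrak w}({\sf w}_j)$, and ${\mathfrak w}={\mathfrak f}_{\exists y(y=\omega_1(x))}$; the ${\sf w}_j$ are pure Skolem terms of code ${\mathcal P}(2^j)$ that denote nothing in $M$, but inside ${\textswab M}(\Lambda,p)$ the identity $\omega_1(\exp^3(j))=\exp^3(j+1)$ forces ${\sf w}_\alpha/p$ to be $\exp^3(\underline{\alpha}/p)$, so $\underline{\alpha}/p\in\ell\!{\it og}^3$ there. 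The numerals up to $\omega_1(\alpha)$ serve the separate purpose of transferring $\theta$: this is your ``bookkeeping'', but it is more than the phenomenon of Example~\ref{exampleA} --- it is carried out as Lemmas~\ref{lemm1}--\ref{lem6} and Theorems~\ref{open}--\ref{bounded}, which exhibit an isomorphism between the initial segments $\langle[0,\alpha]\rangle_{\mathcal M}$ and $\langle[\underline{0}/p,\underline{\alpha}/p]\rangle_{\mathcal N}$ and constitute the bulk of the written argument.
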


The rest of this section is devoted to proving this theorem. Let us note that Theorem \ref{th2} holds already for ${\rm I\Delta_0+\Omega_1}$, and below we reiterate the part that we need here:

\begin{theorem}[Z. Adamowicz  \cite{Ada02}]
There exists a bounded formula $\eta(\overline{x})$ such that the arithmetical theory {$({\rm I\Delta_0+\Omega_1}) \,+\, \exists \overline{x}\!\in\!\ell\!{\it og}^{2}\eta(\overline{x})$} is consistent, but  {$({\rm I\Delta_0+\Omega_1}) \,+\, \exists \overline{x}\!\in\!\ell\!{\it og}^{3}\eta(\overline{x})$} is not consistent.\hfill $\subset\!\!\!\!\supset$
\end{theorem}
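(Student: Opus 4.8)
The plan is to read off the statement as the special case $m=1$, $n=2$ of Theorem~\ref{th2}. As the text already observes, that theorem holds for ${\rm I\Delta_0+\Omega_1}$ as well (the proof in \cite{Ada02} does not use $m\geqslant 2$), so there is nothing to do beyond unwinding the specialization. Concretely: (i) apply Theorem~\ref{th2} with $m=1$ and $n=2$ to obtain a bounded formula $\eta(\overline{x})$; (ii) for this $\eta$ the theory $({\rm I\Delta_0+\Omega_1})+\exists\overline{x}\!\in\!\ell\!{\it og}^{2}\eta(\overline{x})$ is consistent while $({\rm I\Delta_0+\Omega_1})+\exists\overline{x}\!\in\!\ell\!{\it og}^{3}\eta(\overline{x})$ is not; (iii) note that $\ell\!{\it og}^{2}$ and $\ell\!{\it og}^{3}$ are genuine proper cuts of ${\rm I\Delta_0+\Omega_1}$ with $\ell\!{\it og}^{3}\subseteq\ell\!{\it og}^{2}$, so both quantifier ranges are meaningful. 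Thus we are only isolating the one instance of Theorem~\ref{th2} that will be fed into the proof of Theorem~\ref{O1H}.

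For orientation, recall the shape of Adamowicz's construction behind Theorem~\ref{th2}. One builds $\eta$ by a G\"odel-style diagonalization so that the assertion ``$\eta$ has a witness $\overline{x}$ lying in $\ell\!{\it og}^{n}$'' codes a suitably weak, cut-relativized sentence that ${\rm I\Delta_0+\Omega_m}$ verifies on the cut $\ell\!{\it og}^{n}$, but which, if it held on the strictly shorter cut $\ell\!{\it og}^{n+1}$, could be pushed further down by a Solovay-style shortening of definable cuts (using the closure properties of $\ell\!{\it og}^{\bullet}$ under the operations available in ${\rm I\Delta_0+\Omega_m}$) into the totality of a faster growing function --- essentially ${\rm \Omega_{m+1}}$ --- or directly into an inconsistency, contradicting the facts that ${\rm I\Delta_0+\Omega_m}$ is consistent and does not prove ${\rm \Omega_{m+1}}$. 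The inconsistency of the $\ell\!{\it og}^{n+1}$-version is then a formalized derivation, while the consistency of the $\ell\!{\it og}^{n}$-version is secured by exhibiting a model in which a witness of the required magnitude exists. Nothing in this degenerates at $m=1$, $n=2$.

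The points that do require care here are purely bookkeeping, and none of them is a real obstacle. First, the quantity $m-2$ appearing in the relativization $\ell\!{\it og}^{m-2}$ of Theorem~\ref{th1} would become $\ell\!{\it og}^{-1}$, i.e. the whole model, at $m=1$; this is precisely why Theorem~\ref{O1H} is phrased with the \emph{unrelativized} predicate ${\rm HCon}({\rm I\Delta_0+\Omega_1})$, and it has no bearing on the statement being proved here. Second, the tuple $\overline{x}$ can be replaced by a single variable via the polynomially bounded sequence coding of Section~2, matching the single-variable phrasing used for $\theta$ in Theorem~\ref{O1H}. Consequently the present statement itself carries no genuine difficulty; the substantive work of the section is Theorem~\ref{O1H}, where this unconditional inconsistency is combined with the hypothesis ${\rm HCon}({\rm I\Delta_0+\Omega_1})$ via the definable Herbrand model ${\textswab M}(\Lambda,p)$ of Section~3.
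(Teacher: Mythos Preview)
Your proposal is correct and matches the paper's treatment: the paper gives no proof of this statement at all, simply noting that Theorem~\ref{th2} already covers the case $m=1$, $n=2$ and then ``reiterat[ing] the part that we need here'' as a cited result from \cite{Ada02}. Your additional orientation on the shape of Adamowicz's construction and the bookkeeping remarks are extra commentary beyond what the paper provides, but the core approach---specializing Theorem~\ref{th2}---is identical.
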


Having proved the main theorem (\ref{O1H}), we can immediately infer that $${\rm I\Delta_0+\Omega_1}\not\vdash{\rm HCon}({\rm I\Delta_0+\Omega_1}).$$

As the proof of Theorem \ref{O1H} is long, we will break it into a few lemmas.
First we note that  $\alpha\!\in\!\ell\!{\it og}^3$ if and only if there exists a
sequence $\langle w_0,w_1,\cdots,w_\alpha\rangle$ of length $(\alpha+1)$ such
that $w_0=\exp^3(0)=2^2$,  and
for any $j<\alpha$, $w_{j+1}=\omega_1(w_j)$. Noting that
$\omega_1(\exp^3(j))=\exp^3(j+1)$ one can then see that
$w_\alpha=\exp^3(\alpha)$, and so $\alpha\!\in\!\ell\!{\it og}^3$. This can be formalized in
${\rm I\Delta_0+\Omega_1}$ by an arithmetical formula. Note that the code of the above
sequence is bounded by ${\mathcal P}(\prod_{j=0}^{j=\alpha}w_j)\leqslant
{\mathcal P}\big(\exp(\sum_{j=0}^{j=\alpha}
\exp^2(j))\big)\leqslant{\mathcal P}\left(\exp^3(\alpha+1)\right)\leqslant
{\mathcal P}\left(\omega_1(\exp^3(\alpha))\right)$. So, in the presence of
$\Omega_1$, the existence of $\exp^3(\alpha)$ guarantees the existence of
the above sequence of $w_j$'s.

For proving Theorem \ref{O1H} let us assume that we have a model
$${\mathcal M}\models
({\rm I\Delta_0+\Omega_1})+\big(\alpha\!\in\!\ell\!{\it og}^2\wedge\theta(\alpha)\big)+{\rm HCon}({\rm I\Delta_0+\Omega_1}),$$ for some  bounded formula $\theta(x)$ and some non-standard
$\alpha\!\in\!{\mathcal M}$, and then we construct a model  $${\mathcal N}\models ({\rm I\Delta_0+\Omega_1})+\exists
x\!\!\in\!\!\ell\!{\it og}^3\theta(x).$$

If our language of arithmetic ${\mathcal L}_A$ contains the successor function ${\mathfrak s}$, then define the terms $\underline{j}$'s by induction: $\underline{0}=0$, and  $\underline{j+1}={\mathfrak s}(\underline{j})$. If ${\mathcal L}_A$ does not contain ${\mathfrak s}$, then it should have the constant $1$, and in this case we can put $\underline{j+1}=\underline{j}+1$. The term $\underline{j}$ represents the (standard or non-standard) number $j$. For
the sake of simplicity, assume ${\mathfrak w}$ denotes the Skolem function
symbol ${\mathfrak f}_{\exists y\left(y=\omega_1(x)\right)}$. Put ${\sf
w}_0=\underline{4}$ and inductively ${\sf w}_{j+1}={\mathfrak w}({\sf w}_j)$. Then
${\sf w}_k$, in the theory ${\rm I\Delta_0+\Omega_1}$, is the term which represents
$\exp^3(k)$. Finally, put
$\Lambda=\{\underline{0},\ldots,\underline{\omega_1(\alpha)},{\sf w}_0,\ldots,{\sf
w}_\alpha\}=\{\underline{j}\mid j\leqslant \omega_1(\alpha)\}\cup\{{\sf w}_j\mid j\leqslant \alpha\}$. We can now estimate an  upper bound for the code of
$\Lambda$:
$\ulcorner\Lambda\urcorner\leqslant{\mathcal P}\left(\prod_{j=1}^{j=\omega_1(\alpha)}2^j\right)
\leqslant{\mathcal P}\left(\exp(\omega_1(\alpha)^2)\right)$.

\noindent So $\Lambda$ has a code in ${\mathcal M}$ (since ${\mathcal M}\models
\alpha\!\in\!\ell\!{\it og}^2$), and moreover $\omega_2(\ulcorner\Lambda\urcorner)$
exists in ${\mathcal M}$, because $\omega_2(\ulcorner\Lambda\urcorner)\leqslant{\mathcal P}\left(\omega_2(\exp(\omega_1(\alpha)^2))\right)
\leqslant{\mathcal P}\left(\exp(\omega_1(\omega_1(\alpha)^2))\right)
\leqslant{\mathcal P}\left(\exp^2\left(4(\log\alpha)^4\right)\right)\leqslant{\mathcal P}\left(\exp^2(\alpha)\right)$.

\noindent Thus by Theorem \ref{log4} there exists a non-standard $j$
such that $\Lambda^{\langle j\rangle}$ has a code in ${\mathcal M}$. Since by the assumption above  we have
${\mathcal M}\models{\rm HCon}({\rm I\Delta_0+\Omega_1})$, then there exists an $({\rm I\Delta_0+\Omega_1})-$evaluation $p$
on $\Lambda^{\langle j\rangle}$ (in ${\mathcal M}$). Now, by what was said
after the proof of Theorem \ref{log4} one can construct the model
${\textswab M}(\Lambda,p)={\mathcal N}$. By Lemma~\ref{lm:mlambdap} we have
${\mathcal N}\models({\rm I\Delta_0+\Omega_1})$, and also ${\mathcal N}\models\underline{\alpha}/p\!\in\!\ell\!{\it og}^3$ follows from the existence of ${\sf w}_j/p$'s. It remains (only) to show
that ${\textswab M}(\Lambda,p)\models\theta(\underline{\alpha}/p).$

For this purpose we prove the following lemmas where we assume that ${\mathcal M}$ is as
 above and there are some non-standard set of
terms and evaluation $\Lambda,p$ in ${\mathcal M}$ such that
$\Lambda\supseteq\{\underline{0},\ldots,\underline{\omega_1(\alpha)}\}$ for a non-standard $\alpha\!\in\!{\mathcal M}$, and $p$ is an
${\rm I\Delta_0}-$evaluation on $\Lambda^{\langle\infty\rangle}$.

\begin{lemma}\label{lemm1}
If ${\textswab M}(\Lambda,p)\models t/p\leqslant \underline{i}/p$ holds for a
term $t$ and $i\leqslant\omega_1(\alpha)$ in ${\mathcal M}$, then ${\textswab M}(\Lambda,p)\models
t/p=\underline{j}/p$  for some $j\leqslant i$.
\end{lemma}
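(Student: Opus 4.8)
The plan is to run an induction on $i$ --- carried out \emph{inside} ${\mathcal M}$, not externally, since $i$ may be non-standard --- fed by the two ${\rm Q}$-derivable facts isolated in Example~\ref{exampleA} and packaged via Remark~\ref{remark1}. Recall those: with $\psi=\forall x(x\leqslant 0\rightarrow x=0)$ and $\varphi=\forall x\forall y(x\leqslant{\mathfrak s} y\rightarrow x={\mathfrak s} y\vee x\leqslant y)$, Example~\ref{exampleA} produces, for each term $t$, a finite set $\Sigma_t$ such that every ${\rm Q}$-evaluation on $\Sigma_t$ satisfies $t\not\leqslant 0\vee t=0$, and for all terms $u,v$ a finite set $\Gamma_{u,v}$ such that every ${\rm Q}$-evaluation on $\Gamma_{u,v}$ satisfies $u\not\leqslant{\mathfrak s} v\vee u={\mathfrak s} v\vee u\leqslant v$. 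Both derivations are finite, hence formalizable (in ${\rm I\Delta_0}$, indeed in much weaker theories), so these statements hold in ${\mathcal M}$. If ${\mathcal L}_A$ lacks ${\mathfrak s}$ one replaces ${\mathfrak s} v$ by $v+1$ throughout and uses the corresponding ${\rm Q}$-derivable sentences; as noted earlier the choice of ${\mathcal L}_A$ is immaterial.

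First I would check the relevant term sets sit inside $\Lambda^{\langle\infty\rangle}$. For $t\in\Lambda^{\langle\infty\rangle}$ the set $\Sigma_t$ is obtained from $t$ and $0$ by a standard-bounded number of applications of ${\mathcal L}_A$-function symbols and of the Skolem symbols ${\mathfrak p},{\mathfrak h}$ of ${\rm Q}$'s axioms; since $\Lambda^{\langle\infty\rangle}$ is closed under all ${\mathcal L}_A$-function symbols and under ${\mathfrak f}_{\exists x\chi(x)}$ for every standard $\chi$, we get $\Sigma_t\subseteq\Lambda^{\langle\infty\rangle}$. Likewise, for $i\leqslant\omega_1(\alpha)$ we have $\underline{i}\in\Lambda\subseteq\Lambda^{\langle\infty\rangle}$, so $\Gamma_{t,\underline{i}}\subseteq\Lambda^{\langle\infty\rangle}$ by the same reasoning. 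As $p$ is an ${\rm I\Delta_0}$-evaluation on $\Lambda^{\langle\infty\rangle}$, it is in particular a ${\rm Q}$-evaluation there, so its restrictions to $\Sigma_t$ and to $\Gamma_{t,\underline{i}}$ are ${\rm Q}$-evaluations and the two facts above apply to $p$.

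With this in hand the induction is short. Consider $\Phi(i)$: \textit{``${\textswab M}(\Lambda,p)\models t/p\leqslant\underline{i}/p$ implies ${\textswab M}(\Lambda,p)\models t/p=\underline{j}/p$ for some $j\leqslant i$''}. Unwinding the definition of ${\textswab M}(\Lambda,p)$, satisfaction of an atomic formula in it is just membership of the corresponding pair in $p$, and $\ulcorner\underline{i}\urcorner$ is a $\Delta_0$-definable, suitably bounded function of $i$ for $i\leqslant\omega_1(\alpha)$; hence $\Phi(i)$ is $\Delta_0$ in the parameters $\ulcorner t\urcorner$, $\ulcorner p\urcorner$, $\alpha$ (and the non-standard index $j$ with $\ulcorner\Lambda^{\langle j\rangle}\urcorner\in{\mathcal M}$ supplied by Theorem~\ref{log4}), so ${\rm I\Delta_0}$ proves $\forall i\leqslant\omega_1(\alpha)\,\Phi(i)$ by induction on $i$. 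For $i=0$: $\underline{0}=0$ and ${\textswab M}(\Lambda,p)\models t/p\leqslant 0/p$ means $p\models t\leqslant 0$, whence $p\models t=0$ by the $\psi$-fact, i.e.\ $j=0$ works. For the step, assume $\Phi(i)$ and ${\textswab M}(\Lambda,p)\models t/p\leqslant\underline{i+1}/p$; since $\underline{i+1}={\mathfrak s}(\underline{i})$ this says $p\models t\leqslant{\mathfrak s}\underline{i}$, so by the $\varphi$-fact (with $u=t$, $v=\underline{i}$) either $p\models t={\mathfrak s}\underline{i}$, giving $j=i+1$, or $p\models t\leqslant\underline{i}$, and then $\Phi(i)$ supplies some $j\leqslant i<i+1$. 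This yields $\Phi(i+1)$ and completes the induction.

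The one point needing care --- and the only real obstacle --- is the legitimacy of running this induction inside ${\mathcal M}$: one must be sure $\Phi(i)$ genuinely is $\Delta_0$ with all invoked objects available, i.e.\ that under the efficient coding of Section~2 the codes $\ulcorner\underline{i}\urcorner$, $\ulcorner\Sigma_t\urcorner$, $\ulcorner\Gamma_{t,\underline{i}}\urcorner$, the atomic formulas $t\leqslant\underline{j}$, $t=\underline{j}$, and $\ulcorner p\urcorner$ all exist and are $\Delta_0$-manipulable in ${\mathcal M}$. This is exactly what the standing hypotheses buy: $\Lambda$ already contains $\underline{0},\ldots,\underline{\omega_1(\alpha)}$, $\omega_2(\ulcorner\Lambda\urcorner)$ exists, and $\ulcorner\Lambda^{\langle j\rangle}\urcorner$ --- hence $\ulcorner p\urcorner$, bounded via Lemma~\ref{codeomega1} --- exists for a non-standard $j$ by Theorem~\ref{log4}; so everything in sight lies comfortably below quantities present in ${\mathcal M}$, and the bounded induction goes through.
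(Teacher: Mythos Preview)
Your proof is correct and follows essentially the same route as the paper's: an induction on $i$ carried out inside ${\mathcal M}$, with the base case and successor step handled by the two ${\rm Q}$-derivable facts of Example~\ref{exampleA}. The paper's argument is terser --- it does not spell out that the relevant term sets lie in $\Lambda^{\langle\infty\rangle}$ or that the inductive statement is $\Delta_0$ --- whereas you make these points explicit; but the underlying idea is identical.
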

\begin{proof}
By the assumption ${\mathcal M}\models ``p\models t\leqslant \underline{i}"$. We prove by induction on $i$ that there exists some $j\leqslant i$ in ${\mathcal M}$ such that ${\mathcal M}\models ``p\models t=\underline{j}"$.

$\bullet$ For $i=0$ by Example \ref{exampleA}  the assumption ${\mathcal M}\models ``p\models t\leqslant 0"$ implies ${\mathcal M}\models ``p\models t=0"$, noting that $p$ is an ${\rm Q}-$evaluation on $\Lambda^{\langle\infty\rangle}$, and thus all the needed Skolem terms are in $p$'s disposal.

$\bullet$ For $i+1$ we have ${\mathcal M}\models ``p\models t\leqslant \underline{i}\bigvee t={\mathfrak s}(\underline{i})"$ by Example~\ref{exampleA}  and the assumed satisfaction  ${\mathcal M}\models ``p\models t\leqslant{\mathfrak s}(\underline{i})"$. Then if ${\mathcal M}\models ``p\models t={\mathfrak s}(\underline{i})"$ we are done, and if ${\mathcal M}\models ``p\models t\leqslant \underline{i}"$ by the induction hypothesis there must exist some $j\leqslant i$ in ${\mathcal M}$ such that ${\mathcal M}\models ``p\models t=\underline{j}"$.
\end{proof}
\begin{remark}\label{remark2}{\rm
The proof of the above lemma does not depend on the axioms of ${\rm Q}$ (and ${\rm I\Delta_0}$). Indeed, in some axiomatization of ${\rm Q}$ in the literature, the sentences \newline\centerline{$\psi=\forall x(x\leqslant 0\rightarrow x=0)$ and $\varphi=\forall x\forall y(x\leqslant{\mathfrak s} y\rightarrow x={\mathfrak s} y\vee x\leqslant y)$ (see Example \ref{exampleA})} are accepted as axioms. In our axiomatization, the above sentences were derivable theorems. In some axiomatizations of ${\rm Q}$ our axiom $A_4$ is replaced with $A_4': \forall x,y (x\leqslant y\leftrightarrow\exists z [z+x=y])$; note the difference of $x+z$ in $A_4$ and $z+x$ in $A_4'$ (see e.g. \cite{HP98}). In this new axiomatization the sentence $\varphi$ is not derivable. However, since we have ${\rm I\Delta_0}\vdash\psi\wedge\varphi$, then by the argument of Remark \ref{remark1}, the above Lemma \ref{lemm1} can be proved by using the fact that $p$ is an ${\rm I\Delta_0}-$evaluation on $\Lambda^{\langle\infty\rangle}$.
\hfill$\subset\!\!\!\!\supset$
}\end{remark}
\begin{lemma}\label{lemm2}
For  any ${\mathcal L}_A-$term $t(x_1,\ldots,x_m)$ and $i_1,\ldots,i_m\leqslant
\omega_1(\alpha)$, if ${\mathcal M}\models x\leqslant t(i_1,\ldots,i_m)$ for some $x$, then for
an ${\mathcal L}_A-$term  $t'(x_1,\ldots,x_k)$  and some $j_1,\ldots,j_k\leqslant
\omega_1(\alpha)$ we have  ${\mathcal M}\models x=t'(j_1,\ldots,j_k)$.
\end{lemma}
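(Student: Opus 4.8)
The plan is to prove Lemma~\ref{lemm2} by induction on the build-up of the (standard) ${\mathcal L}_A$-term $t$, carrying out every reduction step inside ${\mathcal M}$ with only the arithmetic available in ${\rm I\Delta_0}$ --- essentially Euclidean division together with a couple of elementary order facts. Since $t$ is a fixed term of finite height this is a finite recursion in the metatheory, even though $i_1,\dots,i_m$, $x$ and $\alpha$ may all be non-standard. It is convenient to strengthen the statement for the induction: with each ${\mathcal L}_A$-term $t(x_1,\dots,x_m)$ one associates a \emph{finite} set $S_t$ of ${\mathcal L}_A$-terms, depending only on the syntax of $t$, so that whenever $i_1,\dots,i_m\leqslant\omega_1(\alpha)$ and ${\mathcal M}\models x\leqslant t(i_1,\dots,i_m)$ there are $t'\in S_t$ and $j_1,\dots,j_k\leqslant\omega_1(\alpha)$ with ${\mathcal M}\models x=t'(j_1,\dots,j_k)$. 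Carrying $S_t$ along makes the glueing of subterms transparent, and the lemma generalises Lemma~\ref{lemm1} from numerals to arbitrary term-values.

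The base cases are immediate: if $t$ is $0$ then $x=0$, so $S_t=\{0\}$; if $1$ is in ${\mathcal L}_A$ and $t=1$ then $x\in\{0,1\}$ and $S_t=\{0,1\}$; and if $t$ is a variable $x_\ell$ then $x\leqslant i_\ell\leqslant\omega_1(\alpha)$, so $x$ is itself an admissible argument and $x=t'(x)$ for the projection term $t'(y)=y$. For $t={\mathfrak s}(t_1)$ one has either $x\leqslant t_1(i_1,\dots,i_m)$ --- apply the hypothesis for $t_1$ --- or $x={\mathfrak s}(t_1(i_1,\dots,i_m))$, so $S_t=S_{t_1}\cup\{{\mathfrak s}(t_1)\}$. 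For $t=t_1+t_2$ one splits $x=a+b$ with $a=\min(x,t_1(i_1,\dots,i_m))\leqslant t_1(i_1,\dots,i_m)$ and $b$ the unique difference with $a+b=x$, extracted inside ${\mathcal M}$ via axiom $A_4$; then $b\leqslant t_2(i_1,\dots,i_m)$, the hypothesis applies to each summand, and glueing on disjoint variables gives $S_t=\{t_1'+t_2'\mid t_1'\in S_{t_1},\ t_2'\in S_{t_2}\}$.

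The multiplicative step $t=t_1\cdot t_2$ is the only one carrying real content, and is where I expect the main obstacle to lie. If $t_1(i_1,\dots,i_m)=0$ then $x=0$; otherwise, using the division algorithm --- provable in ${\rm I\Delta_0}$, hence valid in ${\mathcal M}$ --- write $x=q\cdot t_1(i_1,\dots,i_m)+r$ with $r<t_1(i_1,\dots,i_m)$. Cancelling the positive factor $t_1(i_1,\dots,i_m)$ in $q\cdot t_1(i_1,\dots,i_m)\leqslant x\leqslant t_1(i_1,\dots,i_m)\cdot t_2(i_1,\dots,i_m)$ yields $q\leqslant t_2(i_1,\dots,i_m)$, and trivially $r\leqslant t_1(i_1,\dots,i_m)$. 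The induction hypothesis then represents $q$ by some $t_2'\in S_{t_2}$ and $r$ by some $t_1'\in S_{t_1}$, each on arguments $\leqslant\omega_1(\alpha)$, whence $x=t_2'(\cdots)\cdot t_1(i_1,\dots,i_m)+t_1'(\cdots)$; the term $t'(\overline{y},\overline{z},\overline{w})=t_2'(\overline{y})\cdot t_1(\overline{z})+t_1'(\overline{w})$, on disjoint variable blocks, together with the concatenation of the three argument tuples, witnesses the claim, so $S_t=\{0\}\cup\{t_2'\cdot t_1+t_1'\mid t_1'\in S_{t_1},\ t_2'\in S_{t_2}\}$. The point is that the large factor $t_1(i_1,\dots,i_m)$ is simply reused as part of the output term rather than re-analysed, its arguments being $\leqslant\omega_1(\alpha)$ by hypothesis. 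The only invariant left to check is that every argument occurring in any output term stays $\leqslant\omega_1(\alpha)$: each such argument is either one of the original $i_\ell$, or the witness $x$ in the variable base case (where $x\leqslant i_\ell\leqslant\omega_1(\alpha)$), or an argument handed down by the induction hypothesis. Apart from this, the proof uses only that ${\mathcal M}\models{\rm I\Delta_0}$ proves Euclidean division, cancellation of a positive factor, and extraction of differences --- all standard --- so that in fact no use of ${\rm \Omega_1}$ is made in this lemma.
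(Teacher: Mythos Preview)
Your proof is correct and follows essentially the same route as the paper's: induction on the ${\mathcal L}_A$-term $t$, with the multiplication step handled by Euclidean division inside ${\mathcal M}$ so that quotient and remainder fall under the inductive hypotheses for $t_2$ and $t_1$ respectively, and the big factor $t_1(i_1,\dots,i_m)$ is reused verbatim in the output term. The paper's write-up omits the explicit $S_t$ bookkeeping and treats the additive case by a bare case split ($x\leqslant u$ versus $u\leqslant x$) rather than your uniform $a+b$ decomposition, but these are cosmetic differences only.
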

\begin{proof} By induction on (the complexity of) the term $t$.

$\bullet$ For
$t=0$  and  $t=x_1$ the proof is straightforward.

$\bullet$ For $t={\mathfrak s} u$ the assumption ${\mathcal M}\models x\leqslant {\mathfrak s} u(i_1,\ldots,i_m)$ implies that either ${\mathcal M}\models x={\mathfrak s} u(i_1,\ldots,i_m)$ or ${\mathcal M}\models x\leqslant u(i_1,\ldots,i_m)$ is true, and then
the conclusion follows from the induction hypothesis.

$\bullet$ For $t=u+v$, and the assumption ${\mathcal M}\models x\leqslant
u(i_1,\ldots,i_m)+v(i_1,\ldots,i_m)$, we consider two cases. First
if ${\mathcal M}\models x\leqslant u(i_1,\ldots,i_m)$ then we are done by the
induction hypothesis. Second if ${\mathcal M}\models u(i_1,\ldots,i_m)\leqslant
x$ then there exists a $y$ such that ${\mathcal M}\models
x=u(i_1,\ldots,i_m)+y$ and moreover ${\mathcal M}\models y\leqslant
v(i_1,\ldots,i_m)$. Now, by the induction hypothesis there are a
term $t'(x_1,\ldots,x_k)$ and some elements $j_1,\ldots,j_k\leqslant \omega_1(\alpha)$ such that
${\mathcal M}\models y=t'(j_1,\ldots,j_k)$. Whence we  finally get the conclusion
${\mathcal M}\models x=u(i_1,\ldots,i_m)+t'(j_1,\ldots,j_k)$.

$\bullet$ For $t=u\cdot v$, by an argument similar to that of
the previous case, we can assume ${\mathcal M}\models u(i_1,\ldots,i_m)\leqslant
x\leqslant u(i_1,\ldots,i_m)\cdot v(i_1,\ldots,i_m)$. There are some $q,r$ such that ${\mathcal M}\models
x=u(i_1,\ldots,i_m)\cdot q+r$ and ${\mathcal M}\models r\leqslant
u(i_1,\ldots,i_m)$. We also have ${\mathcal M}\models q\leqslant
v(i_1,\ldots,i_m)$. By the induction hypothesis there are terms
$t',t''$ and $j_1,\ldots,j_k\leqslant \omega_1(\alpha)$ such that ${\mathcal M}\models
q=t'(j_1,\ldots,j_k) \bigwedge r=t''(j_1,\ldots,j_k)$. Thus we finally have ${\mathcal M}\models
x=u(i_1,\ldots,i_m)\cdot t'(j_1,\ldots,j_k)+t''(j_1,\ldots,j_k)$.
\end{proof}

\begin{lemma}\label{lemm3}
For $i,j,k\leqslant\omega_1(\alpha)$ in ${\mathcal M}$ we have

(1) if $i\leqslant j\leqslant\omega_1(\alpha)$ then ${\textswab M}(\Lambda,p)\models
\underline{i}/p\leqslant \underline{j}/p$ ;

(2) if $i+j\leqslant\omega_1(\alpha)$ then ${\textswab M}(\Lambda,p)\models
\underline{i}/p+\underline{j}/p=\underline{{i+j}}/p$ ;

(3) if $i\cdot j\leqslant\omega_1(\alpha)$ then ${\textswab M}(\Lambda,p)\models
\underline{i}/p\cdot\underline{j}/p=\underline{{i\cdot j}}/p$ .
\end{lemma}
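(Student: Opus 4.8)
The plan is to push everything down to the level of the evaluation $p$ and then argue by an induction carried out \emph{inside} ${\mathcal M}$. Recall from the proof of Lemma~\ref{lm:mlambdap} that for an atomic (or negated atomic) formula $A$ and terms $s_1,\ldots,s_m\in\Lambda^{\langle\infty\rangle}$ we have ${\textswab M}(\Lambda,p)\models A(s_1/p,\ldots,s_m/p)$ if and only if $p\models A(s_1,\ldots,s_m)$, and that $\backsim_p$ is a congruence. Since $\Lambda\supseteq\{\underline{0},\ldots,\underline{\omega_1(\alpha)}\}$ and $\Lambda^{\langle\infty\rangle}$ is closed under the function symbols of ${\mathcal L}_A$, every term that will occur below is at most two function applications deep over $\Lambda$ (e.g. ${\mathfrak s}(\underline{i}+\underline{j})$ or $\underline{i}\cdot\underline{j}+\underline{i}$), hence lies in $\Lambda^{\langle\infty\rangle}$; therefore the Skolem instances of the ${\rm Q}$-axioms $A_4$--$A_8$ that we invoke are available in $\Lambda^{\langle\infty\rangle}$ and so are satisfied by $p$ (being an ${\rm I\Delta_0}$-, hence a ${\rm Q}$-, evaluation; here $p$ is taken to be the coded evaluation produced in the discussion preceding Lemma~\ref{lemm1}, whose domain contains $\Lambda^{\langle\infty\rangle}$). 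Thus claims (1), (2), (3) reduce, respectively, to the statements ``$p\models\underline{i}\leqslant\underline{j}$'', ``$p\models\underline{i}+\underline{j}=\underline{i+j}$'', and ``$p\models\underline{i}\cdot\underline{j}=\underline{i\cdot j}$'' holding in ${\mathcal M}$. I would prove (2) first, then derive (3) and (1) from it.

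For (2), work inside ${\mathcal M}$ and induct on $j\leqslant\omega_1(\alpha)$ on the formula $\Phi(j)\equiv\forall i\leqslant\omega_1(\alpha)\,\bigl(i+j\leqslant\omega_1(\alpha)\rightarrow p\models\underline{i}+\underline{j}=\underline{i+j}\bigr)$. The base case $\Phi(0)$ is the Skolem instance $\underline{i}+0=\underline{i}$ of $A_5$. For the step, $\underline{j+1}={\mathfrak s}\,\underline{j}$, so the Skolem instance $\underline{i}+{\mathfrak s}\,\underline{j}={\mathfrak s}(\underline{i}+\underline{j})$ of $A_6$ gives $p\models\underline{i}+\underline{j+1}={\mathfrak s}(\underline{i}+\underline{j})$; the hypothesis $\Phi(j)$ (applicable since $i+j\leqslant i+(j+1)\leqslant\omega_1(\alpha)$) gives $p\models\underline{i}+\underline{j}=\underline{i+j}$, whence by congruence $p\models{\mathfrak s}(\underline{i}+\underline{j})={\mathfrak s}\,\underline{i+j}=\underline{i+j+1}$, and chaining (transitivity of $\backsim_p$) yields $\Phi(j+1)$. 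Part (3) is the same kind of induction on $j$: the base uses $\underline{i}\cdot 0=0$ ($A_7$) and the step uses $\underline{i}\cdot{\mathfrak s}\,\underline{j}=\underline{i}\cdot\underline{j}+\underline{i}$ ($A_8$) together with the induction hypothesis, congruence, and part (2) applied to $\underline{i\cdot j}$ and $\underline{i}$ (legitimate because $i\cdot j+i=i\cdot(j+1)\leqslant\omega_1(\alpha)$). Finally (1): given $i\leqslant j\leqslant\omega_1(\alpha)$, part (2) gives $p\models\underline{i}+\underline{j-i}=\underline{j}$, and substituting $x:=\underline{i}$, $z:=\underline{j-i}$, $y:=\underline{j}$ into the second conjunct $[x+z\not=y\vee x\leqslant y]$ of $A_4^{\rm Sk}$ then forces $p\models\underline{i}\leqslant\underline{j}$ (the variant $A_4'$ mentioned in Remark~\ref{remark2} is handled the same way, using $p\models\underline{j-i}+\underline{i}=\underline{j}$).

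The one genuinely delicate point — and the main obstacle — is to verify that these inductions are legitimate inside ${\mathcal M}$, i.e. that $\Phi(j)$ and its analogues are $\Delta_0$ formulas in parameters available in ${\mathcal M}$. For this one checks: the map $k\mapsto\ulcorner\underline{k}\urcorner$ is $\Delta_0$ with $\ulcorner\underline{k}\urcorner\leqslant{\mathcal P}(\ulcorner\Lambda\urcorner)$ for $k\leqslant\omega_1(\alpha)$ (by the efficient-coding facts recorded in Section~2 and the bound on $\ulcorner\Lambda\urcorner$ obtained before Lemma~\ref{lemm1}); the code of the atomic formula $\underline{i}+\underline{j}=\underline{i+j}$ is likewise ${\mathcal P}(\ulcorner\Lambda\urcorner)$; and ``$p\models$ (that formula)'' is simply ``$\langle(\ldots),1\rangle$ belongs to the coded finite set $p$'', which is $\Delta_0$ in $\ulcorner p\urcorner$ (recall $p$ is coded, with $\ulcorner p\urcorner\leqslant{\mathcal P}\bigl(\omega_1(\ulcorner\Lambda^{\langle j\rangle}\urcorner)\bigr)$ by Lemma~\ref{codeomega1}). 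Granting this, the remaining manipulations are entirely routine Herbrand-style computations of the type already carried out in Example~\ref{exampleA}; in particular, for each Skolem instance invoked one should, as there, verify that all of its terms lie in $\Lambda^{\langle\infty\rangle}$, which here is immediate since only one or two nested applications of ${\mathfrak s}$, $+$, $\cdot$ over $\{\underline{0},\ldots,\underline{\omega_1(\alpha)}\}$ ever arise.
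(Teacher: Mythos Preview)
Your proof is correct and follows essentially the same route as the paper: reduce to the $p$-level, prove (2) and (3) by induction on $j$ using the ${\rm Q}$-axioms $A_5$--$A_8$, and derive (1) from (2) via $A_4^{\rm Sk}$. The paper's proof is terser (it just says ``by induction on $j$, very similarly to the proof of Lemma~\ref{lemm1}''), whereas you additionally spell out why the induction formula $\Phi(j)$ is $\Delta_0$ in the coded parameters so that the induction is legitimate inside the ${\rm I\Delta_0}$-model ${\mathcal M}$ --- a point the paper leaves implicit but which is indeed needed.
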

\begin{proof}
We need to show for the  $i,j\leqslant\omega_1(\alpha)$ that

(1) if ${\mathcal M}\models i\leqslant j$ then ${\mathcal M}\models ``p\models \underline{i}\leqslant \underline{j}"$,

(2) if ${\mathcal M}\models i+j\leqslant\omega_1(\alpha)$ then ${\mathcal M}\models ``p\models \underline{i}+\underline{j}=\underline{{i+j}}"$, and

(3) if ${\mathcal M}\models i\cdot j\leqslant\omega_1(\alpha)$ then  ${\mathcal M}\models ``p\models \underline{i}\cdot\underline{j}=\underline{{i\cdot j}}"$.

\noindent First we note that the statement (2) above implies already (1), since if  we have ${\mathcal M}\models i\leqslant j$, then for some $k$ we should have ${\mathcal M}\models i+k=j$, and then by (2), ${\mathcal M}\models ``p\models \underline{i}+\underline{k}=\underline{j}"$ which implies (by $A_4$ of ${\rm Q}$ - see Example \ref{exampleA}) that ${\mathcal M}\models ``p\models \underline{i}\leqslant \underline{j}"$.  By induction on $j$, very similarly to the proof of Lemma \ref{lemm1},  one can prove the statements (2) and (3),  noting that the evaluation $p$ must satisfy the following axioms of ${\rm Q}$:
\begin{eqnarray*}
A_5:&   \forall x (x+0=x);  \ \ \ \ \ \ \   & A_6:    \forall x\forall y (x+{\mathfrak s} y={\mathfrak s} (x+y)); \\
A_7:&  \forall x (x\cdot 0=0);   \ \ \ \ \ \ \ & A_8:    \forall x\forall y (x\cdot{\mathfrak s} y=x\cdot y + x).
\end{eqnarray*}\end{proof}

\begin{corollary}\label{cor1}
Suppose for an  ${\mathcal L}_A-$term $t(x_1,\ldots,x_m)$ and some elements
$i_1,\ldots,i_m,i\leqslant\omega_1(\alpha)$, we have ${\mathcal M}\models
t(i_1,\ldots,i_m)=i$. Then we must also have ${\textswab M}(\Lambda,p)\models
t(\underline{i_1}/p,\ldots,\underline{i_m}/p)=\underline{i}/p$.
\end{corollary}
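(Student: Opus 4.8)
The plan is to reduce the statement to a claim about the evaluation $p$ alone, and then prove that claim by an external induction on the build-up of the (standard) term $t$, feeding off Lemma \ref{lemm3}. For the reduction: by the definition of the ${\mathcal L}_A$-structure on ${\textswab M}(\Lambda,p)$, the value of an ${\mathcal L}_A$-term $t(x_1,\ldots,x_m)$ at the classes $\underline{i_1}/p,\ldots,\underline{i_m}/p$ is exactly the class $t(\underline{i_1},\ldots,\underline{i_m})/p$ of the term obtained by substituting the terms $\underline{i_1},\ldots,\underline{i_m}$ for the variables; this term lies in $\Lambda^{\langle\infty\rangle}$ because $\{\underline{0},\ldots,\underline{\omega_1(\alpha)}\}\subseteq\Lambda$ and $\Lambda^{\langle\infty\rangle}$ is closed under the function symbols of ${\mathcal L}_A$. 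Hence the corollary amounts to showing, inside ${\mathcal M}$, that $p\models t(\underline{i_1},\ldots,\underline{i_m})=\underline{i}$ whenever ${\mathcal M}\models t(i_1,\ldots,i_m)=i$ with $i_1,\ldots,i_m,i\leqslant\omega_1(\alpha)$.

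This I would prove by induction on $t$. The cases $t=0$ and $t=x_k$ follow at once from clause (i) of Definition \ref{defeval}. For $t={\mathfrak s}u$, put $a=u(i_1,\ldots,i_m)$; then $a=i-1\leqslant\omega_1(\alpha)$, the induction hypothesis gives $p\models u(\underline{i_1},\ldots,\underline{i_m})=\underline{a}$, and since $\backsim_p$ is a congruence and ${\mathfrak s}\underline{a}$ is literally the term $\underline{a+1}=\underline{i}$, we are done. For $t=u+v$, writing $a=u(i_1,\ldots,i_m)$ and $b=v(i_1,\ldots,i_m)$, we have $a,b\leqslant a+b=i\leqslant\omega_1(\alpha)$, so the induction hypothesis applies to $u$ and to $v$; congruence lets us pass to $\underline{a}+\underline{b}$, and Lemma \ref{lemm3}(2) identifies this with $\underline{a+b}=\underline{i}$. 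The product case $t=u\cdot v$ goes through in the same fashion via Lemma \ref{lemm3}(3) as long as both factor-values $a=u(i_1,\ldots,i_m)$ and $b=v(i_1,\ldots,i_m)$ are $\geqslant 1$, for then $a,b\leqslant i\leqslant\omega_1(\alpha)$ and Lemma \ref{lemm3}(3) applies.

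The one genuinely delicate subcase is $t=u\cdot v$ with $a=0$ or $b=0$ (so $i=0$): here the other factor-value may overshoot $\omega_1(\alpha)$, so Lemma \ref{lemm3} is not available for it, and one must not try to invoke the induction hypothesis on that factor. Instead I would apply the induction hypothesis only to the factor whose value is $0$, use the congruence property of $\backsim_p$ to reach $0\cdot v(\underline{i_1},\ldots,\underline{i_m})$ (resp. $u(\underline{i_1},\ldots,\underline{i_m})\cdot 0$), and then invoke that $p$, being an ${\rm I\Delta_0}$-evaluation on $\Lambda^{\langle\infty\rangle}$, satisfies the available Skolem instances of the ${\rm Q}$/${\rm I\Delta_0}$-provable absorption laws $x\cdot 0=0$ and $0\cdot x=0$ (by $A_7$ of Example \ref{exampleA} together with Remark \ref{remark1}), noting that the relevant product term belongs to $\Lambda^{\langle\infty\rangle}$. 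I expect this overshooting-subterm phenomenon to be the only real obstacle; everything else is the routine propagation of equalities through the congruence $\backsim_p$ combined with Lemma \ref{lemm3}.
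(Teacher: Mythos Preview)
Your proposal is correct and follows the same route as the paper, which simply writes ``By induction on $t$ using Lemma~\ref{lemm3}.'' You have in fact been more careful than the paper: the overshooting subcase $t=u\cdot v$ with one factor evaluating to $0$ is a genuine edge case (since then the other factor need not be $\leqslant\omega_1(\alpha)$ and the induction hypothesis fails for it), and your fix via $A_7$ and Remark~\ref{remark1} is exactly right, whereas the paper's one-line proof glosses over this point entirely.
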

\begin{proof} By induction on $t$ using Lemma \ref{lemm3}.
\end{proof}

\begin{lemma}\label{lem4}
Suppose   $t(x_1,\ldots,x_m),t'(x_1,\ldots,x_m)$ are two
${\mathcal L}_A-$terms and $i_1,\ldots,i_m\leqslant \alpha^k$ are  elements of
${\mathcal M}$ for some standard number  $k\!\in\!\mathbb{N}$.
Then, if  ${\mathcal M}\models t(i_1,\ldots,i_m) = t'(i_1,\ldots,i_m)$
holds,
${\textswab M}(\Lambda,p)\models
t(\underline{{i_1}}/p,\ldots,\underline{{i_m}}/p) =
t'(\underline{{i_1}}/p,\ldots,\underline{{i_m}}/p)$ must hold too.
\end{lemma}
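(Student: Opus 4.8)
The plan is to reduce Lemma~\ref{lem4} to Corollary~\ref{cor1}; the only genuine work is a size estimate, and it is precisely here that one uses the hypothesis $i_1,\ldots,i_m\leqslant\alpha^k$ with \emph{standard} $k$ (the weaker hypothesis $i_j\leqslant\omega_1(\alpha)$ would not do, since then the value of $t$ at these arguments could overshoot $\omega_1(\alpha)$ and its numeral would fall outside $\Lambda$).

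First I would observe that $t$ and $t'$ are \emph{fixed} ${\mathcal L}_A$-terms, hence polynomials of some standard degree with standard coefficients; consequently the bit-length of the value of such a term grows at most linearly in the bit-lengths of its arguments, with a standard implied constant. So there is a standard $k'$, depending only on $t$, $t'$, $k$, such that the common value $i$ given by ${\mathcal M}\models t(i_1,\ldots,i_m)=i=t'(i_1,\ldots,i_m)$ (they agree by hypothesis) satisfies ${\mathcal M}\models i\leqslant\alpha^{k'}$, using $i_1,\ldots,i_m\leqslant\alpha^{k}$. Since $k'$ is standard and $\alpha$, hence $\log\alpha$, is non-standard, we get $\alpha^{k'}\leqslant 2^{(\log\alpha)^2}=\omega_1(\alpha)$; note that $\omega_1(\alpha)$ exists in ${\mathcal M}$ because $\underline{\omega_1(\alpha)}\!\in\!\Lambda$ by the standing hypothesis (and ${\mathcal M}\models\alpha\!\in\!\ell\!{\it og}^2$). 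Therefore $i,i_1,\ldots,i_m\leqslant\omega_1(\alpha)$, so all of $\underline{i},\underline{i_1},\ldots,\underline{i_m}$ lie in $\Lambda$, and the $p$-classes $\underline{i}/p,\underline{i_1}/p,\ldots,\underline{i_m}/p$ are legitimate elements of ${\textswab M}(\Lambda,p)$.

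Now Corollary~\ref{cor1} applies to the term $t$ with the elements $i_1,\ldots,i_m,i\leqslant\omega_1(\alpha)$, yielding ${\textswab M}(\Lambda,p)\models t(\underline{i_1}/p,\ldots,\underline{i_m}/p)=\underline{i}/p$, and likewise to $t'$, yielding ${\textswab M}(\Lambda,p)\models t'(\underline{i_1}/p,\ldots,\underline{i_m}/p)=\underline{i}/p$. Since $\backsim_p$ is an equivalence relation, transitivity in ${\textswab M}(\Lambda,p)$ gives ${\textswab M}(\Lambda,p)\models t(\underline{i_1}/p,\ldots,\underline{i_m}/p)=t'(\underline{i_1}/p,\ldots,\underline{i_m}/p)$, which is the assertion. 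The one delicate point — and the place where the argument has to say something — is the estimate $i=t(i_1,\ldots,i_m)\leqslant\omega_1(\alpha)$; once that is secured, the conclusion is a purely mechanical double invocation of Corollary~\ref{cor1} together with transitivity of $\backsim_p$.
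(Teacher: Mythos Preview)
Your proof is correct and follows essentially the same route as the paper: bound the common value $i=t(i_1,\ldots,i_m)$ by $\omega_1(\alpha)$ using the polynomial nature of ${\mathcal L}_A$-terms and the standardness of $k$, then invoke Corollary~\ref{cor1} twice and conclude by transitivity. The paper's proof is simply terser, asserting $t(i_1,\ldots,i_m)\leqslant\omega_1(\alpha)$ in one line where you (rightly) spell out the size estimate via a standard $k'$.
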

\begin{proof} By $i_1,\ldots,i_m\leqslant \alpha^k$ we have $t(i_1,\ldots,i_m)\leqslant\omega_1(\alpha)$. Put  $i$ be the common value {$i=t(i_1,\ldots,i_m) = t'(i_1,\ldots,i_m)$.}
Then By Corollary \ref{cor1} we have
{${\textswab M}(\Lambda,p)\models
t(\underline{{i_1}}/p,\ldots,\underline{{i_m}}/p) =\underline{i}/p=
t'(\underline{{i_1}}/p,\ldots,\underline{{i_m}}/p).$}
\end{proof}

\begin{lemma}\label{lem5}
Suppose   $t(x_1,\ldots,x_m),t'(x_1,\ldots,x_m)$ are two
${\mathcal L}_A-$terms and $i_1,\ldots,i_m\leqslant \alpha^k$ are  elements of
${\mathcal M}$ for some standard number  $k\!\in\!\mathbb{N}$.
If we have ${\mathcal M}\models t(i_1,\ldots,i_m) \leqslant t'(i_1,\ldots,i_m)$
then we must also have  the satisfaction
${\textswab M}(\Lambda,p)\models
t(\underline{{i_1}}/p,\ldots,\underline{{i_m}}/p) \leqslant
t'(\underline{{i_1}}/p,\ldots,\underline{{i_m}}/p).$
\end{lemma}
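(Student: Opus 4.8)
The plan is to reduce the inequality statement to the equality statement of Lemma~\ref{lem4}, using the defining property of $\leqslant$ in ${\rm Q}$ (axiom $A_4$). First I would observe that in ${\mathcal M}$, the hypothesis ${\mathcal M}\models t(i_1,\ldots,i_m)\leqslant t'(i_1,\ldots,i_m)$ gives, by $A_4$, an element $d\in{\mathcal M}$ with ${\mathcal M}\models t(i_1,\ldots,i_m)+d=t'(i_1,\ldots,i_m)$. Since $i_1,\ldots,i_m\leqslant\alpha^k$, both $t(i_1,\ldots,i_m)$ and $t'(i_1,\ldots,i_m)$ are bounded by $\omega_1(\alpha)$, hence so is $d$; so there is a standard $k'$ with $d\leqslant\alpha^{k'}$, and in particular $d=\underline{d}$'s value lies in the range covered by Lemma~\ref{lemm3}.

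**Key steps.** Set $i=t(i_1,\ldots,i_m)$ and $i'=t'(i_1,\ldots,i_m)$, so ${\mathcal M}\models i+d=i'$ with $i,i',d\leqslant\omega_1(\alpha)$. By Corollary~\ref{cor1} applied to the term $t$ (resp. $t'$) we get ${\textswab M}(\Lambda,p)\models t(\underline{i_1}/p,\ldots,\underline{i_m}/p)=\underline{i}/p$ and ${\textswab M}(\Lambda,p)\models t'(\underline{i_1}/p,\ldots,\underline{i_m}/p)=\underline{i'}/p$. By Lemma~\ref{lemm3}(2) (whose hypothesis $i+d\leqslant\omega_1(\alpha)$ holds since $i+d=i'\leqslant\omega_1(\alpha)$) we have ${\textswab M}(\Lambda,p)\models\underline{i}/p+\underline{d}/p=\underline{i'}/p$. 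Finally, since $p$ is an ${\rm I\Delta_0}$-evaluation (hence a ${\rm Q}$-evaluation) on $\Lambda^{\langle\infty\rangle}$, and all the relevant terms lie in $\Lambda^{\langle\infty\rangle}$, the instance of $A_4$ witnessed by $\underline{d}/p$ forces ${\textswab M}(\Lambda,p)\models\underline{i}/p\leqslant\underline{i'}/p$, which by the two equalities above is exactly ${\textswab M}(\Lambda,p)\models t(\underline{i_1}/p,\ldots,\underline{i_m}/p)\leqslant t'(\underline{i_1}/p,\ldots,\underline{i_m}/p)$. (Invoking Lemma~\ref{lm:mlambdap}, which tells us ${\textswab M}(\Lambda,p)\models{\rm I\Delta_0}$, is an alternative way to see that this last deduction is legitimate inside the structure.)

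**Main obstacle.** The only delicate point is bookkeeping on the bounds: one must check that the witness $d$ obtained from $A_4$ is itself $\leqslant\omega_1(\alpha)$ (so that Lemma~\ref{lemm3} and Corollary~\ref{cor1} apply to $\underline{d}$) and that the finitely many ground terms $\underline{i}$, $\underline{i'}$, $\underline{d}$, together with those occurring in the evaluation of $t,t'$ via Corollary~\ref{cor1}, all already belong to $\Lambda^{\langle\infty\rangle}$ — which they do because $\Lambda\supseteq\{\underline{0},\ldots,\underline{\omega_1(\alpha)}\}$ and $\Lambda^{\langle\infty\rangle}$ is closed under the function symbols of ${\mathcal L}_A$ and the needed Skolem functions. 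Once these inclusions are noted the argument is a routine chaining of the previously established lemmas, so I do not expect any real difficulty beyond this verification.
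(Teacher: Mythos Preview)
Your argument is correct and follows the same overall strategy as the paper: extract a witness for the inequality via axiom $A_4$, transfer the resulting additive equality to ${\textswab M}(\Lambda,p)$ using the earlier lemmas, and then read off the inequality in ${\textswab M}(\Lambda,p)$ again by $A_4$.

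There is one small but genuine difference worth noting. The paper, after obtaining the witness $\beta$ with ${\mathcal M}\models t(\bar i)+\beta=t'(\bar i)$, invokes Lemma~\ref{lemm2} to write $\beta$ as a value $u(j_1,\ldots,j_k)$ of an ${\mathcal L}_A$-term with small parameters, and then applies Lemma~\ref{lem4} to the single equality $t(\bar i)+u(\bar j)=t'(\bar i)$. You instead observe directly that $d=\beta\leqslant t'(\bar i)\leqslant\omega_1(\alpha)$, so the numeral $\underline{d}$ is already a term in $\Lambda$; then Corollary~\ref{cor1} handles $t,t'$ and Lemma~\ref{lemm3}(2) handles the sum $\underline{i}+\underline{d}=\underline{i'}$. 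This is a cleaner route that bypasses Lemma~\ref{lemm2} entirely. (Your remark that $d\leqslant\alpha^{k'}$ for some standard $k'$ is true but unnecessary; all you need is $d\leqslant\omega_1(\alpha)$, which is what Lemma~\ref{lemm3} and Corollary~\ref{cor1} actually require.)
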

\begin{proof} Noting that
${\rm Q}\vdash\forall x,y\big(x\leqslant y\leftrightarrow \exists z
(x+z=y)\big)$ by the assumption there exists an $\beta\!\!\in\!\!{\mathcal M}$
such that ${\mathcal M}\models t(i_1,\ldots,i_m)+\beta=t'(i_1,\ldots,i_m)$.
On the other hand ${\mathcal M}\models\beta\leqslant
t'(i_1,\ldots,i_m)$, so by Lemma~\ref{lemm2} there exist a term $u$
and some $j_1,\ldots,j_k\leqslant\omega_1(\alpha)$ such that ${\mathcal M}\models
\beta=u(j_1,\ldots,j_k)$. Thus the equality
$t(i_1,\ldots,i_m)+u(j_1,\ldots,j_k)=s(i_1,\ldots,i_m)$ holds in
${\mathcal M}$. Now, by Lemma \ref{lem4}, \newline\centerline{${\textswab M}(\Lambda,p)\models
t(\underline{{i_1}}/p,\ldots,\underline{{i_m}}/p)+u(\underline{{j_1}}/p,\ldots,\underline{{j_k}}/p)=
t'(\underline{{i_1}}/p,\ldots,\underline{{i_m}}/p)$,} whence
$t(\underline{{i_1}}/p,\ldots,\underline{{i_m}}/p)\leqslant
t'(\underline{{i_1}}/p,\ldots,\underline{{i_m}}/p)$ is satisfied  in ${\textswab M}(\Lambda,p)$.
\end{proof}

\begin{lemma}\label{lem6}
Suppose   $t(x_1,\ldots,x_m),t'(x_1,\ldots,x_m)$ are two
${\mathcal L}_A-$terms and $i_1,\ldots,i_m\leqslant \alpha^k$ are  elements of
${\mathcal M}$ for some standard number  $k\!\in\!\mathbb{N}$.
If it is true that ${\mathcal M}\models t(i_1,\ldots,i_m) \not= t'(i_1,\ldots,i_m)$
then
${\textswab M}(\Lambda,p)\models
t(\underline{{i_1}}/p,\ldots,\underline{{i_m}}/p) \not=
t'(\underline{{i_1}}/p,\ldots,\underline{{i_m}}/p)$ must be true too.
And if ${\mathcal M}\models t(i_1,\ldots,i_m) \not\leqslant t'(i_1,\ldots,i_m)$
then
${\textswab M}(\Lambda,p)\models
t(\underline{{i_1}}/p,\ldots,\underline{{i_m}}/p) \not\leqslant
t'(\underline{{i_1}}/p,\ldots,\underline{{i_m}}/p).$
\end{lemma}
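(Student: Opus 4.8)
The plan is to deduce Lemma~\ref{lem6} from the positive transfer already established in Lemma~\ref{lem5}, using that the order of ${\mathcal M}$ is linear and discrete and that ${\textswab M}(\Lambda,p)\models{\rm I\Delta_0}$ (the latter by Lemma~\ref{lm:mlambdap}, since by the standing hypothesis $p$ is an ${\rm I\Delta_0}$-evaluation on all of $\Lambda^{\langle\infty\rangle}$). The point is that in a discretely ordered model the failure of one term being $\leqslant$ another, and the failure of two terms being equal, are each equivalent to a strict inequality, and a strict inequality is an assertion of the form ${\mathfrak s}(\cdot)\leqslant(\cdot)$ --- precisely the shape to which Lemma~\ref{lem5} applies.

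First I would treat the $\not\leqslant$ clause. Assume ${\mathcal M}\models t(i_1,\ldots,i_m)\not\leqslant t'(i_1,\ldots,i_m)$. Since ${\mathcal M}\models{\rm I\Delta_0}$ has a linear discrete order, this gives ${\mathcal M}\models t'(i_1,\ldots,i_m)+1\leqslant t(i_1,\ldots,i_m)$; writing $u(x_1,\ldots,x_m)$ for the ${\mathcal L}_A$-term ${\mathfrak s}\big(t'(x_1,\ldots,x_m)\big)$ (or $t'(x_1,\ldots,x_m)+1$ if ${\mathcal L}_A$ contains $1$ but not ${\mathfrak s}$), this reads ${\mathcal M}\models u(i_1,\ldots,i_m)\leqslant t(i_1,\ldots,i_m)$. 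Applying Lemma~\ref{lem5} to the terms $u,t$ at the same points $i_1,\ldots,i_m\leqslant\alpha^k$ gives ${\textswab M}(\Lambda,p)\models{\mathfrak s}\big(t'(\underline{{i_1}}/p,\ldots,\underline{{i_m}}/p)\big)\leqslant t(\underline{{i_1}}/p,\ldots,\underline{{i_m}}/p)$. As ${\textswab M}(\Lambda,p)\models{\rm I\Delta_0}$, it satisfies $\forall z\,(z<{\mathfrak s} z)$ and the antisymmetry of $\leqslant$, so from $t'(\underline{{i_1}}/p,\ldots,\underline{{i_m}}/p)<{\mathfrak s}\big(t'(\underline{{i_1}}/p,\ldots,\underline{{i_m}}/p)\big)\leqslant t(\underline{{i_1}}/p,\ldots,\underline{{i_m}}/p)$ one reads off ${\textswab M}(\Lambda,p)\models t(\underline{{i_1}}/p,\ldots,\underline{{i_m}}/p)\not\leqslant t'(\underline{{i_1}}/p,\ldots,\underline{{i_m}}/p)$.

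For the $\neq$ clause, if ${\mathcal M}\models t(i_1,\ldots,i_m)\neq t'(i_1,\ldots,i_m)$ then by linearity of the order of ${\mathcal M}$ one of ${\mathcal M}\models t(i_1,\ldots,i_m)\not\leqslant t'(i_1,\ldots,i_m)$ and ${\mathcal M}\models t'(i_1,\ldots,i_m)\not\leqslant t(i_1,\ldots,i_m)$ holds; feeding the appropriate one to the clause just proved yields a strict inequality in ${\textswab M}(\Lambda,p)$ between the two terms evaluated at $\underline{{i_1}}/p,\ldots,\underline{{i_m}}/p$, and either direction gives ${\textswab M}(\Lambda,p)\models t(\underline{{i_1}}/p,\ldots,\underline{{i_m}}/p)\neq t'(\underline{{i_1}}/p,\ldots,\underline{{i_m}}/p)$. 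Thus Lemma~\ref{lem6} is essentially a corollary of Lemma~\ref{lem5}; the one point that must be handled with care --- and where a careless argument would break down --- is the repeated appeal to ${\textswab M}(\Lambda,p)\models{\rm I\Delta_0}$, which requires that $p$ be an ${\rm I\Delta_0}$-evaluation on the whole of $\Lambda^{\langle\infty\rangle}$ (so that Lemma~\ref{lm:mlambdap} applies and the facts $z<{\mathfrak s} z$, antisymmetry and linearity of $\leqslant$ are available in ${\textswab M}(\Lambda,p)$). This is exactly what the hypotheses standing since just before Lemma~\ref{lemm1} guarantee, and no new coding estimate is needed.
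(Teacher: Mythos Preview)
Your argument is correct and is essentially the paper's proof spelled out: the paper simply cites Lemma~\ref{lem5} together with Remark~\ref{remark1} and the two ${\rm I\Delta_0}$-provable equivalences $x\neq y\leftrightarrow{\mathfrak s}y\leqslant x\vee{\mathfrak s}x\leqslant y$ and $x\nleq y\leftrightarrow{\mathfrak s}y\leqslant x$, which is exactly the reduction you carry out by hand. The only cosmetic difference is that the paper appeals to $p$ being an ${\rm I\Delta_0}$-evaluation via Remark~\ref{remark1}, whereas you pass through ${\textswab M}(\Lambda,p)\models{\rm I\Delta_0}$ via Lemma~\ref{lm:mlambdap}; the content is the same.
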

\begin{proof}
It follows from Lemma \ref{lem5} (and Remark \ref{remark1}) noting that $p$ is an ${\rm I\Delta_0}-$evaluation on $\Lambda^{\langle\infty\rangle}$ and

\noindent ${\rm I\Delta_0}\vdash\forall x,y\big(x\neq y\longleftrightarrow{\mathfrak s} y\leqslant
x \, \vee{\mathfrak s} x\leqslant y\big)$, and

\noindent ${\rm I\Delta_0}\vdash \forall x,y\big(x\nleq y\longleftrightarrow{\mathfrak s} y\leqslant x\big)$.
\end{proof}

\begin{theorem}\label{open}
Suppose   $\psi(x_1,\ldots,x_m)$ is an open RNNF
${\mathcal L}_A-$formula and $i_1,\ldots,i_m\leqslant \alpha^k$ are  elements of
${\mathcal M}$ for some standard number  $k\!\in\!\mathbb{N}$.
    If  we have ${\mathcal M}\models \psi(i_1,\ldots,i_m) $
then we also have
${\textswab M}(\Lambda,p)\models
\psi(\underline{{i_1}}/p,\ldots,\underline{{i_m}}/p).$
\end{theorem}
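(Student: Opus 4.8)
The plan is to prove Theorem~\ref{open} by a routine induction on the structure of the open RNNF formula $\psi$, reducing everything to the term-level lemmas already established. The key point is that an RNNF formula is built from atomic and negated-atomic formulas using only $\wedge$ and $\vee$, so the induction has a particularly clean shape: the base cases are exactly the atomic and negated-atomic formulas, and the inductive step splits into two entirely symmetric and trivial cases for the connectives.

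For the base cases, first I would observe that every atomic ${\mathcal L}_A$-formula is of the form $t(\overline{x})=t'(\overline{x})$ or $t(\overline{x})\leqslant t'(\overline{x})$ for ${\mathcal L}_A$-terms $t,t'$, and every negated-atomic formula is of the form $t(\overline{x})\neq t'(\overline{x})$ or $t(\overline{x})\not\leqslant t'(\overline{x})$. So if ${\mathcal M}\models\psi(i_1,\ldots,i_m)$ for a base-case $\psi$, then one of the four hypotheses of Lemmas~\ref{lem4}, \ref{lem5}, \ref{lem6} holds with the given $\overline{x}:=\overline{\imath}$, and the corresponding conclusion of that lemma gives precisely ${\textswab M}(\Lambda,p)\models\psi(\underline{i_1}/p,\ldots,\underline{i_m}/p)$. (Here one uses that $i_1,\ldots,i_m\leqslant\alpha^k$ for a standard $k$, which is exactly the hypothesis those lemmas require, and also that $p$ is an ${\rm I\Delta_0}$-evaluation on $\Lambda^{\langle\infty\rangle}$, as assumed throughout this block of lemmas.)

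For the inductive step, suppose the claim holds for RNNF formulas $\psi_1,\psi_2$ with the same free variables $\overline{x}$ (after rectification one may assume a common variable list, padding with dummy variables if necessary). If $\psi=\psi_1\wedge\psi_2$ and ${\mathcal M}\models(\psi_1\wedge\psi_2)(\overline{\imath})$, then ${\mathcal M}\models\psi_1(\overline{\imath})$ and ${\mathcal M}\models\psi_2(\overline{\imath})$, so by the induction hypothesis ${\textswab M}(\Lambda,p)\models\psi_1(\underline{\overline{\imath}}/p)$ and ${\textswab M}(\Lambda,p)\models\psi_2(\underline{\overline{\imath}}/p)$, whence ${\textswab M}(\Lambda,p)\models(\psi_1\wedge\psi_2)(\underline{\overline{\imath}}/p)$ by the definition of satisfaction for $\wedge$. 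The case $\psi=\psi_1\vee\psi_2$ is identical with "and" replaced by "or" throughout: from ${\mathcal M}\models\psi_1(\overline{\imath})$ or ${\mathcal M}\models\psi_2(\overline{\imath})$ we get, by the induction hypothesis applied to whichever disjunct holds, ${\textswab M}(\Lambda,p)\models\psi_1(\underline{\overline{\imath}}/p)$ or ${\textswab M}(\Lambda,p)\models\psi_2(\underline{\overline{\imath}}/p)$, hence ${\textswab M}(\Lambda,p)\models(\psi_1\vee\psi_2)(\underline{\overline{\imath}}/p)$. Since an RNNF formula has no other constructors (implication is eliminated and negation is pushed to the atoms), the induction is complete.

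There is essentially no hard part here; the theorem is a clean packaging of Lemmas~\ref{lem4}--\ref{lem6} and is stated mainly so it can be invoked later to transfer the truth of the bounded formula $\theta$ from ${\mathcal M}$ to ${\mathcal N}={\textswab M}(\Lambda,p)$. The only point requiring a little care is the bookkeeping about free variables in the inductive step — making sure all subformulas are treated as formulas in the same tuple $\overline{x}$ — but since rectification already guarantees a well-behaved variable structure, this is purely cosmetic. If I had to name an obstacle, it is only the implicit check that the three term lemmas collectively cover every shape of atomic and negated-atomic ${\mathcal L}_A$-formula, which they do precisely because ${\mathcal L}_A$ has exactly the two relation symbols $=$ and $\leqslant$.
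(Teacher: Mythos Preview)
Your proposal is correct and follows essentially the same approach as the paper: Lemmas~\ref{lem4} and~\ref{lem5} handle the atomic base cases, Lemma~\ref{lem6} handles the negated-atomic base cases, and a straightforward induction on $\wedge$ and $\vee$ completes the argument. The paper's own proof is a two-sentence sketch of exactly this, so your version simply fills in the routine details.
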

\begin{proof}
Lemmas \ref{lem4}  and \ref{lem5} prove the theorem for atomic formulas,
and Lemma \ref{lem6} proves it for negated atomic formulas. For the
disjunctive and conjunctive compositions of those formulas one can prove the theorem by a simple
induction.
\end{proof}

\begin{theorem}\label{bounded}
Suppose   that $\varphi(x_1,\ldots,x_m)$ is a bounded
${\mathcal L}_A-$formula and that $i_1,\ldots,i_m\leqslant \alpha^k$ are  elements of
${\mathcal M}$ for some standard number  $k\!\in\!\mathbb{N}$.
If ${\mathcal M}\models \varphi(i_1,\ldots,i_m) $
then
${\textswab M}(\Lambda,p)\models
\varphi(\underline{{i_1}}/p,\ldots,\underline{{i_m}}/p).$
\end{theorem}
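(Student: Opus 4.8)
The plan is to prove Theorem \ref{bounded} by induction on the logical complexity of the bounded formula $\varphi$, using Theorem \ref{open} as the base case. The subtle point is that unlike the case of open formulas, a bounded quantifier $\exists y\!\leqslant\! s(\overline{x})\,\chi$ or $\forall y\!\leqslant\! s(\overline{x})\,\chi$ ranges over a set of witnesses in ${\mathcal M}$, and I need those witnesses to be of the form $\underline{j}/p$ for $j$ in a controlled range so that the induction hypothesis applies. This is exactly where the hypothesis $i_1,\ldots,i_m\leqslant\alpha^k$ and the fact that $\Lambda\supseteq\{\underline{0},\ldots,\underline{\omega_1(\alpha)}\}$ do the work.

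\medskip

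\noindent\textbf{Proof plan.} Write $\varphi$ in RNNF. The base case (atomic and negated-atomic subformulas, closed under $\wedge,\vee$) is precisely Theorem \ref{open}, applied to any tuple of parameters bounded by a fixed standard power of $\alpha$. For the inductive step there are two cases.

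First, suppose $\varphi(\overline{x})=\exists y\!\leqslant\! s(\overline{x})\,\chi(\overline{x},y)$, i.e.\ $\exists y\,(y\leqslant s(\overline{x})\wedge\chi(\overline{x},y))$, and ${\mathcal M}\models\varphi(\overline{i})$ with $\overline{i}\leqslant\alpha^k$. Pick a witness $b\!\in\!{\mathcal M}$ with ${\mathcal M}\models b\leqslant s(\overline{i})$. Since $s$ is an ${\mathcal L}_A$-term and each $i_\ell\leqslant\alpha^k$, we have $s(\overline{i})\leqslant\omega_1(\alpha)$ (for a suitable standard exponent absorbed into $k$), so $b\leqslant\omega_1(\alpha)$; hence $\underline{b}\!\in\!\Lambda$, and $b$ is a legitimate parameter for the induction hypothesis (it is $\leqslant\omega_1(\alpha)=\alpha^{k'}$ up to $\Omega_1$, which is itself a standard-power bound in the relevant sense, or more carefully $b\leqslant s(\overline i)$ is a term-value and Lemma \ref{lemm2} lets us treat $\underline b/p$ as a term value). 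Applying the induction hypothesis to the lower-complexity formula $\chi(\overline{x},y)$ and to the tuple $(\overline{i},b)$, we get ${\textswab M}(\Lambda,p)\models\chi(\underline{i_1}/p,\ldots,\underline{i_m}/p,\underline{b}/p)$. Moreover by Lemma \ref{lem5} (the inequality transfer) ${\textswab M}(\Lambda,p)\models\underline{b}/p\leqslant s(\underline{i_1}/p,\ldots,\underline{i_m}/p)$. Combining, ${\textswab M}(\Lambda,p)\models\exists y\!\leqslant\! s(\underline{\overline{i}}/p)\,\chi(\underline{\overline{i}}/p,y)$, i.e.\ $\varphi$ holds at the images, as required.

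Second, suppose $\varphi(\overline{x})=\forall y\!\leqslant\! s(\overline{x})\,\chi(\overline{x},y)$ and ${\mathcal M}\models\varphi(\overline{i})$. To show ${\textswab M}(\Lambda,p)\models\varphi(\underline{\overline{i}}/p)$, take any element of ${\textswab M}(\Lambda,p)$, which is some $t/p$ for $t\!\in\!\Lambda^{\langle\infty\rangle}$, and assume ${\textswab M}(\Lambda,p)\models t/p\leqslant s(\underline{\overline{i}}/p)$. Since $s(\overline{i})\leqslant\omega_1(\alpha)$ in ${\mathcal M}$, Corollary \ref{cor1} gives ${\textswab M}(\Lambda,p)\models s(\underline{\overline{i}}/p)=\underline{s(\overline i)}/p$, so ${\textswab M}(\Lambda,p)\models t/p\leqslant\underline{s(\overline i)}/p$; then Lemma \ref{lemm1} yields $t/p=\underline{j}/p$ for some $j\leqslant s(\overline{i})\leqslant\omega_1(\alpha)$ in ${\mathcal M}$. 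Now ${\mathcal M}\models j\leqslant s(\overline{i})$, hence ${\mathcal M}\models\chi(\overline{i},j)$; since $j\leqslant\omega_1(\alpha)$ is again an admissible parameter, the induction hypothesis gives ${\textswab M}(\Lambda,p)\models\chi(\underline{\overline{i}}/p,\underline{j}/p)$, that is ${\textswab M}(\Lambda,p)\models\chi(\underline{\overline{i}}/p,t/p)$. As $t/p$ was an arbitrary element $\leqslant s(\underline{\overline{i}}/p)$, we conclude ${\textswab M}(\Lambda,p)\models\forall y\!\leqslant\! s(\underline{\overline{i}}/p)\,\chi(\underline{\overline{i}}/p,y)$.

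\medskip

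\noindent\textbf{Main obstacle.} The crux is the universal-quantifier step: one must know that \emph{every} element of ${\textswab M}(\Lambda,p)$ below the image of the bound $s(\overline{i})$ is in fact the image $\underline{j}/p$ of some genuine number $j\leqslant s(\overline{i})\leqslant\omega_1(\alpha)$ of ${\mathcal M}$. This is exactly Lemma \ref{lemm1} (the "interval is standard" lemma), combined with Corollary \ref{cor1} to identify the image of the term-bound with $\underline{s(\overline i)}/p$, and it is the reason the range $\{\underline{0},\ldots,\underline{\omega_1(\alpha)}\}$ was put into $\Lambda$ in the first place. Once that identification is in hand, the remaining bookkeeping — keeping all parameters $\leqslant\omega_1(\alpha)$ so that the induction hypothesis stays applicable, which works since $s(\overline i)\leqslant\omega_1(\alpha)$ for a standard exponent and $\Omega_1$ is available in ${\mathcal M}$ — is routine.
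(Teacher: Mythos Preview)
Your proof is correct and follows essentially the same route as the paper: reduce to Theorem~\ref{open} by induction on the bounded RNNF formula, using Lemma~\ref{lemm2}/Corollary~\ref{cor1} for the existential step and Lemma~\ref{lemm1} for the universal step (exactly your ``main obstacle''). The paper additionally repackages the same ingredients into a short model-theoretic alternative, observing that $t(\overline{i})\mapsto t(\underline{\overline{i}}/p)$ is an isomorphism of initial segments $\langle[0,\alpha]\rangle_{\mathcal M}\cong\langle[\underline{0}/p,\underline{\alpha}/p]\rangle_{{\textswab M}(\Lambda,p)}$, from which the transfer of bounded formulas follows at once.
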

\begin{proof}
Every bounded formula can be written as an (equivalent) RNNF
formula. By Lemma~\ref{lemm2} the range of bounded quantifiers of a
formula whose all  parameters belong to the set
\newline\centerline{$\{t(i_1,\ldots,i_m)\mid i_1,\ldots,i_m\leqslant \alpha \ \& \  t \ {\rm is \
an} \ {\mathcal L}_A-{\rm term}\}$} is indeed that set again. Now the
conclusion follows from Theorem \ref{open}.

\noindent $\blacktriangleright$ {\em An alternative proof:} To make
this important theorem more clear, we sketch another proof, which is not really too different but has more
model-theoretic flavor. Consider the above set again
$$\langle[0,\alpha]\rangle_{\mathcal M}=\{t(i_1,\ldots,i_m)\mid i_1,\ldots,i_m\leqslant \alpha \
\& \  t \ {\rm is \ an} \ {\mathcal L}_A-{\rm term}\}$$ which is a subset
of ${\mathcal M}$ closed under the successor, addition, and multiplication,
and thus forms a submodel of ${\mathcal M}$ (generated by
$[0,\alpha]=\{x\!\in\!{\mathcal M}\mid x\leqslant \alpha\}$). This submodel is an initial
segment of ${\mathcal M}$ by Lemma~\ref{lemm2}. Hence, whenever
${\mathcal M}\models\varphi$, for a bounded formula $\varphi$ with parameters in
$[0,\alpha]$, then $\langle[0,\alpha]\rangle_{\mathcal M}\models\varphi$.

Now, similarly, the
set
$$\langle[\underline{{0}}/p,\underline{{\alpha}}/p]\rangle_{\mathcal N}=
\{t(\underline{{i_1}}/p,\ldots,\underline{{i_m}}/p)\mid i_1,\ldots,i_m\leqslant \alpha
\ \& \  t \ {\rm is \ an} \ {\mathcal L}_A-{\rm term}\}$$ is an initial
segment and a submodel of ${\mathcal N}={\textswab M}(\Lambda,p)$. Thus if
$\langle[\underline{{0}}/p,\underline{{\alpha}}/p]\rangle_{\mathcal N}\models\varphi$, where
$\varphi$ is a bounded formula with parameters in
$[\underline{{0}}/p,\underline{{\alpha}}/p]$, then ${\textswab M}(\Lambda,p)\models\varphi$. Finally, we note that the mapping
$t(i_1,\ldots,i_m)\mapsto t(\underline{{i_1}}/p,\ldots,\underline{{i_m}}/p)$
defines a bijection between $\langle[0,\alpha]\rangle_{\mathcal M}$ and
$\langle[\underline{{0}}/p,\underline{{\alpha}}/p]\rangle_{\mathcal N}$ which is also an
isomorphism by Lemmas
 \ref{lem4}, \ref{lem5} and \ref{lem6}. So the proof of the theorem goes as
follows:

If ${\mathcal M}\models\varphi(i_1,\ldots,i_m)$  then
$\langle[0,\alpha]\rangle_{\mathcal M}\models\varphi(i_1,\ldots,i_m)$,  so
$\langle[\underline{{0}}/p,\underline{{\alpha}}/p]\rangle_{\mathcal N}\models\varphi(\underline{{i_1}}/p,\ldots,\underline{{i_m}}/p)$
hence ${\textswab M}(\Lambda,p)\models\varphi(\underline{{i_1}}/p,\ldots,\underline{{i_m}}/p)$.
\end{proof}

\begin{corollary}\label{last}
By the above assumptions, ${\textswab M}(\Lambda,p)\models\theta(\underline{\alpha}/p)$.\hfill
$\subset\!\!\!\!\supset$
\end{corollary}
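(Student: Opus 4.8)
\noindent The plan is to read off Corollary~\ref{last} as the specialization of Theorem~\ref{bounded} to the single bounded formula $\theta$ that we care about. Before applying that theorem I would recall that its proof, together with the chain of Lemmas~\ref{lemm1}--\ref{lem6} and Theorem~\ref{open} leading to it, was carried out under the standing hypotheses that $\Lambda\supseteq\{\underline{0},\ldots,\underline{\omega_1(\alpha)}\}$ and that $p$ is an ${\rm I\Delta_0}$-evaluation on $\Lambda^{\langle\infty\rangle}$. Both are in force in the situation at hand: the first holds by the very definition of $\Lambda$, and the second holds because the evaluation $p$ extracted from ${\mathcal M}\models{\rm HCon}({\rm I\Delta_0+\Omega_1})$ lives on $\Lambda^{\langle j\rangle}$ for the non-standard $j$ furnished by Theorem~\ref{log4} --- a set which contains $\Lambda^{\langle\infty\rangle}$ and is the one used to build ${\textswab M}(\Lambda,p)$ --- while every $({\rm I\Delta_0+\Omega_1})$-evaluation is in particular an ${\rm I\Delta_0}$-evaluation.

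\noindent With this in place I would invoke Theorem~\ref{bounded} with $m=1$, $\varphi(x_1):=\theta(x_1)$, parameter $i_1:=\alpha$, and standard exponent $k:=1$, so that the side condition $i_1\leqslant\alpha^{k}$ becomes the tautology $\alpha\leqslant\alpha$. Since ${\mathcal M}\models\theta(\alpha)$ by the choice of ${\mathcal M}$, Theorem~\ref{bounded} immediately delivers ${\textswab M}(\Lambda,p)\models\theta(\underline{\alpha}/p)$, which is precisely the statement of the corollary.

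\noindent There is no real obstacle here: all the substance sits in Theorem~\ref{bounded} (ultimately in Lemmas~\ref{lemm1} and~\ref{lemm2}, which control how bounded quantifiers and terms evaluated at small parameters transfer from ${\mathcal M}$ to ${\textswab M}(\Lambda,p)$). The only points needing a moment's care are that $\theta$ is genuinely a bounded formula, so that the bounded transfer of Theorem~\ref{bounded} applies rather than merely the open transfer of Theorem~\ref{open}, and that $\alpha$ falls within the permitted range $\alpha^{k}$ for a standard $k$, which it does with $k=1$. Once Corollary~\ref{last} is established, it combines with the already observed facts ${\mathcal N}\models({\rm I\Delta_0+\Omega_1})$ and ${\mathcal N}\models\underline{\alpha}/p\in\ell\!{\it og}^3$ to give ${\mathcal N}\models({\rm I\Delta_0+\Omega_1})+\exists x\in\ell\!{\it og}^3\,\theta(x)$, which completes the proof of Theorem~\ref{O1H}.
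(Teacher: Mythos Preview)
Your proposal is correct and is exactly the approach the paper intends: the corollary is stated without proof precisely because it is the specialization of Theorem~\ref{bounded} to $\varphi=\theta$, $m=1$, $i_1=\alpha$, $k=1$, using ${\mathcal M}\models\theta(\alpha)$. Your additional remarks verifying the standing hypotheses are accurate and make the dependence explicit, but add nothing beyond what the paper already assumes in the surrounding context.
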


Let us summarize what was argued in the last few pages.

\medskip

\begin{proof} {\bf (Of Theorem \ref{O1H}.)}
By the assumption of the theorem,  the theory $({\rm I\Delta_0+\Omega_1})+\exists x\!\in\!\ell\!{\it og}^2\theta(x)+{\rm HCon}({\rm I\Delta_0+\Omega_1})$ is consistent. So there is a model
$${\mathcal M}\models
({\rm I\Delta_0+\Omega_1})+\big(\alpha\!\in\!\ell\!{\it og}^2\wedge\theta(\alpha)\big)+{\rm HCon}({\rm I\Delta_0+\Omega_1}),$$ where
$\alpha\!\in\!{\mathcal M}$. We wish to show the consistency of  $({\rm I\Delta_0+\Omega_1})+\exists x\!\in\!\ell\!{\it og}^3\theta(x)$ by constructing another model $${\mathcal N}\models({\rm I\Delta_0+\Omega_1})+\exists x\!\in\!\ell\!{\it og}^3\theta(x).$$
If $\alpha$ is standard (i.e., $\alpha\!\in\!\mathbb{N}$) then one can take ${\mathcal N}={\mathcal M}$. But if $\alpha\!\in\!{\mathcal M}$ is non-standard, then we proceed as follows:
Take $\Lambda$ to be the following
set of terms: $\Lambda=\{\underline{j}\mid j\leqslant\omega_1(\alpha)\}\cup\{{\sf w}_j\mid j\leqslant\alpha\}$ in which the terms $\underline{j}$'s and ${\sf w}_j$'s are defined inductively as  $\underline{0}=0$,
$\underline{j+1}={\mathfrak s} \underline{j}$; and ${\sf w}_0=\underline{4}$, ${\sf
w}_{j+1}={\mathfrak w}({\sf w}_j)$. Here ${\mathfrak s}$ is the successor function, and  ${\mathfrak w}$ denotes the Skolem function symbol ${\mathfrak f}_{\exists y\left(y=\omega_1(x)\right)}$. Now
$\omega_2(\ulcorner\Lambda\urcorner)$ is of order (far less than) $2^{2^\alpha}$ which
exists  by the assumption ${\mathcal M}\models \alpha\!\in\!\ell\!{\it og}^2$.  Then by
Theorem \ref{log4} for a non-standard $j$ the set of terms
$\Lambda^{\langle j\rangle}$ has a code in ${\mathcal M}$. Thus the assumption ${\mathcal M}\models{\rm HCon}({\rm I\Delta_0+\Omega_1})$ implies that there must exists an $({\rm I\Delta_0+\Omega_1})-$evaluation $p$ on $\Lambda^{\langle j\rangle}$.  Then one can
form the model ${\mathcal N}={\textswab M}(\Lambda,p)$. Now ${\mathcal N}\models{\rm I\Delta_0+\Omega_1}$ by Lemma \ref{lm:mlambdap}, and also ${\mathcal N}\models \underline{\alpha}/p\!\in\!\ell\!{\it og}^3$ by the definition of ${\sf w}_\alpha$. Finally, ${\mathcal N}\models\theta(\underline{\alpha}/p)$ by Corollary \ref{last} (of Theorem \ref{bounded}).  Whence ${\mathcal N}$ is a model of the theory $({\rm I\Delta_0+\Omega_1})+\exists x\!\in\!\ell\!{\it og}^3\theta(x)$; and this finishes the proof of its consistency.
\end{proof}


\section{Herbrand Consistency of ${\rm I\Delta_0}$}
Our definition of Herbrand consistency is not best suited for ${\rm I\Delta_0}$: there are $\omega_1(\ulcorner\Lambda\urcorner)-$many evaluations on a given set of terms $\Lambda$. Though this may not seem a big problem in the first glance (one can change or modify the definition accordingly) but special care is needed for generalizing the results to the case of ${\rm I\Delta_0}$. In the first subsection we pinpoint the critical usages of ${\rm \Omega_1}$ and in the second subsection we tailor the definitions and theorems in a way that we can prove our main theorem for ${\rm I\Delta_0}$ finally.

\subsection{Essentiality of $\Omega_1$}
We made an essential use of $\Omega_1$ in the following parts of our arguments:

1- The totality of the $\omega_1$ function was needed for the upper bound of the code of  an evaluation on a given set of terms $\Lambda$. Namely, the code of any evaluation on $\Lambda$ is of order $\omega_1(\ulcorner\Lambda\urcorner)$, see Lemma \ref{codeomega1}.  And indeed there is no escape from this bound since, as it was explained after Lemma \ref{codeomega1}, there are $\exp(2|\Lambda|^2)$ evaluations on $\Lambda$, and if $|\Lambda|\approx\log\ulcorner\Lambda\urcorner$ then there could exist as many as $\omega_1(\ulcorner\Lambda\urcorner)^2$ evaluations on $\Lambda$. So, if $\Omega_1$ is not available, then there could be a large and non-standard  set of terms $\Gamma$ in a model ${\mathcal M}$ such that ${\mathcal M}$ cannot see all the evaluations on $\Gamma$. One of those evaluations could be a $T-$evaluation, that an end-extension of ${\mathcal M}$, say ${\mathcal K}$, can see. Then $\Gamma$ is a Herbrand proof of contradiction in ${\mathcal M}$ because in ${\mathcal M}$'s view there is no $T-$evaluation on $\Gamma$. But there could be indeed a very large $T-$evaluation on $\Gamma$ which ${\mathcal M}$ could not see, but ${\mathcal K}$ can. Thus the definition of ${\rm HCon}$ is deficient for ${\rm I\Delta_0}$ (where $\Omega_1$ is not there) and one cannot consider all the set of terms; those for which the $\omega_1$ of their codes exist, should be considered instead.

2- The second critical use of $\Omega_1$ was in the definition of ${\sf w}_j$'s for shrinking the (double-)logarithmic witness ${\mathcal M}\models\alpha\!\in\!\ell\!{\it og}^2$ to ${\mathcal N}\models{\sf w}_\alpha/p\!\in\!\ell\!{\it og}^3$. There we constructed the sequence $\langle {\sf w}_0,\ldots, {\sf w}_\alpha\rangle$ of terms such that ${\sf w}_0=\underline{4}$ and ${\sf w}_{j+1}={\mathfrak w}({\sf w}_j)$ where ${\mathfrak w}$ is the Skolem function symbol ${\mathfrak f}_{\exists y [y=\omega_1(x)]}$. And this was in our disposal because $\Omega_1=\forall x\exists y [y=\omega_1(x)]$ was one of the axioms (of ${\rm I\Delta_0+\Omega_1}$) and thus every $({\rm I\Delta_0+\Omega_1})-$evluation must have satisfied ${\mathfrak w}(t)=\omega_1(t)$.

Note that we also required $\Lambda$ to contain $\{\underline{j}\mid j\leqslant\omega_1(\alpha)\}$, but for this we  did not need the existence of $\omega_1(\alpha)$; it was guaranteed by the assumption ${\mathcal M}\models\alpha\!\in\!\ell\!{\it og}^2$.

\subsection{Tailoring for ${\rm I\Delta_0}$}
Here we introduce the necessary modifications on the above two points.

\subsubsection{The Definition of ${\rm HCon}^*$}

The first point can be dealt with by tailoring the definition of ${\rm HCon}$ for ${\rm I\Delta_0}$:

\begin{definition}{\rm
A theory $T$ is called Herbrand Consistent$^\ast$, denoted symbolically as ${\rm HCon}^*(T)$, when for all set of terms $\Lambda$, if  $\omega_1(\ulcorner\Lambda\urcorner)$ exists then  there is an $T-$evaluation on $\Lambda$.\hfill$\subset\!\!\!\!\supset$}
\end{definition}
This, obviously, can again be formalized in the language of arithmetic. The new definition cannot harm our arguments too much, because we needed ${\rm HCon}$ only for some special set of terms. And it was $\Lambda^{\langle j\rangle}$ for a non-standard $j$ where $\Lambda=\{\underline{j}\mid j\leqslant\omega_1(\alpha)\}\cup\{{\sf w}_j\mid j\leqslant\alpha\}$. For constructing the model ${\textswab M}(\Lambda,p)$ we already needed the existence of $\omega_2(\ulcorner\Lambda\urcorner)$ (see the beginning of the proof of Theorem \ref{O1H} before Lemma \ref{lemm1}).  Thus if we require the existence of $\omega_1(\ulcorner\Lambda\urcorner)$ in the definition of ${\rm HCon}^*$, then we will need the existence of $\omega_2(\ulcorner\Lambda\urcorner)$ later in the proof! Thus the first deficiency can be overcome.

\subsubsection{The Cuts ${\mathcal I}$ and ${\mathcal J}$}
In the absence of $\Omega_1$ we cannot define the above sequence
$\langle {\sf w}_0,\ldots, {\sf w}_\alpha\rangle$ satisfying ${\sf w}_{j+1}=\omega_1({{\sf w}_j})$. The most we can do inside ${\rm I\Delta_0}$ is to define a sequence like $\langle v_0,\ldots,v_\beta\rangle$ where $v_0=m$ and $v_{j+1}=(v_j)^n$ for some fixed $m,n\!\in\!\mathbb{N}$. Then $v_\beta=a^{n2^{\beta}}\leqslant{\mathcal P}(\exp^2(\beta))$. Thus we cannot get anything larger than $\exp^2$, and so for shortening a witness we should start from $\ell\!{\it og}$ and remain in the realm of $\ell\!{\it og}^2$. Indeed by the arguments of the beginning of the proof of Theorem \ref{O1H} before Lemma \ref{lemm1} we did not need the existence of $\exp^2(\alpha)$ for the existence of $\omega_2(\ulcorner\Lambda\urcorner)$. We needed only $\exp^2\big(4(\log\alpha)^4\big)$.  Thus it seems natural to consider the cut ${\mathcal I}=\{x\mid \exists y[y=\exp^2\big(4(\log\alpha)^4\big)]\}$ and its logarithm ${\mathcal J}=\{x\mid \exists y [y=\exp^2\big(4\alpha^4\big)]\}$.  We first note that Adamowicz's theorem (Theorem \ref{th2}) holds for ${\rm I\Delta_0}$ and any $n\!\in\!\mathbb{N}$; i.e., there exists a bounded formula whose $\ell\!{\it og}^n-$witness cannot {\em consistently} be shortened to $\ell\!{\it og}^{n+1}$. Indeed this theorem holds for any cut $I$ and its logarithm  which is definition the cut $J=\{x\mid \exists y[y=\exp(x)\wedge y\!\in\! I]\}$. The only relation between $\ell\!{\it og}^n$ and $\ell\!{\it og}^{n+1}$  needed in the proof of Theorem \ref{th1} is that $2^x\!\in\!\ell\!{\it og}^n \iff x\!\in\!\ell\!{\it og}^{n+1}$; see \cite{Ada02}. And the proof works for any cut $I$ and $J$ which satisfy $\forall x (2^x\!\in\! I\iff x\!\in\! J)$. The cuts ${\mathcal I}$ and ${\mathcal J}$ defined above satisfy this
 as well ($\exp(x)\!\in\!{\mathcal I}\iff x\!\in\!{\mathcal J}$). So, we repeat Theorem \ref{th1} as:
\begin{theorem}[\cite{Ada02}]\label{ij1}
There exists a bounded formula $\eta(\overline{x})$ such that the theory {${\rm I\Delta_0} \,+\, \exists \overline{x}\!\in\!{\mathcal I}\eta(\overline{x})$} is consistent, but  {${\rm I\Delta_0} \,+\, \exists \overline{x}\!\in\!{\mathcal J}\eta(\overline{x})$} is not consistent.\hfill $\subset\!\!\!\!\supset$
\end{theorem}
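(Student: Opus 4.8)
The plan is to obtain Theorem~\ref{ij1} by reading off Adamowicz's construction from \cite{Ada02} and replacing the cut pair $(\ell\!{\it og}^{n},\ell\!{\it og}^{n+1})$ throughout by $({\mathcal I},{\mathcal J})$. Recall that in \cite{Ada02} the separating bounded formula $\eta$ is not a combinatorial accident but is produced by a fixed-point (self-referential) construction: $\eta(\overline{x})$ is arranged so that a witness $\overline{a}$ for $\eta$ lying below a cut $C$ codes --- after the logarithmic shrinking of \cite{Ada02} --- a Herbrand refutation of ${\rm I\Delta_0}$ whose size is governed by how deep $\overline{a}$ sits inside $C$. Along the narrower cut this refutation is short enough to be carried out inside the model, so that ${\rm I\Delta_0}\vdash\forall\overline{x}\!\in\!{\mathcal J}\,\neg\eta(\overline{x})$ and hence ${\rm I\Delta_0}+\exists\overline{x}\!\in\!{\mathcal J}\,\eta(\overline{x})$ is inconsistent; along the wider cut the same refutation is one logarithm too long to be forced into existence, no contradiction ensues, and ${\rm I\Delta_0}+\exists\overline{x}\!\in\!{\mathcal I}\,\eta(\overline{x})$ has a model. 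The only structural input the argument draws from the cut pair is the logarithm relation $\forall x(2^{x}\!\in\! I\iff x\!\in\! J)$, used exactly where \cite{Ada02} uses $\forall x(2^{x}\!\in\!\ell\!{\it og}^{n}\iff x\!\in\!\ell\!{\it og}^{n+1})$.

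Concretely I would proceed in three steps. First, quote the construction of $\eta$ from \cite{Ada02} and list every property of the cut pair that its proof consumes; these reduce to: the two cuts are ${\rm I\Delta_0}$-provably proper initial segments closed under the polynomial and arithmetic operations used in the G\"odel coding of Herbrand proofs, and they satisfy $2^{x}\!\in\! I\iff x\!\in\! J$. Second, check that $({\mathcal I},{\mathcal J})$ meets this list. The logarithm relation is immediate: since $\log\exp(x)=x$ we get $\exp^{2}\big(4(\log\exp(x))^{4}\big)=\exp^{2}(4x^{4})$, so $\exp(x)\!\in\!{\mathcal I}$ iff $x\!\in\!{\mathcal J}$. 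That ${\mathcal I}$ and ${\mathcal J}$ are nonempty and downward closed is clear; closure under successor, addition and multiplication is the only slightly delicate item, and if the crude estimates do not close up on the nose it is repaired by the standard device of passing to a sub-cut of ${\mathcal I}$ (with its logarithm in place of ${\mathcal J}$) that is provably closed under $+$ and $\cdot$ while still obeying $2^{x}\!\in\! I\iff x\!\in\! J$. Third, re-run the proof of \cite{Ada02} verbatim with ${\mathcal I}$ and ${\mathcal J}$ in the roles of $\ell\!{\it og}^{n}$ and $\ell\!{\it og}^{n+1}$.

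The hard part is expected to be bookkeeping rather than a new idea: one must walk through \cite{Ada02} and verify that no special feature of $\ell\!{\it og}^{n}$ is used beyond the short list above --- in particular that all the size bounds in the shrinking argument survive the substitution, and that the cut-shortening used to make ${\mathcal I},{\mathcal J}$ honestly closed under the arithmetic operations disturbs neither the logarithm relation nor the consistency of ${\rm I\Delta_0}+\exists\overline{x}\!\in\!{\mathcal I}\,\eta(\overline{x})$. Once this is checked, Theorem~\ref{ij1} follows, supplying the bounded formula whose ${\mathcal I}$-witness cannot be consistently shrunk into ${\mathcal J}$ that the ${\rm I\Delta_0}$ version of the main theorem will require.
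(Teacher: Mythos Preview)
Your plan is essentially the paper's own argument: the paper does not give an independent proof of Theorem~\ref{ij1} but simply remarks, in the paragraph preceding it, that the proof of Theorem~\ref{th2} in \cite{Ada02} uses only the relation $2^x\!\in\! I\iff x\!\in\! J$ between the two cuts, verifies that $({\mathcal I},{\mathcal J})$ satisfies this (via $\exp(x)\!\in\!{\mathcal I}\iff x\!\in\!{\mathcal J}$), and then restates the theorem for these cuts. Your three-step outline is a more explicit and cautious version of exactly this; the only caveat is that your narrative about $\eta$ encoding a ``Herbrand refutation of ${\rm I\Delta_0}$'' is more specific than anything the paper commits to---Theorem~\ref{th2}/\ref{ij1} is a pure cut-separation statement independent of Herbrand consistency---so when you carry out the bookkeeping you should track the actual construction in \cite{Ada02} rather than that gloss.
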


\subsubsection{The Main Theorem for ${\rm I\Delta_0}$}

Let us note that the following theorem together with Theorem \ref{ij1} prove that ${\rm I\Delta_0}\not\vdash{\rm HCon}^*({\rm I\Delta_0})$.

\begin{theorem}\label{ij2}
For any bounded formula $\theta(x)$, if the theory {${\rm I\Delta_0} \,+\, \exists
x\!\in\!{\mathcal I}\theta(x) \,+\, {\rm HCon}^*({\rm I\Delta_0})$} is consistent then so is  the theory {${\rm I\Delta_0} \,+\, \exists
x\!\!\in\!\!{\mathcal J}\theta(x)$.}
\end{theorem}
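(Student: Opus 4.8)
The plan is to re-run the proof of Theorem \ref{O1H} with only three changes: replace the cuts $\ell\!{\it og}^2,\ell\!{\it og}^3$ by ${\mathcal I},{\mathcal J}$, replace ${\rm HCon}$ by ${\rm HCon}^*$, and — the one genuinely new device — replace the $\Omega_1$-Skolem function ${\mathfrak w}$ for $\omega_1$ (which is unavailable in ${\rm I\Delta_0}$) by the squaring Skolem function ${\mathfrak q}$ of Example \ref{exampleB}. Since ${\mathfrak q}$ is the Skolem function of a subformula of an induction axiom of ${\rm I\Delta_0}$, every ${\rm I\Delta_0}$-evaluation is forced to read ${\mathfrak q}$ as $t\mapsto t\cdot t$, and — crucially — ${\mathfrak q}$ is \emph{unary}, so its iterates are compact terms.

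So suppose ${\mathcal M}\models{\rm I\Delta_0}+\big(\alpha\!\in\!{\mathcal I}\wedge\theta(\alpha)\big)+{\rm HCon}^*({\rm I\Delta_0})$; if $\alpha$ is standard take ${\mathcal N}={\mathcal M}$, so assume $\alpha$ is non-standard. I would put $\beta=4\alpha^{4}$ (note $4\alpha^{4}\leqslant\omega_1(\alpha)$, so $\underline{\beta}$ is among the $\underline{j}$ listed below), set $v_0=\underline{2}$ and $v_{i+1}={\mathfrak q}(v_i)$ for $i<\beta$, and take
$$\Lambda=\{\underline{j}\mid j\leqslant\omega_1(\alpha)\}\cup\{v_0,v_1,\ldots,v_\beta\}.$$
Because ${\mathfrak q}$ is unary, each $v_i={\mathfrak q}^{\,i}(\underline{2})$ has only ${\mathcal O}(i)$ symbols, so the $v_i$'s are negligible and $\ulcorner\Lambda\urcorner\leqslant{\mathcal P}\big(\exp(\omega_1(\alpha)^2)\big)$ exactly as in Theorem \ref{O1H}; hence $\omega_2(\ulcorner\Lambda\urcorner)\leqslant{\mathcal P}\big(\exp^2(4(\log\alpha)^4)\big)$, which exists in ${\mathcal M}$ \emph{precisely because} $\alpha\!\in\!{\mathcal I}$ — this is the point of cutting ${\mathcal I}$ at exactly that level, since we can no longer fall back on $\exp^2(\alpha)$ as in the $\Omega_1$ case. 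By Theorem \ref{log4} there is a non-standard $j$ with $\ulcorner\Lambda^{\langle j\rangle}\urcorner\leqslant{\mathcal P}(\omega_2(\ulcorner\Lambda\urcorner))$ coded in ${\mathcal M}$, and by the bookkeeping remarked on after the definition of ${\rm HCon}^*$ the value $\omega_1(\ulcorner\Lambda^{\langle j\rangle}\urcorner)$ still exists in ${\mathcal M}$; so ${\rm HCon}^*({\rm I\Delta_0})$ delivers an ${\rm I\Delta_0}$-evaluation $p$ on $\Lambda^{\langle j\rangle}$, and ${\mathcal N}:={\textswab M}(\Lambda,p)$ is a model of ${\rm I\Delta_0}$ by Lemma \ref{lm:mlambdap}.

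It remains to verify ${\mathcal N}\models\theta(\underline{\alpha}/p)\wedge\underline{\alpha}/p\!\in\!{\mathcal J}$. For the first conjunct I would note that Lemmas \ref{lemm1}--\ref{lem6} and Theorems \ref{open},\ref{bounded} use only that $p$ is an ${\rm I\Delta_0}$-evaluation (on $\Lambda^{\langle\infty\rangle}$, equivalently on $\Lambda^{\langle j\rangle}$ for the non-standard $j$) and that $\Lambda\supseteq\{\underline{j}\mid j\leqslant\omega_1(\alpha)\}$; both hold, so Corollary \ref{last} applies unchanged and gives ${\mathcal N}\models\theta(\underline{\alpha}/p)$. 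For the second conjunct, $p$ satisfies, by Example \ref{exampleB}, the Skolem instances forcing ${\mathfrak q}(v_i)=v_i\cdot v_i$ that are available in $\Lambda^{\langle j\rangle}$; hence in ${\mathcal N}$ we have $v_0/p=2$ and $v_{i+1}/p=(v_i/p)^2$ for every $i<\beta$, so the chain $v_0/p,\ldots,v_\beta/p$ exhibits $v_\beta/p=\exp^2(\underline{\beta}/p)$, and $\underline{\beta}/p=4(\underline{\alpha}/p)^4$ by Corollary \ref{cor1}. Thus $\exp^2\big(4(\underline{\alpha}/p)^4\big)$ exists in ${\mathcal N}$, i.e.\ ${\mathcal N}\models\underline{\alpha}/p\!\in\!{\mathcal J}$, and ${\mathcal N}$ witnesses the consistency of ${\rm I\Delta_0}+\exists x\!\in\!{\mathcal J}\,\theta(x)$.

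The two steps I expect to carry the weight are: (i) the witness-shortening with no $\Omega_1$ at hand — since ${\mathfrak q}$ only doubles, the tower $\langle v_i/p\rangle_{i\leqslant\beta}$ tops out at $\exp^2$, which is precisely why a witness can be moved only from the $\ell\!{\it og}$-level cut ${\mathcal I}$ down to the $\ell\!{\it og}^2$-level cut ${\mathcal J}$ rather than from $\ell\!{\it og}^2$ to $\ell\!{\it og}^3$; and one must still check, as in the corresponding step of Theorem \ref{O1H}, that this $\exp^2$-tower is genuinely coded inside ${\mathcal N}$; and (ii) the simultaneous size accounting — the single hypothesis $\alpha\!\in\!{\mathcal I}$ must yield \emph{both} the existence of $\omega_2(\ulcorner\Lambda\urcorner)$ (to run Theorem \ref{log4}) \emph{and} the existence of $\omega_1(\ulcorner\Lambda^{\langle j\rangle}\urcorner)$ (to invoke ${\rm HCon}^*$), and it is this double demand that dictates the exact form of the cut ${\mathcal I}$.
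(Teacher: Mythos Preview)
Your proposal is correct and follows essentially the same approach as the paper's own proof: the same squaring Skolem function ${\mathfrak q}$ of Example~\ref{exampleB} iterated $4\alpha^4$ times from $\underline{2}$, the same set $\Lambda$ (the paper also throws in the finite standard set $\Upsilon$ of Example~\ref{exampleB} explicitly, but as you implicitly rely on, $\Upsilon\subseteq\Lambda^{\langle j\rangle}$ for non-standard $j$ anyway), the same size estimate $\omega_2(\ulcorner\Lambda\urcorner)\leqslant{\mathcal P}\big(\exp^2(4(\log\alpha)^4)\big)$ pinned exactly to the definition of ${\mathcal I}$, and the same appeals to Theorem~\ref{log4}, Lemma~\ref{lm:mlambdap}, Example~\ref{exampleB}, and Corollary~\ref{last}. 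Your commentary on points (i) and (ii) at the end matches the paper's explanation of why the cuts ${\mathcal I},{\mathcal J}$ are chosen as they are.
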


\begin{proof} Suppose the theory ${\rm I\Delta_0}+\exists x\!\in\!{\mathcal I}\theta(x)+{\rm HCon}^*({\rm I\Delta_0})$ is consistent. So there exists a model
$${\mathcal M}\models
{\rm I\Delta_0}+\big(\alpha\!\in\!{\mathcal I}\wedge\theta(\alpha)\big)+{\rm HCon}^*({\rm I\Delta_0}),$$ where
$\alpha\!\in\!{\mathcal M}$. We will show the consistency of  ${\rm I\Delta_0}+\exists x\!\in\!{\mathcal J}\theta(x)$ by constructing another model $${\mathcal N}\models{\rm I\Delta_0}+\exists x\!\in\!{\mathcal J}\theta(x).$$
If $\alpha$ is standard (i.e., $\alpha\!\!\in\!\!\mathbb{N}$) then one can take ${\mathcal N}={\mathcal M}$. But if $\alpha\!\!\in\!\!{\mathcal M}$ is non-standard, then we proceed as follows:

 Let $\Upsilon=\{0,  0+0,  0^2, {\mathfrak c}, {\mathfrak c}^2, {\mathfrak c}^2+0, {\mathfrak s}{\mathfrak c}, {\mathfrak q}{\mathfrak c},  ({\mathfrak s}{\mathfrak c})^2,  ({\mathfrak s}{\mathfrak c})^2+0\}$ where ${\mathfrak q}$ is the Skolem function symbol for the formula ${\exists y(y\leqslant x^2\wedge y=x^2)}$ and  ${\mathfrak c}$ is the Skolem constant symbol for the  sentence (see Example \ref{exampleB})
 \newline\centerline{${\exists x\big(\exists w (w\leqslant x^2\wedge w=x^2)\wedge\forall v (v\not\leqslant({\mathfrak s} x)^2\wedge v\not=({\mathfrak s} x)^2)\big)}$.} We can use the argument of Example \ref{exampleB}, since for the bounded formula  $\psi(x)=\exists y\leqslant x^2(y=x\cdot x)$,  the sentence $\textrm{ind}_\psi$ is an axiom of the theory ${\rm I\Delta_0}$.
Take
{$\Lambda=\Upsilon\cup\{\underline{j}\mid j\leqslant\omega_1(\alpha)\}\cup\{{\sf z}_j\mid j\leqslant4\alpha^4\}$} in which the terms $\underline{j}$'s and ${\sf z}_j$'s are defined inductively as  $\underline{0}=0$,
$\underline{j+1}={\mathfrak s} \underline{j}$; and ${\sf z}_0=\underline{2}$, ${\sf
z}_{j+1}={\mathfrak q}({\sf z}_j)$.   
Now
$\omega_2(\ulcorner\Lambda\urcorner)$ is of order  $\exp^2\big(4(\log\alpha)^4\big)$ which
exists  by the assumption ${\mathcal M}\models \alpha\!\in\!{\mathcal I}$.   Then by
Theorem \ref{log4} for a non-standard $j$ the set of terms
$\Lambda^{\langle j\rangle}$ has a code in ${\mathcal M}$. Thus the assumption ${\mathcal M}\models{\rm HCon}^*({\rm I\Delta_0})$ implies that there must exists an ${\rm I\Delta_0}-$evaluation $p$ on $\Lambda^{\langle j\rangle}$.  Then one can
form the model ${\mathcal N}={\textswab M}(\Lambda,p)$. Now ${\mathcal N}\models{\rm I\Delta_0}$ by Lemma~\ref{lm:mlambdap}, and also ${\mathcal N}\models \underline{\alpha}/p\!\in\!{\mathcal J}$ by the definition of ${\sf z}_{4\alpha^4}$ (which represents $\exp^2(4\alpha^4)$). Note that $p\models {\sf z}_{j+1}={\sf z}_j\cdot {\sf z}_j$ by the argument of Example~\ref{exampleB}, and also the code of the sequence $\langle {\sf z}_0,\ldots,{\sf z}_{4\alpha^4}\rangle$ is of order $\exp\big({(4\alpha^4)^2}\big)\leqslant\exp^2(4(\log\alpha)^4)$ which exists since $\alpha\!\in\!{\mathcal I}$. Finally, ${\mathcal N}\models\theta(\underline{\alpha}/p)$ by Corollary \ref{last} (of Theorem \ref{bounded}).  Whence ${\mathcal N}$ is a model of the theory ${\rm I\Delta_0}+\exists x\!\in\!{\mathcal J}\theta(x)$; what proves its consistency.
\end{proof}




\section{Conclusions}
An important property of Herbrand consistency of the theories ${\rm I\Delta_0+\Omega_1}$ and ${\rm I\Delta_0}$ has been proved. That property immediately implies G\"odel's second incompleteness theorem for the notion of Herbrand consistency in those theories. However, this version of G\"odel's theorem has come a long way. The original presumption of Paris \& Wilkie \cite{PaWi81} asked for a proof of ${\rm I\Delta_0}\not\vdash{\rm CFCon}({\rm I\Delta_0})$, without specifying any  variant of Cut-Free Consistency ${\rm CFCon}$: ``Presumably ${\rm I\Delta_0}\not\vdash{\rm CFCon}({\rm I\Delta_0})$ although we do not know this at present". Willard \cite{Wil02} solved this problem for the Tableau Consistency variant. Pudl\'ak \cite{Pud85} asked a more specific question: ``we know only that $T\not\vdash{\rm HCon}(T)$ for $T$ containing at least ${\rm I\Delta_0}+{\rm Exp}$, for weaker theories it is an open problem".   In \cite{Sal02} this problem was studied for the theories ${\rm I\Delta_0+\Omega_1}$ and ${\rm I\Delta_0}$ (and a theory in between these two, namely ${\rm I\Delta_0}$ plus the totality of the $x\mapsto x^{\log^2 x}$ function). The proof of ${\rm I\Delta_0+\Omega_1}\not\vdash{\rm HCon}({\rm I\Delta_0+\Omega_1})$ given here was presented for the first time in Chapter 5 of \cite{Sal02}. But the unprovability of ${\rm HCon}({\rm I\Delta_0})$ in ${\rm I\Delta_0}$ was not as easy as it would have seemed. In Chapter 3 of \cite{Sal02} this unprovability was proved for a re-axiomatization of ${\rm I\Delta_0}$.

 Our reason for using the induction formula $\textrm{ind}_\psi$, where $\psi(x)$ is
 the bounded formula $\exists y\leqslant x^2(y=x^2)$, was
 having an Skolem function symbol for squaring ${\mathfrak q}(x)=x^2$.
This way the G\"odel code of ${\mathfrak q}(x)$ is $M\cdot \ulcorner x\urcorner$ for a fixed $M\!\in\!\mathbb{N}$, and thus the code of ${\mathfrak q}^n(x)$ is $M^n\cdot \ulcorner x\urcorner$ which is of order $\exp(n)$. So, we could code a term representing the number $x^{\exp(n)}$ (=${\mathfrak q}^n(x)$) by a number of order $\exp(n)$. But if we coded the number $x^{\exp(n)}$ directly, that would be the code of ${x\cdot x\cdot \ldots \cdot x}$ (with $2^n-\text{times} \ x$) which is of order $(\ulcorner x\urcorner)^{2^n}$ or $\exp^2(n)$.  In that case, the code of the sequence $\langle {\sf z}_0,\ldots,{\sf z}_{4\alpha^4}\rangle$ would be of order $\exp^2((4\alpha^4)^2)$, but we used the order $\exp\big({(4\alpha^4)^2}\big)$ in the proof of Theorem \ref{ij2} (since we had at most $\exp^2(4(\log\alpha)^4)$ in our disposal  - which is far less than $\exp^2((4\alpha^4)^2)$). That way, we avoided accepting the totality of the squaring function ${\rm \Omega_0}: \forall x\exists y (y=x\cdot x)$ as an (additional) axiom.

This point deserves another look: define the terms $\{{\sf z}_i\}$, $\{{\sf u}_i\}$, and $\{{\sf v}_i\}$ inductively as ${\sf z}_0=2$, ${\sf z}_{j+1}={\mathfrak q}({\sf z}_j)$; ${\sf u}_0=2$, ${\sf u}_{j+1}=({\sf z}_j)^2$; and ${\sf v}_0=2$, ${\sf v}_{j+1}=({\sf v}_j)^2$. Then the codes of the terms ${\sf z}_n$'s and ${\sf v}_n$'s are of order $\mathcal{P}(2^n)$, but the code of ${\sf v}_n$'s are of order $\mathcal{P}(2^{2^n})$. On the other hand, the terms ${\sf z}_i$, ${\sf u}_i$ and ${\sf v}_i$ have the same value ($2^{2^i}$) in any model of ${\rm I\Delta_0}$. In fact, for $i\leqslant\omega_1(\alpha)$ we have ${\sf z}_i\!\in\!\Lambda$ and also ${\sf u}_i\!\in\!\Lambda^{\langle 1\rangle}$; but ${\sf v}_i$'s are too big to fit in small sets of terms.

\bigskip

 Our treatment of G\"odel's second incompleteness theorem for Herbrand consistency in weak arithmetics,  can be summarized in the following improvements to the classical treatments (cf. the first paragraph of Appendix E in \cite{Wil07}):

 \noindent (1) For Skolemizing a formula we did not transform it to a prenex normal form. This allowed a more efficient Skolemization and Herbrandization of formulas.

\noindent  (2) Propositional satisfiability was achieved by evaluations, which are partial (Herbrand) models; see also \cite{Ada01,Ada02,AZ07,Kol06,Sal01,Sal02}.

 \noindent (3) For logarithmic shortening of bounded witnesses in ${\rm I\Delta_0}$, we could not go from $\ell\!{\it og}$ to $\ell\!{\it og}^2$ directly. Instead we used the condition $\omega_1^2(x)^4\!\in\!\ell\!{\it og}$ (equivalently $x\!\in\!{\mathcal I}$) to get to $4x^4\!\in\!\ell\!{\it og}^2$ (equivalently $x\!\in\!{\mathcal J}$). For that we used the improved version of Adamowicz's theorem \cite{Ada02} (Theorem \ref{ij1}).

 \noindent (4) And finally, we used the trick of $\textrm{ind}_\psi$ to get an Skolem function symbol for the squaring function. Ideally, one would not use any induction axiom for proving a formula like ${\rm \Omega_0}: \ \forall x\exists y (y=x^2)$. This is an ${\rm Q}-$derivable sentence, and adding it as an axiom seems much more natural than proving it by an inductive argument.
  But, fortunately, there was a way of avoiding the acceptance of ${\rm \Omega_0}$ as an axiom, and that was proving its $\Pi_1-$equivalent $\forall x\exists y\leqslant x^2(y=x^2)$ by induction on its bounded part $\exists y\leqslant x^2(y=x^2)$ (see Example \ref{exampleB} and the proof of Theorem~\ref{ij2}). That induction axiom could give us a free Skolem function symbol for the squaring operation, provided that we did not prenex normalize the induction axiom, and instead Skolemize it more effectively $-$ see point (1) above. Prenex normalizing and then Skolemizing the induction axioms can be so cumbersome that many would prefer avoiding them, but accepting new axioms instead! Trying to prenex normalize the induction axiom $\textrm{ind}_\psi$ for $\psi=\exists y\leqslant x^2(y=x^2)$ in Example \ref{exampleB} can give a hint for its difficulty.

\bigskip

\noindent In the end, we conjecture that by using our coding techniques and definitions of Herbrand consistency,  the results of  L. A. Ko{\l}odziejczyk \cite{Kol06}   can be generalized for showing the following unprovability:

\begin{conjecture}{\rm
%
$ \bigcup_n({\rm I\Delta_0}+{\rm \Omega_n})\not\vdash{\rm HCon}^*({\rm I\Delta_0})$.
}\end{conjecture}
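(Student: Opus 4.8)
The plan is to argue by contradiction and reduce the statement to a single new ingredient, in the spirit of Ko{\l}odziejczyk \cite{Kol06}. Suppose $\bigcup_n({\rm I\Delta_0}+{\rm \Omega_n})\vdash{\rm HCon}^*({\rm I\Delta_0})$. A derivation uses only finitely many axioms, so ${\rm I\Delta_0}+{\rm \Omega_N}\vdash{\rm HCon}^*({\rm I\Delta_0})$ for some standard $N$. The first step is to record the following consequence of Theorem \ref{ij2}: \emph{for every bounded $\theta(x)$, if $({\rm I\Delta_0}+{\rm \Omega_N})+\exists x\!\in\!{\mathcal I}\,\theta(x)$ is consistent, then ${\rm I\Delta_0}+\exists x\!\in\!{\mathcal J}\,\theta(x)$ is consistent.} Indeed, a model ${\mathcal M}$ of the former satisfies ${\rm HCon}^*({\rm I\Delta_0})$, since it is a model of ${\rm I\Delta_0}+{\rm \Omega_N}$; hence ${\mathcal M}\models{\rm I\Delta_0}+\exists x\!\in\!{\mathcal I}\,\theta(x)+{\rm HCon}^*({\rm I\Delta_0})$, and Theorem \ref{ij2} applies to ${\mathcal M}$ verbatim. (The Herbrand-model construction behind Theorem \ref{ij2} only ever uses that $p$ is an ${\rm I\Delta_0}$-evaluation, so the extra axiom ${\rm \Omega_N}$ in ${\mathcal M}$ is harmless; note this also means one cannot hope to iterate the step, since the model it produces satisfies only ${\rm I\Delta_0}$, not even ${\rm \Omega_1}$.)

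The substance of the proof is then the following \emph{asymmetric separation}, which I would extract by revisiting Adamowicz's diagonal construction: \emph{there is a bounded formula $\eta(\overline{x})$ such that $({\rm I\Delta_0}+{\rm \Omega_N})+\exists\overline{x}\!\in\!{\mathcal I}\,\eta(\overline{x})$ is consistent while ${\rm I\Delta_0}+\exists\overline{x}\!\in\!{\mathcal J}\,\eta(\overline{x})$ is inconsistent.} Its proof would start from Theorem \ref{th2} with $m=N$, which furnishes a bounded $\eta$ for which $({\rm I\Delta_0}+{\rm \Omega_N})+\exists\overline{x}\!\in\!\ell\!{\it og}^{k}\eta$ is consistent but $({\rm I\Delta_0}+{\rm \Omega_N})+\exists\overline{x}\!\in\!\ell\!{\it og}^{k+1}\eta$ is not (for a suitable standard $k$); transporting this from the pair $(\ell\!{\it og}^{k},\ell\!{\it og}^{k+1})$ to the pair $({\mathcal I},{\mathcal J})$ is legitimate, exactly as in the discussion preceding Theorem \ref{ij1}, since the only feature of the two cuts used there is $2^x\!\in\!{\mathcal I}\iff x\!\in\!{\mathcal J}$. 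This already yields the consistency clause with ${\rm \Omega_N}$ available. What remains --- and this is the real work --- is to \emph{sharpen the inconsistency clause from} ${\rm I\Delta_0}+{\rm \Omega_N}$ \emph{down to} ${\rm I\Delta_0}$: one inspects that a witness $\overline{x}\!\in\!{\mathcal J}$ for $\eta$ is (a code of) a short Herbrand proof of $0\!\neq\!0$ from an appropriate cut-truncation, together with the auxiliary data that makes it short, and that --- $\overline{x}$ lying in ${\mathcal J}$, where ${\rm \omega_2}$ and more exist --- decoding this object and checking that it is indeed such a proof is a bounded task already carried out in ${\rm I\Delta_0}$. Thus the totality of ${\rm \omega_N}$ is used only on the consistency side, to run Adamowicz's model construction for the $\ell\!{\it og}^{k}$-case; it is not needed to refute the compressed case. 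This asymmetric bookkeeping is precisely the analogue, inside the ${\rm HCon}^*$ framework of this paper, of what lets Ko{\l}odziejczyk separate a fixed weak fragment from $\bigcup_m S_m$ in \cite{Kol06}.

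Granting $\eta$, the argument closes: $({\rm I\Delta_0}+{\rm \Omega_N})+\exists\overline{x}\!\in\!{\mathcal I}\,\eta$ is consistent, so by the consequence of Theorem \ref{ij2} recorded in the first step, ${\rm I\Delta_0}+\exists\overline{x}\!\in\!{\mathcal J}\,\eta$ is consistent; but by the asymmetric separation it is inconsistent --- a contradiction. Therefore $\bigcup_n({\rm I\Delta_0}+{\rm \Omega_n})\not\vdash{\rm HCon}^*({\rm I\Delta_0})$.

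The hard part is, as indicated, the sharpening of the inconsistency clause: one has to enter the self-referential construction of $\eta$ and its relativization to the cuts ${\mathcal I},{\mathcal J}$, and verify --- uniformly in $N$ --- that refuting a ${\mathcal J}$-sized witness requires no more than ${\rm I\Delta_0}$, whereas the consistency of an ${\mathcal I}$-sized witness still survives adjoining ${\rm \Omega_N}$; in other words, that the diagonalization of \cite{Ada02} can be arranged so that its ``soundness'' half is provable in the weak theory. A secondary point is to confirm that the ${\rm HCon}^*$ predicate of this paper, with its guard ``$\omega_1(\ulcorner\Lambda\urcorner)$ exists'', is compatible with whatever truncated Herbrand-provability predicate occurs inside $\eta$; since everything downstream only demands ${\rm I\Delta_0}$-evaluations on term sets for which ${\rm \omega_2}$ already exists, this should be a formality.
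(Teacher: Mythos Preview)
The statement you are attempting to prove is listed in the paper as a \emph{Conjecture}, not a theorem: the paper offers no proof, only the remark that ``by using our coding techniques and definitions of Herbrand consistency, the results of L.~A.~Ko{\l}odziejczyk \cite{Kol06} can be generalized'' to obtain it. So there is no ``paper's own proof'' to compare your attempt against; you are proposing a line of attack on an open problem.

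As a research outline your proposal is reasonable and is exactly in the spirit the paper suggests. The compactness reduction to a fixed $N$ and the use of Theorem~\ref{ij2} as a black box are correct: if ${\rm I\Delta_0}+{\rm \Omega_N}\vdash{\rm HCon}^*({\rm I\Delta_0})$, then any model of $({\rm I\Delta_0}+{\rm \Omega_N})+\exists x\!\in\!{\mathcal I}\,\theta(x)$ already satisfies the hypothesis of Theorem~\ref{ij2}, and the construction there only uses ${\rm I\Delta_0}$-evaluations. You also correctly isolate the genuine obstacle, namely the \emph{asymmetric} version of Theorem~\ref{ij1}: a bounded $\eta$ with $({\rm I\Delta_0}+{\rm \Omega_N})+\exists\overline{x}\!\in\!{\mathcal I}\,\eta$ consistent but ${\rm I\Delta_0}+\exists\overline{x}\!\in\!{\mathcal J}\,\eta$ inconsistent.

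That said, what you have written is a plan, not a proof. The ``sharpening of the inconsistency clause'' is the entire content of the problem, and you only gesture at it. Two concrete cautions. First, the direction of the sharpening is the hard one: inconsistency over the stronger theory ${\rm I\Delta_0}+{\rm \Omega_N}$ does not automatically descend to ${\rm I\Delta_0}$, so you really must re-enter Adamowicz's diagonal construction and arrange that the refutation of a ${\mathcal J}$-sized witness uses no ${\rm \Omega}$-axioms; this is precisely the delicate bookkeeping in \cite{Kol06}, and it is not a formality. Second, the cuts ${\mathcal I},{\mathcal J}$ were engineered in the paper for the ${\rm I\Delta_0}$ construction (they encode the $\omega_2$-bound needed for $\Lambda^{\langle j\rangle}$); when you lift the consistency clause to ${\rm I\Delta_0}+{\rm \Omega_N}$ via Theorem~\ref{th2} you must check that Adamowicz's argument for arbitrary cut pairs $(I,J)$ with $2^x\!\in\!I\Leftrightarrow x\!\in\!J$ still goes through with the ambient theory taken to be ${\rm I\Delta_0}+{\rm \Omega_N}$ and this specific pair --- the paper only asserts this for the base theory ${\rm I\Delta_0}$. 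Neither point is obviously fatal, but neither is handled in your sketch; until they are, the conjecture remains open.
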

\begin{question}{\rm
Can a {\sc Book} proof (in the words of Paul Erd\"{o}s) be given for G\"odel's second incompleteness theorem $T\not\vdash\mathcal{H}\textswab{C}\textswab{o}\textswab{n}(T)$ for any theory $T\supseteq {\rm Q}$ and a canonical  definition of Herbrand consistency $\mathcal{H}\textswab{C}\textswab{o}\textswab{n}$?
}\end{question}


\subsubsection*{Acknowledgements} This research was partially supported by the grant
${\sf N}^{\underline{\sf o}}\,{86030011}$ of the
Institute for Studies in Theoretical Physics and Mathematics
$\bigcirc\!\!\!\!/\!\!\!\bullet\!\!\!/\!\!\!\!\bigcirc$ $\mathbb{I}\mathbb{P}\mathbb{M}$,
Niavaran, Tehran, Iran.


\bigskip

\end{document}